\let\OLDthebibliography\thebibliography
\renewcommand\thebibliography[1]{
  \OLDthebibliography{#1}
  \setlength{\parskip}{3pt}
  \setlength{\itemsep}{0pt plus 0.3ex}
}
\DeclareMathOperator{\Var}{Var}
\newcommand{\<}{\langle}
\renewcommand{\>}{\rangle}
\newcommand{\N}{\mathbb{N}}
\newcommand{\Z}{\mathbb{Z}}
\newcommand{\R}{\mathbb{R}}
\newcommand{\F}{\mathcal{F}}
\newcommand{\T}{\mathbb{T}}
\newcommand{\lip}{\text{\rm Lip}}
\renewcommand{\P}{\mathrm{P}}
\newcommand{\E}{\mathrm{E}}
\newcommand{\bcA}{\bm{\mathcal{A}}}
\newcommand{\1}{\mathbb{1}}
\renewcommand{\d}{{\rm d}}
\newcommand{\e}{{\rm e}}
\renewcommand{\geq}{\geqslant}
\renewcommand{\leq}{\leqslant}
\renewcommand{\ge}{\geqslant}
\renewcommand{\le}{\leqslant}
\author{Davar Khoshnevisan\\University of Utah
	\and Kunwoo Kim\\POSTECH
	\and Carl Mueller\\University of Rochester}
\title{\bf Dissipation in Parabolic SPDEs II: \\Oscillation and decay of the solution\thanks{
	Research supported in part by the NSF grant DMS-1855439 [D.K.], 
NRF grants  2019R1A5A1028324 and 2020R1A2C4002077 [K.K], and Simons Foundation grant 513424 [C.M.].  
}}
\date{January 24, 2022}%{January  6, 2022}%{August 23, 2021}
\newtheorem{stat}{Statement}[section]
\newtheorem{proposition}[stat]{Proposition}
\newtheorem{corollary}[stat]{Corollary}
\newtheorem{theorem}[stat]{Theorem}
\newtheorem{lemma}[stat]{Lemma}
\theoremstyle{definition}
\newtheorem{remark}[stat]{Remark}
\newtheorem*{OP}{Open Problem}
\numberwithin{equation}{section}
\begin{document}
\maketitle
\begin{abstract} 
	We consider a stochastic heat equation of the type,
	$\partial_t u = \partial^2_x u + \sigma(u)\dot{W}$ on $(0\,,\infty)\times[-1\,,1]$
	with periodic boundary conditions and non-degenerate positive initial data, where
	$\sigma:\R\to\R$ is a non-random Lipschitz continuous function
	and $\dot{W}$ denotes space-time white noise. If additionally $\sigma(0)=0$ then the solution is
	known to be strictly positive; see Mueller \cite{Mueller1}. In that case, we prove that the oscillation of the 
	logarithm of the solution decays sublinearly as time tends to infinity. 
	Among other things, it follows that, with probability one, all limit points of 
	$t^{-1}\sup_{x\in[-1,1]} \log u(t\,,x)$ and $t^{-1}\inf_{x\in[-1,1]} \log u(t\,,x)$ must coincide. 
	As a consequence of this fact, we prove that, when
	$\sigma$ is linear,  there is a.s.\ only one such
	limit point and hence the entire path decays almost surely
	at an exponential rate.\\[.1in]

\noindent{\it Keywords:} Stochastic heat equation, Almost sure Lyapunov exponents, Oscillation, Decay.\\
	\noindent{\it \noindent AMS 2010 subject classification:}
	Primary: 60H15,  Secondary: 35R60.
\end{abstract}

\tableofcontents

\section{Introduction}

Let $\T=\R/(2\Z)$ denote the 1-dimensional torus, and identify $\T$ with the interval
$[-1\,,1]$ in the usual way. We are interested in the large-time behavior
of the unique continuous solution $u$ to the following stochastic heat equation,
\begin{equation}\label{SHE}
	\partial_t u(t\,,x) = \partial^2_x u(t\,,x) + \sigma(u(t\,,x))\dot{W}(t\,,x)
	\qquad\text{for all $(t\,,x)\in(0\,,\infty)\times\T$},
\end{equation}
where $\dot{W}$ denotes a space-time white noise and $\sigma:\R\to\R$
is a non-random, Lipschitz continuous function that satisfies the following:
\begin{align}\label{sigma(0)=0}
	\sigma(0) = 0
	\quad\text{and}\quad
	|\sigma(z_1)-\sigma(z_2)|\le \lip(\sigma)|z_1-z_2|
	\quad\text{for all $z_1,z_2\in\R$.}
\end{align}
We consider \eqref{SHE} subject to having the initial
profile $u_0:\T\to\R_+$ which is assumed to be an element of $L^\infty(\T)$;
that is, $u_0$ a non-negative, bounded and measurable function that is either non-random or
random but independent of the noise $\dot{W}$. 
Let us mention also that 
the quotient topology of $\T$ automatically imposes a periodic 
boundary condition on \eqref{SHE}; that is, \eqref{SHE} is tacitly 
restricted additionally to satisfy $u(t\,,-1)=u(t\,,1)$ for all $t>0$.

With these assumptions in place, standard arguments show that \eqref{SHE} has a unique 
random-field solution valid for all times $t>0$; see Walsh \cite{Walsh}, Chapter 3.
Walsh's presentation is for the same SPDE but with different boundary conditions.
Small adjustments to that argument will establish the existence and uniqueness of a solution
in the present setting.  

We pause to remind that \eqref{SHE} does 
not make sense if we interpret it literally as written, since we do not expect $u$ to be 
differentiable in either of its variable.  As was pointed out earlier, for example by Walsh \cite{Walsh}, 
\eqref{SHE} is shorthand for the random evolution equation \eqref{mild} below. 
The latter
is also sometimes known as the ``mild,'' or ``integral,'' formulation of \eqref{SHE}.

To avoid degeneracies, we will also assume
that $\inf_{x\in\T} u_0(x)>0$.%
\footnote{The condition $\inf_\T u_0>0$ can be replaced with the weaker 
condition $|\{u_0>0\}|>0$ without changing either \eqref{u>0}  or 
\eqref{KKMS}. Mueller \cite{Mueller1}  proved that the weaker condition 
$|\{u_0>0\}|>0$ suffices to imply \eqref{u>0}. Then, we first condition on 
$u(t_0)$ for a fixed value of $t_0>0$, and then use the Markov property and 
\eqref{u>0} to conclude that \eqref{KKMS} continues to hold when 
$|\{u_0>0\}|>0$.  } 
In this way, Condition \eqref{sigma(0)=0} assures us that
\begin{equation}\label{u>0}
	\P\left\{ u(t\,,x)>0\quad\text{for all $t>0$ and $x\in\T$}\right\}=1;
\end{equation}
see Mueller \cite{Mueller1}.

In a precursor to this paper (see \cite{KKMS}), together with S.-Y. Shiu
we proved the following.  If we additionally assume that
$\inf_{z\neq0}|\sigma(z)/z|>0$, then there exist non-random real numbers
$\lambda_1\ge\lambda_2>0$ such that 
\begin{equation}\label{KKMS}
        \P\left\{ \e^{-\lambda_1 t +o(t)} \le
        \inf_{x\in\T} u(t\,,x) \le \sup_{x\in\T}u(t\,,x)
        \le \e^{-\lambda_2 t +o(t)}
        \quad\text{as $t\to\infty$}\right\}=1.
\end{equation}
This proves that the solution to the stochastic heat equation \eqref{SHE}
dissipates precisely  exponentially as time increases. 
In this context, ``dissipation'' is another way to say that the solution tends to $0$ uniformly 
in the space variable $x$.

Let $U$ denote the solution to the non-random heat equation,
\[
	\partial_t U(t\,,x) = \partial^2_xU(t\,,x) \qquad
	\text{for $(t\,,x)\in(0\,,\infty)\times\T$,}
\]
subject to $U(0)=u_0$, where $u_0$ is the initial profile of the
SPDE \eqref{SHE}. 
We can write $U(t\,,x)=\int_{\T}p_t(x\,,y)u_0(y) \, \d y$ where $p_t(x\,,y)$ denotes the heat
kernel on $\T$; see \eqref{p} below for an expression for $p$. 
It is well known that, as $t\to\infty$, $p_t(x\,,y)\to 1/|\T|=1/2$
uniformly for $x$ and $y$ in $\T$.  Thus we find
$\lim_{t\to\infty}U(t\,,x) = \frac12\int_{\T}u_0(y)\,\d y>0$  uniformly for  $x\in\T$, 
in  contrast to \eqref{KKMS}. In other words, the dissipation result \eqref{KKMS} holds
in large part because the PDE \eqref{SHE} is forced randomly by the  white noise $\dot{W}$.

Here, we continue our analysis of \eqref{KKMS}, and introduce  methods
that yield the following almost-sure asymptotic result, which is the main result of this paper. 
The remainder of the paper is dedicated to the proof of this result. 
Before we state the result,
let us recall that the \emph{oscillation} of a function $f:\T\to\R$ is defined as
follows: For every relatively open set $X\subset\T$,
\[
	\text{\rm Osc}_{_X} (f) = \sup_{x\in X} f(x) - \inf_{y\in X} f(y)
	=\sup_{x,y\in X}|f(x)-f(y)|.
\]

\begin{theorem}\label{th:inf}
With probability one, $\text{\rm Osc}_{_\T}(\log u(t))\le(\log t)^{10+o(1)}$ as $t\to\infty$. 
\end{theorem}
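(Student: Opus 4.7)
My plan is to control $\text{\rm Osc}_{_\T}(\log u(t))$ via a Markov decomposition of the mild form of \eqref{SHE} on a short time window $[t-h, t]$ with $h = h(t)$ marginally larger than $\log\log t$. Explicitly,
\begin{equation*}
u(t,x) \;=\; \int_\T p_h(x,y)\,u(t-h, y)\,\d y \;+\; N_{t-h, t}(x), \quad N_{t-h, t}(x) := \int_{t-h}^t\!\int_\T p_{t-s}(x,y)\sigma(u(s,y))\,W(\d s\,\d y).
\end{equation*}
The spectral gap of $-\partial_x^2$ on $\T$ gives $\sup_{x\in\T}\bigl|\int_\T p_h(x,y) f(y)\,\d y - |\T|^{-1}\!\int_\T f\bigr| \le Ce^{-c_0 h}\|f\|_\infty$, so the deterministic piece above is spatially constant up to a $(\log t)^{-\kappa}$-fraction of $\|u(t-h)\|_\infty$, which is negligible for $\text{\rm Osc}_{_\T}(u(t))$. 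The problem therefore reduces to bounding $\sup_{x\in\T}|N_{t-h, t}(x)|$ and comparing the result with $\inf_{x\in\T} u(t,x)$.

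\textbf{Key steps.} First, combine Burkholder-Davis-Gundy with the Lipschitz bound $|\sigma(u)|\le\lip(\sigma)\,u$ and the a.s.\ upper bound $\sup_x u(s,x)\le e^{-\lambda_2 s+o(s)}$ from \eqref{KKMS} (localized, if needed, to the high-probability event on which this bound actually holds beyond time $t-h$) to produce $L^p$ estimates for the spatial increments of $N_{t-h,t}$. Second, a Kolmogorov chaining argument upgrades these pointwise bounds to a uniform one,
\begin{equation*}
\sup_{x\in\T}|N_{t-h,t}(x)| \;\le\; (\log t)^{C_1}\sup_{s\in[t-h,t]}\|u(s)\|_\infty,
\end{equation*}
valid with high probability. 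Third, combine with the deterministic-part bound to obtain the same control on $\text{\rm Osc}_{_\T}(u(t))$, and apply $|\log a - \log b|\le|a-b|/\min(a,b)$ with the a.s.\ lower bound $\inf_x u(t,x)\ge e^{-\lambda_1 t+o(t)}$ from \eqref{KKMS} to translate the result into an estimate on $\text{\rm Osc}_{_\T}(\log u(t))$. Fourth, a Borel-Cantelli argument along a discrete sequence $t_n\uparrow\infty$, supplemented by a short-time path-regularity estimate interpolating between consecutive $t_n$, promotes the bound to all large $t$ almost surely.

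\textbf{Main obstacle.} The delicate step is that the ratio $\sup_{s\in[t-h,t]}\|u(s)\|_\infty\big/\inf_x u(t,x)$ is a priori controlled by \eqref{KKMS} alone only by $e^{(\lambda_1-\lambda_2)t+o(t)}$, which is exponentially large whenever $\lambda_1>\lambda_2$ and therefore does not yield a polylogarithmic bound. Closing this gap requires a bootstrap: one treats the rough oscillation bound output by the first pass as an input to sharpen $\|u(s)\|_\infty = \inf_x u(s,x)\cdot\exp(\text{\rm Osc}_{_\T}(\log u(s)))$ in the direction of $e^{-\lambda_1 s+o(s)}$, and then feeds the improved control on $\|u(s)\|_\infty$ back into the noise analysis; a controlled number of iterations (or a self-referential inequality) should drive the oscillation down to the polylogarithmic scale. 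The specific exponent $10$ in the theorem presumably records the polylogarithmic losses accumulated across the chaining argument and the bootstrap iterations.
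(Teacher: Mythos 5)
There is a genuine gap, and it sits exactly where your ``Main obstacle'' paragraph stops: the lower bound on $\inf_{x\in\T}u(t\,,x)$. Your decomposition $u(t)=P_hu(t-h)+N_{t-h,t}$ with $h$ of order $\log\log t$ gives a deterministic part close to $\tfrac12\|u(t-h)\|_{L^1(\T)}$, but the noise term over that window has $L^k(\Omega)$-fluctuations of order $\sqrt{k}\,h^{1/4}\|u\|_{L^1(\T)}$ (even after the $L^\infty$/$L^1$ comparison is granted), and a Borel--Cantelli argument forces $k\approx(\log t)^a$, so the negative excursions of $N_{t-h,t}$ exceed the deterministic part. The decomposition therefore yields \emph{no} lower bound on $\inf_x u(t\,,x)$; shrinking $h$ to tame the noise destroys the spectral-gap flattening, and enlarging $h$ makes the noise worse. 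The bootstrap you sketch does not repair this: the first pass outputs $\text{\rm Osc}_{_\T}(\log u(s))\le e^{(\lambda_1-\lambda_2)s(1+o(1))}$, and feeding $\|u(s)\|_{L^\infty(\T)}=\inf_x u(s\,,x)\exp(\text{\rm Osc}_{_\T}(\log u(s)))$ back in only gives a doubly exponential bound --- knowing the oscillation is at most something \emph{large} never improves the upper bound on $\|u(s)\|_{L^\infty(\T)}$, so the iteration moves in the wrong direction. Relatedly, you appear to be aiming at a polylogarithmic bound on $\text{\rm Osc}_{_\T}(u(t))/\inf_x u(t\,,x)$, which would give $\text{\rm Osc}_{_\T}(\log u(t))=O(\log\log t)$; the theorem only asserts $\sup_x u/\inf_x u\le\exp((\log t)^{10+o(1)})$, an enormous ratio, and that weaker target reflects a real loss that your scheme has no mechanism to incur or to control.

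The paper's proof splits the problem differently. First (Theorem \ref{th:S/M}) it shows $\log\|u(t)\|_{L^\infty(\T)}-\log\|u(t)\|_{L^1(\T)}=O(\log\log t)$ by a stopping-time iteration: at the first time the ratio reaches $(\log n)^{\beta/2}$, the heat flow tames the tall peak on the mesoscopic scale $N^{-\gamma}$ (Proposition \ref{pr:KPZ1}) while the total mass barely moves (Proposition \ref{pr:valleys}), so the ratio cannot double; iterating $\approx N^\gamma e^{N^{2/\beta}}$ times gives a summable tail. Second (\S\ref{sec:proof_main}), the infimum is bounded below against $\|u(t)\|_{L^1(\T)}$ by a Mueller-type support argument: starting from one point where $v_k(t_k\,,\cdot)\ge\tfrac14$, positivity spreads over balls of radius $\sqrt t$ per micro-step $t=\tfrac18(\log t_k)^{-2\kappa}$, losing a fixed factor $e^{-2\chi}$ per step; covering $\T$ takes $J\approx(\log t_k)^{\kappa}$ steps, and the accumulated loss $\exp(-7(\log t_k)^{\kappa})$ with $\kappa>10$ is precisely the source of the exponent $10$. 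Your proposal contains no analogue of this spreading argument, and without it the theorem is out of reach.
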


\begin{remark}
We cannot reduce the exponent 10 in Theorem \ref{th:inf} with our present 
methods, and we do not know if this exponent is sharp.  
\end{remark}

Since $\inf_{x\in\T} \log u(t\,,x) + \log 2 \le
\log \|u(t)\|_{L^1(\T)} \le \sup_{x\in\T} \log u(t\,,x) + \log 2,$
it follows from Theorem \ref{th:inf} that, with probability one,
\[ 
	t^{-1} \log  \|u(t)\|_{L^\infty(\T)}  -t^{-1} \log \|u(t)\|_{L^1(\T)}\to 0  
	\quad\text{as $t\to\infty$.}
\]
The quantity $\|u(t)\|_{L^\infty(\T)} =\sup_{x\in\T}u(t\,,x)$ 
measures the size of the tallest peaks of $u(t)$, and
$\|u(t)\|_{L^1(\T)}=\int_\T u(t\,,x)\,\d x$ denotes the ``total mass'' at time $t$.
In this way we see that the tallest peaks and the total mass
almost surely have the same asymptotic behavior, to leading exponential order.
As it turns out, this implication can be effectively reversed:
We first prove  in \S\ref{sec:Lp} the following improvement of the preceding
display:
\begin{equation}\label{eq:Lpq}   
\log  \|u(t)\|_{L^\infty(\T)}  - \log \|u(t)\|_{L^1(\T)}
	=O (\log\log t) 
	\quad\text{as $t\to\infty$.}
\end{equation}
Then, in \S\ref{sec:proof_main} we 
appeal to a support  argument of Mueller \cite{Mueller1} in order to 
prove that \eqref{eq:Lpq} implies that
$\inf_{x\in \T} u(t\,, x)$ decays at the same exponential rate as $\|u(t)\|_{L^1(\T)}$.
The combination of these efforts establishes Theorem \ref{th:inf}.

Next, we make a few comments about the nature of 
the decay of the solution to \eqref{SHE}. 

Thanks to Theorem \ref{th:inf} and
our earlier collaboration with S.-Y. Shiu \cite{KKMS}
on the dissipation of parabolic SPDEs, we can see that
\begin{align*}
	-\infty  < \liminf_{t\to\infty} t^{-1} \inf_{x\in\T}\log  u(t\,,x)
		&=\liminf_{t\to\infty}  t^{-1}  \sup_{x\in\T}\log  u(t\,,x)<0,
		\quad\text{and}\\
	-\infty  < \limsup_{t\to\infty} t^{-1} \inf_{x\in\T}\log  u(t\,,x)
		&=\limsup_{t\to\infty} t^{-1} \sup_{x\in\T}\log  u(t\,,x) <0,
\end{align*}
provided additionally that $\inf_{z\neq0}|\sigma(z)/z|>0$.
It is natural to try and find conditions that ensure that the above $\liminf$s
and $\limsup$s are in fact bona fide limits. Such conditions would readily imply 
that there exists a number $\lambda>0$ such that 
\begin{equation}\label{lim}
	\adjustlimits\lim_{t\to \infty} \sup_{x\in\T}
	\left| t^{-1} \log u(t\,,x) + \lambda \right| = 0\qquad\text{almost surely}.
\end{equation}
That is, we would like to know when the entire solution dissipates at a precise
exponential rate. In the language of the literature on random media, \eqref{lim} says that
the solution to \eqref{SHE} has a \emph{uniform almost-sure Lyapunov exponent}
\cite{CM,CKM,CMS2002}. The earlier combined works of Carmona and Molchanov \cite{CM}
and Zeldovich, Molchanov, Ruzmaikin, and Sokolov \cite{ZMRS1985,ZMRS1988}
contain engaging discussions of the role of dissipation, and more generally
intermittency, in equations of random media from  mathematical and physical 
viewpoints, respectively.

We are able to use Theorem \ref{th:inf} in order
 to carry out this program in the special case of the 
\emph{parabolic Anderson model} only.

\begin{theorem}[A parabolic Anderson model]\label{th:PAM}
	Suppose in addition that there exists $Q>0$ such that $\sigma(z)=Qz$ for all $z\in\R$, and
	that $\inf_{x\in\T} u_0(x)>0$. 
	Then,  there exists a non-random real number
	$\lambda>0$ such that \eqref{lim} holds. Moreover, $\lambda$ does not depend on 
	the particular choice of $u_0$.
\end{theorem}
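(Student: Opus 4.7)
The plan is to combine the oscillation bound of Theorem~\ref{th:inf} with the linear structure of the PAM to identify the Lyapunov exponent $\lambda$. Theorem~\ref{th:inf} and \eqref{eq:Lpq} together yield $\text{\rm Osc}_{_\T}(\log u(t))=o(t)$ almost surely as $t\to\infty$, so \eqref{lim} will follow once one exhibits a non-random $\lambda\in(0,\infty)$ for which $t^{-1}\log M(t)\to-\lambda$ a.s., with $M(t):=\|u(t)\|_{L^1(\T)}$. Strict positivity $\lambda>0$ is inherited from \eqref{KKMS}: since $\sigma(z)=Qz$ forces $\inf_{z\ne 0}|\sigma(z)/z|=|Q|>0$ and $M(t)\le 2\|u(t)\|_{L^\infty(\T)}$, any limit point of $t^{-1}\log M(t)$ is bounded above by $-\lambda_2<0$.

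Because \eqref{SHE} is linear, there is a random Green's kernel $\mathcal{G}(s,y;t,x)$, measurable with respect to $W|_{[s,t]\times\T}$, satisfying $u^{f}(s,t,x)=\int_\T \mathcal{G}(s,y;t,x)\,f(y)\,\d y$ for any $f\ge 0$ and $s<t$. The Markov property and linearity give, for $0<s<t$,
\[
M(t)=\int_\T u(s,y)\,g(s,y;t)\,\d y,\qquad g(s,y;t):=\int_\T\mathcal{G}(s,y;t,x)\,\d x.
\]
A time-reversal duality, built on the invariance in law of space--time white noise under $t\mapsto -t$, will identify the distribution of $y\mapsto g(s,y;t)$ with that of $y\mapsto v(t-s,y)$, where $v$ is the PAM solution starting from $v_0\equiv 1$. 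Applying Theorem~\ref{th:inf} to $v$ then yields $\text{\rm Osc}_{_\T}(\log g(s,\cdot;t))\le(\log(t-s))^{10+o(1)}$ almost surely as $t-s\to\infty$. Writing $M_*(s,t):=\|u^{1}(s,t,\cdot)\|_{L^1(\T)}$, so that the spatial average of $g(s,\cdot;t)$ equals $M_*(s,t)/2$, this pointwise oscillation bound integrates up to
\[
\bigl|\log M(t)-\log M(s)-\log M_*(s,t)+\log 2\bigr|\le(\log(t-s))^{10+o(1)}\quad\text{a.s.}
\]
Crucially, $M_*(s,t)$ depends only on the noise on $[s,t]\times\T$, so it is independent of $M(s)$, and $M_*(s,t)\stackrel{d}{=}M_*(0,t-s)$ by time-stationarity of $W$.

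Armed with this approximate additive decomposition with independent, identically distributed increments and sublinear error, I would first specialize to $u_0\equiv 1$, so that $M(t)=M_*(0,t)$. Standard positive and negative moment estimates for the PAM yield $\E|\log M_*(0,T)|<\infty$ for each $T>0$, and the approximate additivity at scale $T$ writes $\log M_*(0,nT)$ as an iid sum of $n$ copies of $\log M_*(0,T)$ plus $-n\log 2$ and a remainder of size $O(n(\log T)^{10+o(1)})$. The strong law of large numbers then gives $(nT)^{-1}\log M_*(0,nT)\to T^{-1}[\E\log M_*(0,T)-\log 2]+O(T^{-1}(\log T)^{10})$ almost surely. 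Letting $T\to\infty$, where the first term has a finite limit by Fekete's lemma applied to the (nearly super-)additive expectation sequence $T\mapsto\E[\log M_*(0,T)]$, produces a non-random $\mu\in(0,\infty)$ with $t^{-1}\log M_*(0,t)\to-\mu$ a.s.; the extension from the subsequence $\{nT\}$ to all $t\to\infty$ is handled via moment bounds on $\log M_*(nT,t)$ for $t\in[nT,(n+1)T]$ combined with Borel--Cantelli. For a general initial profile $u_0$ with $\inf u_0>0$, the sandwich $(\inf u_0)u^{1}(0,t,x)\le u(t,x)\le(\sup u_0)u^{1}(0,t,x)$, which is a consequence of linearity and positivity, gives $|\log M(t)-\log M_*(0,t)|=O(1)$ and hence $t^{-1}\log M(t)\to-\mu$ as well, with $\mu$ independent of the choice of $u_0$. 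Setting $\lambda:=\mu$ and combining with the oscillation control of Theorem~\ref{th:inf} establishes \eqref{lim}. The most delicate step will be the time-reversal duality that transfers the oscillation bound of Theorem~\ref{th:inf} to the random field $g(s,\cdot;t)$; its rigorous justification requires care with the time reversal of the underlying It\^o SPDE.
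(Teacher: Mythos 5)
Your route is genuinely different from the paper's, and as written it has a gap at the point where you sum the errors. After the (plausible, but itself nontrivial) time-reversal identification of $y\mapsto g(jT,y;(j+1)T)$ with an independent copy of $v(T,\cdot)$, the per-block error in your approximate additivity is $E_j:=\text{\rm Osc}_{_\T}\bigl(\log g(jT,\cdot\,;(j+1)T)\bigr)$, and these are i.i.d.\ copies of $\text{\rm Osc}_{_\T}(\log v(T))$ for the \emph{fixed} block length $T$. Theorem \ref{th:inf} is an almost-sure asymptotic statement about a single solution as its time argument tends to infinity; for fixed $T$ it tells you only that $E_1$ is a.s.\ finite. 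It does not give $\max_{j\le n}E_j\lesssim(\log T)^{10+o(1)}$ (an i.i.d.\ sequence with unbounded support exceeds any fixed level infinitely often), so the claimed remainder $O(n(\log T)^{10+o(1)})$ is not justified. The correct a.s.\ size of $\sum_{j<n}E_j$ is $n\,\E E_1$ by the strong law --- assuming $\E E_1<\infty$, which also does not follow from Theorem \ref{th:inf} --- and your Fekete/SLLN scheme then requires $\E\bigl[\text{\rm Osc}_{_\T}(\log v(T))\bigr]=o(T)$. That is an $L^1(\Omega)$ version of the oscillation bound, which is strictly stronger than the a.s.\ statement you are quoting and is not proved in the paper. (The negative-moment estimate \eqref{MN} is stated only for $t\in(0,1)$, and the lower-bound control of $\inf_x u$ in \S\ref{sec:proof_main} is purely asymptotic and along the special times $s_k+t_k$, so extracting a fixed-time moment bound on the oscillation would require reworking the quantitative estimates of \S\ref{sec:Lp}--\S\ref{sec:proof_main}.) The same issue infects the ``nearly superadditive'' expectation sequence to which you want to apply Fekete's lemma.

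For comparison, the paper avoids error terms altogether by working with $S_t=\|u(t)\|_{L^\infty(\T)}$ rather than the total mass: the comparison theorem gives $u(t+s,\cdot)\le u_s(t,\cdot)$ where $u_s$ is restarted from the constant profile $S_s$, and linearity rescales this to the \emph{exact} submultiplicativity $S_{t+s}\le S_s\cdot(S_t\circ\vartheta_s)$. Kingman's subadditive ergodic theorem (in the continuous-time form of Proposition \ref{pr:Kingman}, whose hypotheses are checked in Lemma \ref{lem:mod:S}) then yields the a.s.\ limit of $t^{-1}\log S_t$, the Kolmogorov $0$--$1$ law makes it non-random, \eqref{KKMS} pins it in $(-\infty,0)$, and Theorem \ref{th:inf} upgrades the sup to a uniform limit; general $u_0$ with $\inf_\T u_0>0$ is handled by sandwiching between constant initial data. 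If you want to salvage your decomposition, the cleanest fix is to note that your Green's-kernel identity already gives exact submultiplicativity of $\|u(t)\|_{L^1(\T)}$ up to the factor $\sup_y g$ versus its average, i.e.\ to replace the SLLN by a subadditive-ergodic argument on $\log\|u(t)\|_{L^\infty(\T)}$ --- which is essentially the paper's proof.
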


\begin{remark}\label{re:PAM}
	A day after the release of the first draft of this paper, Gu and Komorowski \cite[Proposition 4.1]{GK2}
	announced results that include the following striking formula:
	\[
		\lambda = \frac1\pi\left(\frac Q2\right)^6\e^{(\pi/Q)^2} \int_0^\infty
		\frac{\sinh(y)}{[\cosh(y/2)]^6}
		\sin\left( \frac{2\pi y}{Q^2}\right)\e^{-(y/Q)^2}\,\d y.
	\]
	[To apply Proposition 4.1 of \cite{GK2}, one sets their $\beta$ to our $Q$, and their $L$ to $2$.]
\end{remark}

There is a wide literature that implies the dissipation of the solution, particularly when:
\begin{compactenum}
	\item \eqref{SHE} is replaced by a similar SPDE on $\R_+\times\R$; 
	\item Almost sure convergence is replaced by another mode of convergence; and more significantly,
	\item Uniform convergence in \eqref{lim} is replaced with pointwise convergence.
\end{compactenum}
For example, Bertini and Giacomin \cite{BG} have studied \eqref{SHE}
on $\R_+\times\R$ in the special case that $\sigma(z)=Qz$ for all $z\in\R$
and $u_0 = \exp(B)$ where $B$ is a two-sided Brownian motion that is independent
of $\dot{W}$ [to ensure stationarity]. They proved that for every non random
$\varphi\in C^\infty_0(\R)$,
\[
	\frac1t\int_{-\infty}^\infty\varphi(y) \log u(t\,,y)\,\d y 
	\xrightarrow{L^2(\Omega)}
	-\frac{Q^4}{24}\int_{-\infty}^\infty\varphi(x)\,\d x\qquad
	\text{as $t\to\infty$}.
\]
Ideally, one would like to know that the above holds when $\varphi=\delta_x$ 
for an arbitrary $x\in\R$ and
with $L^2(\Omega)$-convergence replaced by almost sure convergence.
This would show that the [pointwise] 
\emph{almost sure Lyapunov exponent} of \eqref{SHE} is
$-Q^4/24$.
Amir, Corwin, and Quastel \cite{ACQ} considered the same SPDE as in \cite{BG},
but started at $u_0=\delta_0$, and proved among other interesting
things that indeed for every $x\in\R$ fixed,
\begin{equation}\label{ACQ}
	t^{-1} \log u(t\,,x) \xrightarrow\P -\frac{Q^4}{24}
	\quad\text{as $t\to\infty$.}
\end{equation}
The more recent work of Ghosal and Lin \cite{GL} implies that \eqref{ACQ} holds
for a wide class of initial data. Additional references can be found in a recent paper
by Gu and Komorowski \cite{GK}, where the asymptotics of the linear form of \eqref{SHE} is 
considered [that is, $\sigma(z)=Qz$ for all $z\in\R$],  
together with an associated central limit theorem,  
via a Feynman-Kac representation of a smoothed version of \eqref{SHE}
in spatial all spatial dimensions. The results of \cite{GK} in fact 
also imply the following, valid in the setting of the present paper: As $t\to\infty$,
\begin{equation}\label{GK}
	\frac{\log u(t\,,0) +\lambda t}{\sqrt t} \xrightarrow{\rm d\,}\eta,
\end{equation}
where $\lambda$ is the same constant that appeared in Theorem \ref{th:PAM} and Remark \ref{th:PAM},
and $\eta$ has a centered, non-degenerate normal distribution. When combined with Theorem
\ref{th:inf}, the result \eqref{GK} of Gu and Komorowski \cite{GK} immediately implies the
following surprising fact.
\begin{corollary}[A parabolic Anderson model]\label{th:PAM}
	Suppose in addition that there exists $Q>0$ such that $\sigma(z)=Qz$ for all $z\in\R$, and
	that $\inf_{x\in\T} u_0(x)>0$. Let $\eta$ and $\lambda$ be as in \eqref{GK}. Then, as $t\to\infty$,
	\[
		\frac{\log u(t) +\lambda t \mathbb{1}}{\sqrt t}
		\Rightarrow\eta\mathbb{1},
	\]
	where $\mathbb{1}(x)=1$ for all $x\in\T$, and ``$\Rightarrow$'' denotes 
	weak convergence in the space $C(\T)$.
\end{corollary}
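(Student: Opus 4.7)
The plan is a clean two-piece decomposition. Define the random continuous function
\[
V_t(x) := \frac{\log u(t\,,x) + \lambda t}{\sqrt t}, \qquad x \in \T,\ t>0,
\]
and split $V_t = V_t(0)\mathbb{1} + R_t$, where $R_t := V_t - V_t(0)\mathbb{1} \in C(\T)$. The goal is to prove $V_t \Rightarrow \eta\mathbb{1}$ in $C(\T)$, which will follow once I show (i) the remainder $R_t$ vanishes in probability in $C(\T)$, and (ii) the constant-valued term $V_t(0)\mathbb{1}$ converges weakly to $\eta\mathbb{1}$ in $C(\T)$.

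For (i), I would control $R_t$ via Theorem \ref{th:inf}. For every $x\in\T$,
\[
|R_t(x)| = \frac{|\log u(t\,,x) - \log u(t\,,0)|}{\sqrt t} \le \frac{\text{\rm Osc}_{_\T}(\log u(t))}{\sqrt t},
\]
so the uniform bound gives $\|R_t\|_{C(\T)} \le (\log t)^{10+o(1)}/\sqrt t \to 0$ almost surely as $t \to \infty$, by Theorem \ref{th:inf}. In particular, $R_t \to 0$ in probability in $C(\T)$.

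For (ii), \eqref{GK} yields $V_t(0) \Rightarrow \eta$ in $\R$. Since the map $\R \ni c \mapsto c\mathbb{1} \in C(\T)$ is continuous, the continuous mapping theorem gives $V_t(0)\mathbb{1} \Rightarrow \eta\mathbb{1}$ in $C(\T)$. Slutsky's theorem in the Polish space $C(\T)$ (any weakly convergent sequence plus a sequence tending to $0$ in probability is weakly convergent to the same limit) then combines (i) and (ii) to conclude $V_t \Rightarrow \eta\mathbb{1}$.

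I do not foresee any real obstacle: the corollary is genuinely a packaging of Theorem \ref{th:inf} with the pointwise CLT \eqref{GK}. The former forces the entire spatial profile of $\log u(t\,,\cdot) + \lambda t \mathbb{1}$ to be ``flat'' at the $\sqrt t$ scale, so whatever Gaussian fluctuation $V_t(0)$ exhibits must be shared, to leading order, by every $V_t(x)$, which is exactly what weak convergence to a constant-valued function in $C(\T)$ expresses.
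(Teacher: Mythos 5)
Your proof is correct and is precisely the argument the paper has in mind: the paper states the corollary as an immediate consequence of combining Theorem \ref{th:inf} with \eqref{GK} and does not even write out the details, which are exactly your decomposition $V_t=V_t(0)\mathbb{1}+R_t$, the bound $\|R_t\|_{C(\T)}\le \text{\rm Osc}_{_\T}(\log u(t))/\sqrt t\to0$ a.s., and Slutsky in $C(\T)$. Nothing to add.
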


There also are results with the desired almost sure convergence,
particularly when \eqref{SHE} is replaced with an SPDE on $\R_+\times\Z^d$, 
in which case $\partial^2_x$ is supplanted by the discrete Laplacian on $\Z^d$. Notably,
Carmona, Koralov, and Molchanov \cite{CKM}  and Cranston, Mountford, and Shiga 
\cite{CMS2002}  have proved independently and nearly at the same time
that $\Lambda(Q) = \lim_{t\to\infty} t^{-1}\log u(t\,,x)$
exists almost surely and is in $(-\infty\,,\infty)$ a.s. for every fixed $x\in\Z^d$.
Moreover, one can expect based on Ref.s \cite{CKM,CMS2002} that $\Lambda(Q)<0$
for all sufficiently large values of $Q$.\footnote{To be sure, 
	Carmona, Koralov, and Molchanov \cite{CKM}  and Cranston, Mountford, and Shiga 
	\cite{CMS2002} study the semi-discrete stochastic
	partial differential equation
	$\partial_t u = \kappa \Delta u+ u\dot{B}$ where $\kappa>0$, $\Delta$
	denotes the discrete Laplacian on $\Z^d$,
	and $\dot{B}$ is space-time white noise indexed by $\R_+\times\Z^d$. 
	Their results imply that $\Lambda(\kappa) =\lim_{t\to\infty}t^{-1}\log u(t\,,x)$ 
	exists almost surely,
	and satisfies $\Lambda(\kappa)<0$ for all sufficiently small $\kappa>0$.
	}

We pause to explore the sharpness of the linearity condition of Theorem \ref{th:PAM}.

\begin{OP}
	Let us first suppose that there exists $Q>0$ such that 
	$\sigma(z)=Qz$ for all $z\in\R$. In that case, we have seen that \eqref{lim} holds for some
	$\lambda=\lambda(Q)$, and one expects $\lambda(Q)$ to have nontrivial dependence 
	on $Q$.  For example, our recent work with S.-Y Shiu \cite{KKMS} proves this
	by showing that, if in addition $\inf_{z\neq0}|\sigma(z)/z|>0$, then
	$Q^{-4}\lambda(Q)$ is bounded  from above and below by positive constants,
	uniformly for all $Q\ge1$.
	Now suppose instead that there exist large numbers $Q_1\neq Q_2$
	and non-overlapping intervals
	$I_1,I_2,\ldots\subset\R$ with rapidly decreasing
	lengths such that
	$\sigma(z)=Q_1 z$ whenever $z\in I_{2n}$ for some $n\in\N$,
	and $\sigma(z)= Q_2 z$ when $z\in I_{2n+1}$ for some $n\in\N$.
	Then, Theorem \ref{th:PAM} intuitively suggests that, for a suitable choice of the intervals
	$I_1,I_2,\cdots$, we might expect $u(t)$  to decrease at an exponential rate 
	$\lambda(Q_1)$ for a while,  then switch to decaying at rate $\lambda(Q_2)$ for a while, then  back to 
	the decay rate $\lambda(Q_1)$, and so 
	on.  This heuristic argument implies that 
	the linearity condition of Theorem \ref{th:PAM} for $\sigma$
	is likely to be close to be  sharp. It should be possible to build on the quantitative assertions of this
	paper in order to make such counterexamples rigorous. However, that undertaking would 
	require a good deal more effort still. Because we are presently concerned with establishing
	positive results, we leave a rigorous construction as an open problem.
\end{OP}

Let us conclude the Introduction by
defining some notation that is used throughout.  
As is customary, we define $\lip(f)$ to be the optimal Lipschitz constant
of every real-valued function $f:I\to\R$, defined any subinterval $I$ of $\R$; that is,
\[
	\lip(f) = \sup_{\substack{x,y\in I\\x\neq y}}
	\frac{|f(x)-f(y)|}{|x-y|}.
\]
Thus, we can interpret the constant $\lip(\sigma)$ in \eqref{sigma(0)=0} as
the optimal such choice.

We occasionally let
\begin{equation}\label{IS}
	S_t = \sup_{x\in\mathbb{T}}u(t\,,x) = \|u(t)\|_{L^\infty(\T)}
	\qquad\text{for all $t\ge0$},
\end{equation}
in order to simplify the exposition.

Define $p_t(x\,,y)$ to be the heat kernel associated to the Laplace operator on $\T$
[tacitly endowed with periodic boundary conditions]. That is,
\begin{equation}\label{p}
	p_t(x\,,y) =p_t(x-y)= \frac{1}{\sqrt{4\pi t}}\sum_{n=-\infty}^\infty
	\exp\left( -\frac{(x-y+2n)^2}{4t} \right)\qquad\text{for all $t>0$ and $x,y\in\T$}.
\end{equation}
The heat kernel induces the transition semigroup $\{P_t\}_{t>0}$ of Brownian motion on $\T$,
defined via
\begin{equation}\label{P}
	(P_tu_0)(x) = \int_\T p_t(x\,,y) u_0(y)\,\d y
	\qquad\text{for $t>0$ and $x\in\T$},
\end{equation}
and all non-negative $u_0\in L^\infty(\T)$.  In this way,
we can give rigorous meaning to the stochastic  PDE \eqref{SHE}
using its mild formulation in the same manner as in  Walsh \cite{Walsh}. Namely,
\begin{equation}\label{mild}
	u(t\,,x) = (P_tu_0)(x) + \int_{(0,t)\times\T} p_{t-s}(x\,,y)
	\sigma(u(s\,,y))\, W(\d s\,\d y).
\end{equation}

For every real number $k\in[1\,,\infty)$, we always write
\[
	\|X\|_k = \{\E (|X|^k)\}^{1/k}
\]
for the $L^k(\Omega)$-norm of a random variable $X\in L^k(\Omega)$.

Throughout this paper, we follow the customary habit of writing $f \lesssim g$ when we mean that there 
exists a constant $C>0$ such that $f(x)\le Cg(x)$ for all $x$ in the stated range.  We write $f\gtrsim g$ 
iff $g\lesssim f$, and $f\asymp g$ is short-hand for the statement that both relations $f\lesssim g$
and $f\gtrsim g$ hold.

\section{Proof of Theorem \ref{th:PAM}}

Throughout this section,
we assume that the hypotheses of Theorem \ref{th:PAM} are met; that is,
\begin{equation}\label{s:PAM}
	u_0(x)=1
	\quad\text{and}\quad
	\sigma(z)=Qz\qquad\text{for all $x\in\T$ and $z\in\R$},
\end{equation}
where $Q>0$ is a fixed real number.
With this in mind,  we present the following which is the main result of this section.

\begin{theorem}\label{th:1}
	$\lim_{t\to\infty} t^{-1}\log \|u(t)\|_{L^\infty(\T)}$ exists and
	is in $(-\infty\,,0)$ almost surely.
\end{theorem}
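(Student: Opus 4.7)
The plan is to derive Theorem \ref{th:1} from Kingman's subadditive ergodic theorem, using a submultiplicative property of $S_t = \|u(t)\|_{L^\infty(\T)}$ that is special to the linear setting $\sigma(z)=Qz$, $u_0=\1$.

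\textbf{Step 1 (submultiplicativity).} For every $0\le s\le t$, let $\tilde W^{(s)}(r,x)=W(s+r,x)-W(s,x)$ and let $\tilde u^{(s)}$ solve the PAM driven by $\tilde W^{(s)}$ with $\tilde u^{(s)}(0)=\1$. The Markov property applied to the mild formulation \eqref{mild} identifies $\{u(s+r)\}_{r\ge 0}$ with the PAM started from $u(s,\cdot)$ and driven by $\tilde W^{(s)}$. By linearity, the difference $S_s\tilde u^{(s)}(r)-u(s+r)$ solves the \emph{same} linear SPDE, namely $\partial_r v = \partial_x^2 v + Qv\dot{\tilde W}^{(s)}$, with non-negative initial datum $S_s\1-u(s,\cdot)\ge 0$; hence it is a.s.\ pointwise non-negative by the standard positivity preservation for the linear PAM (obtained by approximating $\tilde W^{(s)}$ by smooth noise, applying the Feynman--Kac representation to get positivity pathwise, and passing to the limit; alternatively by a comparison argument in the spirit of Mueller \cite{Mueller1}). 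Taking suprema in $x$ yields
\[
S_t \le S_s\cdot \tilde S_{s,t},\qquad \tilde S_{s,t}:=\|\tilde u^{(s)}(t-s)\|_{L^\infty(\T)},
\]
where $\tilde S_{s,t}$ is equal in law to $S_{t-s}$ and is independent of the white noise on $[0,s]\times\T$.

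\textbf{Step 2 (Kingman).} For non-negative integers $m\le n$, put $X_{m,n}=\log\tilde S_{m,n}$. Since $\tilde u^{(0)}=u$, one has $X_{0,n}=\log S_n$. Iterating the inequality of Step 1 gives the subadditivity $X_{m,n}\le X_{m,k}+X_{k,n}$ for $m<k<n$, and $\{X_{m,n}\}$ is stationary under integer time shifts because the white noise is. Standard $L^p(\Omega)$ estimates for the PAM, obtained by iterating \eqref{mild} and invoking the Burkholder--Davis--Gundy inequality, ensure $\E[X_{0,1}^+]<\infty$. Kingman's subadditive ergodic theorem, together with the ergodicity of the time shift acting on the white noise, then produces a deterministic $L\in[-\infty,\infty)$ with $n^{-1}\log S_n\to L$ almost surely.

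\textbf{Step 3 (localizing $L$ and continuous time).} Because $\inf_{z\ne 0}|\sigma(z)/z|=Q>0$, the hypothesis of \eqref{KKMS} is met, so $-\lambda_1\le \liminf_t t^{-1}\log S_t$ and $\limsup_t t^{-1}\log S_t\le -\lambda_2$ almost surely, which forces $L\in[-\lambda_1,-\lambda_2]\subset(-\infty,0)$. To upgrade $n^{-1}\log S_n\to L$ to $t^{-1}\log S_t\to L$ along real $t\to\infty$, I would use Step 1 with $s=\lfloor t\rfloor$ for the upper bound on $\log S_t$, and the analogous lower bound $S_t\ge (\inf_x u(\lfloor t\rfloor,x))\cdot\tilde S_{\lfloor t\rfloor,t}$ (from applying the same monotonicity argument with the pointwise lower bound $u(s,\cdot)\ge \inf_x u(s,x)\cdot\1$) together with \eqref{KKMS} to control $\inf_x u(\lfloor t\rfloor,x)$. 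The residual $\log\tilde S_{\lfloor t\rfloor,t}$, with $t-\lfloor t\rfloor\in[0,1)$, is absorbed via Borel--Cantelli and a moment bound of the form $\E[\sup_{r\in[0,1]}|\log\|\tilde u^{(n)}(r)\|_{L^\infty(\T)}|^p]<\infty$.

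The principal obstacle is executing Step 1 rigorously: the non-negativity of the difference solution must be established even when its initial datum $S_s\1-u(s,\cdot)$ vanishes on a set of positive measure, which is where the smooth-noise/Feynman--Kac approximation (or a suitable support-type argument) must be handled with care. The remaining ingredients---moment bounds for the PAM, ergodicity of the shift, and the continuous-time interpolation---are expected to be routine given the tools already developed in the earlier sections.
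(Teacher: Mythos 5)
Your proposal is correct in substance and follows the same strategy as the paper: exploit linearity to turn the Markov property plus the comparison theorem into the submultiplicative bound $S_{t+s}\le S_s\,(S_t\circ\vartheta_s)$, feed $\log S$ into Kingman's subadditive ergodic theorem, use triviality of the invariant $\sigma$-algebra to make the limit deterministic, and invoke \eqref{KKMS} to place it in $(-\infty,0)$. Two remarks on where your execution diverges from the paper's. First, the ``principal obstacle'' you identify in Step 1 is not actually an obstacle: the paper simply cites Shiga's comparison theorem for \eqref{SHE}, which requires only that the two initial profiles be ordered ($u(s,\cdot)\le S_s\mathbb{1}$ pointwise), not that their difference be strictly positive; no Feynman--Kac or smooth-noise approximation is needed. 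Second, you run Kingman in discrete time and then interpolate to continuous $t$, whereas the paper proves a continuous-time version (Proposition \ref{pr:Kingman}) and verifies its moment-continuity hypotheses in Lemma \ref{lem:mod:S}. Your route is viable, but the moment bound you invoke in Step 3, $\E\bigl[\sup_{r\in[0,1]}|\log\|\tilde u^{(n)}(r)\|_{L^\infty(\T)}|^p\bigr]<\infty$, conceals the real technical work: the negative part of the logarithm requires a uniform lower bound on $\sup_x\tilde u^{(n)}(r,x)$, i.e.\ negative moments of the solution. The paper handles exactly this via the Mueller--Nualart estimate \eqref{MN}, and you would need the same input (or a lower bound via the total-mass martingale) to close your interpolation step; as stated, that bound is asserted rather than proved. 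With that ingredient supplied, your argument goes through.
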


It is possible to quickly present a conditional proof of Theorem \ref{th:PAM}, given that
we can establish Theorems \ref{th:inf} and \ref{th:1}. Therefore, let us dispense with the proof 
of Theorem \ref{th:PAM} first, conditionally on Theorems \ref{th:inf} and \ref{th:1}.
Then, we proceed to establish Theorem \ref{th:1}; that effort takes up the rest of this section. 
Theorem \ref{th:inf} will be proved subsequently.

\begin{proof}[Conditional proof of Theorem \ref{th:PAM}]%\label{rem:ID}
	If the initial data is $u_0\equiv 1$, then we combine Theorems
	\ref{th:inf} and \ref{th:1} in order to see that
	\[
		-\lambda = \lim_{t\to\infty} t^{-1} \sup_{x\in\T}\log u(t\,,x) 
		=\lim_{t\to\infty} t^{-1} \inf_{x\in\T}\log u(t\,,x), 
	\]
	almost surely, and $\lambda>0$ is non random. This yields \eqref{lim},
	provided that $u_0\equiv 1$.
	
	Next, suppose $u_0$ is a non-zero constant, say $\varkappa\neq1$. 
	Since $\varkappa^{-1}u$ solves \eqref{SHE} started identically from 1 [with $\sigma(z)=Qz$],
	the first portion of the proof implies 
	that $u$ satisfies \eqref{lim}, and the exponent $\lambda$ does not depend on $\varkappa$.

	Finally, let us suppose $u$ solves \eqref{SHE} with $\sigma(z)=Qz$ for all $z$ and
	$0<\inf_{\T} u_0\le \sup_{\T}u_0<\infty$.
	In accord with the comparison theorem for SPDEs
	(see for example Shiga \cite{Shiga}), 
	\[
		\underline{u} \le u \le \overline{u}\qquad\text{almost surely,}
	\]
	where $\underline{u}$
	and $\overline{u}$ respectively solve $\partial_t v = \partial^2_x v + Qv\dot{W}$ 
	with constant initial profiles
	$\underline{\varkappa}=\inf_\T u_0$ and 
	$\overline{\varkappa}=\sup_\T u_0$. Apply the preceding portions of the proof
	respectively to $\underline{u}$ and $\overline{u}$ in order
	to see that $u$ satisfies \eqref{lim},
	and the limiting exponent $\lambda$ does not depend on the initial data $u_0\in L^\infty(\T)$ 
	provided  that $\inf_\T u_0>0$. This completes the proof.
\end{proof}

The remainder of this section is devoted to proving Theorem \ref{th:1}.

In the case that \eqref{SHE} is replaced by the parabolic Anderson model on $(0\,,\infty)\times\Z^d$,
with $\partial^2_x$ replaced by the discrete Laplacian,
Carmona and Molchanov \cite{CM}  and Cranston, Mountford, and Shiga 
\cite{CMS2002} have shown that $\lim_{t\to\infty} t^{-1}\log u(t\,,x)$
exists a.s.\ for every $x\in\Z^d$. That is a quite similar result to the one announced in Theorem \ref{th:1},
but the results differ in two ways: First, Theorem \ref{th:1} is a statement about uniform convergence
and is not a pointwise assertion; and also significantly, unlike the previous
approaches of \cite{CM,CMS2002}, ours
cannot rely on a Feynman-Kac formulation of the solution since the Feynman-Kac expectation blows up
when the space variable is continuous. Instead, we use comparison arguments.
Still, as was done earlier in \cite{CM,CMS2002}, we prove the existence of a limit 
by appealing to Kingman's subadditive ergodic theorem for continuous-time processes;
see Kingman \cite[Theorem 4]{Kingman73}. 
Because the application of Kingman's theorem
in continuous time requires some care, as compared with the earlier discrete-time version of the
ergodic theorem of Kingman 
\cite{Kingman68}, we begin with a somewhat more general measure-theoretic discussion.

Let $(\Omega\,,\F,\P)$ be a complete probability space, and for every $t\ge0$ consider a mapping
$\vartheta_t:\Omega\to\Omega$ such that: 
\begin{compactenum}
	\item $\vartheta_0\omega=\omega$ for every $\omega\in\Omega$;
	\item $\vartheta_{t+s}=\vartheta_t\circ\vartheta_s$ for every $s,t\ge0$; and 
	\item Every $\vartheta_t$ preserves the measure $\P$;
		that is every $A\in\F$ has the same $\P$-measure as $\vartheta_t^{-1}A$
		for every $t\ge0$. 
\end{compactenum}
Then we say that $\vartheta=\{\vartheta_t\}_{t\ge0}$ is a \emph{measure-preserving semigroup}.

The following is  a continuous-time form of the Kingman subadditive ergodic theorem that has easy-to-check
conditions, but is otherwise a consequence of Kingman's original result
\cite[Theorem 4]{Kingman73} for continuous-time 
subadditive processes.\footnote{Kingman's theorem \cite[Theorem 4]{Kingman73} includes an additional technical condition
that, in the present context, translates to the assumption that $\inf_{t>0} t^{-1}\E X_t>-\infty$.
It is easy to see from Kingman's argument, however, that this condition is needed only in order
to prove that $\lim_{t\to\infty} t^{-1} X_t$ holds in $L^1(\Omega)$. We do not require it here,
as we are not interested in $L^1(\Omega)$-convergence.
}

\begin{proposition}\label{pr:Kingman}
	Let $\vartheta$ be as above and $X=\{X_t\}_{t\ge0}$ denote a real-valued stochastic process 
	that satisfies $X_{t+s}\le X_t+X_s\circ\vartheta_t$ a.s.\ for every $s,t\ge0$,
	as well as the following: There exist $c,a,b>0$ and $k>1$ such that
	\begin{equation}\label{eq:K12}
		\E\left( |X_t - X_s|^k\right)\le c |t-s|^{1+a}
		\quad\text{and}\quad
		\sup_{u\in[0,1]} \E\left( |X_u\circ \vartheta_r - X_u|^k\right) 
		\le c r^{1+b},
	\end{equation}
	uniformly for all $s,t,r\in[0\,,1]$. Then,
	$\lim_{t\to\infty} t^{-1}X_t = \inf_{n\in\N}\E( n^{-1}X_n\mid\mathcal{I})$ exists
	and is in $[-\infty\,,\infty)$ a.s.,
	where  $\mathcal{I}$ denotes the invariant $\sigma$-algebra
	$\{A\in\F:\, A=\vartheta_1^{-1}A\}$.
\end{proposition}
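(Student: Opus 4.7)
My plan is to first establish the result along integer times via Kingman's discrete-time subadditive ergodic theorem \cite{Kingman68}, and then bridge to arbitrary real $t$ by exploiting the two $L^k$-type bounds in \eqref{eq:K12} to control the fluctuations of $X$ on intervals of length at most $1$. The semigroup property forces $\vartheta_n = \vartheta_1 \circ \cdots \circ \vartheta_1$, so $Y_n := X_n$ is a discrete subadditive process for the measure-preserving map $\vartheta_1$, and the moment condition implies $\E |X_1| < \infty$ (granted $X_0 \in L^k$, as is customary in the intended application). Hence the discrete Kingman theorem yields
\[
    \lim_{n \to \infty} \frac{X_n}{n} = \inf_{n \in \N} \E\!\left(\frac{X_n}{n} \,\Big|\, \mathcal{I}\right) =: L \in [-\infty, \infty) \quad \text{a.s.}
\]

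To promote this to a continuous-time limit, write $t = n + \tau$ with $n = \lfloor t \rfloor$ and $\tau \in [0,1)$, and apply the subadditivity hypothesis once to $t = n + \tau$ and once to $n+1 = t + (1-\tau)$, obtaining the two-sided bound
\[
    X_{n+1} - X_{1-\tau}\circ\vartheta_t \;\le\; X_t \;\le\; X_n + X_\tau\circ\vartheta_n.
\]
Dividing by $t$ and combining with the discrete limit, the assertion $t^{-1} X_t \to L$ a.s.\ reduces to showing $n^{-1} M_n \to 0$ and $n^{-1} N_n \to 0$ a.s., where
\[
    M_n := \sup_{\tau \in [0,1]} |X_\tau \circ \vartheta_n|, \qquad N_n := \sup_{(\tau, r) \in [0,1]^2} |X_{1-\tau}\circ \vartheta_{n+r}|.
\]
Because $\vartheta_n$ preserves $\P$, one has $M_n \stackrel{d}{=} M_0$ and $N_n \stackrel{d}{=} N_0$, so once $\E M_0^k + \E N_0^k < \infty$ is established, Chebyshev's inequality and the first Borel--Cantelli lemma, combined with $k > 1$, deliver the desired a.s.\ decay.

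Finiteness of $\E M_0^k$ follows immediately from the first bound in \eqref{eq:K12} by the standard Kolmogorov chaining. The serious obstacle is $\E N_0^k < \infty$, which requires studying the two-parameter field $\Xi(\tau, r) := X_{1-\tau} \circ \vartheta_r$ on $[0,1]^2$. For $r_1 \ge r_2$ I would split the increment as
\[
    \Xi(\tau_1, r_1) - \Xi(\tau_2, r_2) = \bigl[X_{1-\tau_1} - X_{1-\tau_2}\bigr] \circ \vartheta_{r_1} + \bigl[X_{1-\tau_2} \circ \vartheta_{r_1 - r_2} - X_{1-\tau_2}\bigr] \circ \vartheta_{r_2},
\]
and use measure preservation to reduce the $L^k$-norms of the two summands to the two bounds of \eqref{eq:K12}, respectively. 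This yields a two-parameter Kolmogorov continuity estimate with Hölder exponents $(1+a)/k$ and $(1+b)/k$, and a chaining argument in two variables then gives $\E \sup_{[0,1]^2} |\Xi|^k < \infty$. The two hypotheses in \eqref{eq:K12} are tailored exactly for this decomposition: the first absorbs time increments of $X$ while the second absorbs the shift along $\vartheta$, and neither alone would control $\Xi$. Once this moment bound is in hand, the rest of the bridging argument is routine.
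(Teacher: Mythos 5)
Your argument is correct, and at its technical core it coincides with the paper's: both proofs hinge on splitting an increment of the two-parameter field $(u,r)\mapsto X_u\circ\vartheta_r$ into a pure time increment and a pure shift increment, each controlled by measure preservation together with one of the two bounds in \eqref{eq:K12}, and then feeding the result into a two-parameter Kolmogorov continuity/chaining argument to get an integrable supremum over a unit square. The difference lies in how this estimate is used. The paper forms the Kingman-type two-parameter process $x_{s,t}=X_{t-s}\circ\vartheta_s$, verifies the integrability hypothesis $\sup_{0<s<t<1}|x_{s,t}|\in L^1(\Omega)$, and then cites Kingman's continuous-time Theorem 4 as a black box; you invoke only the discrete-time theorem for the skeleton $\{X_n\}_{n\in\N}$ and re-derive the continuous-time extension by hand via the sandwich $X_{n+1}-X_{1-\tau}\circ\vartheta_t\le X_t\le X_n+X_\tau\circ\vartheta_n$ together with $n^{-1}M_n\to0$ and $n^{-1}N_n\to0$. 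Your route is more self-contained (it reproduces, explicitly, the interpolation step that is buried inside Kingman's continuous-parameter proof), while the paper's buys brevity by delegating that step to the reference. Two small remarks. First, in your sandwich only the diagonal value $X_{1-\tau}\circ\vartheta_{n+\tau}$ ever occurs, so it suffices to bound the one-parameter supremum $\sup_{\tau\in[0,1]}|X_{1-\tau}\circ\vartheta_\tau|$, whose $L^k$ increments are of order $|\tau_1-\tau_2|^{(1+a\wedge b)/k}$ with $1+a\wedge b>1$; this lets you use the one-dimensional Kolmogorov theorem and sidestep the genuinely two-parameter chaining, which is the one delicate point (for very small $a,b$ and $k$ close to $1$) in both your argument and the paper's. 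Second, as you note, both proofs tacitly require one anchor value such as $X_0$ to lie in $L^k$ so that the various suprema are integrable and the conditional expectations in the statement make sense; this is harmless in the intended application, where $X_0=\log S_0=0$.
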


We include a proof for the sake of completeness.

\begin{proof}
	Let $x_{s,t} = X_{t-s}\circ \vartheta_s$ for all $0<s<t$, and observe that
	the two-parameter process $\{x_{s,t};\, 0<s<t<\infty\}$ is subadditive in the sense
	of Kingman \cite{Kingman73}. Clearly, \eqref{eq:K12} implies that
	\[
		\E\left( |X_t\circ\vartheta_s - X_h\circ\vartheta_s|^k\right)
		\le c |t-h|^{1+a}
		\quad\text{and}\quad
		\E\left( |X_u\circ\vartheta_s - X_u\circ\vartheta_r|^k\right) 
		\le c|s-r|^{1+b},
	\]
	uniformly for all $u,t,h,s,r\in[0\,,1]$.
	These bounds, and a suitable form of Kolmogorov's 
	continuity theorem \cite[Appendix C]{cbms}
	together ensure that the two-parameter
	process $\{x_{s,t};\, 0<s<t\}$ is continuous [up to a modification] and satisfies
	$\sup_{0<s<t<1}|x_{s,t}|\in L^1(\Omega)$.
	The proposition follows from
	Theorem 4 of Kingman \cite{Kingman73}.
\end{proof}

We now prepare to begin the proof of Theorem \ref{th:1}. Before we start, we 
need to deal with some
measure-theoretic issues first.

Recall that a two-parameter stochastic process $W=\{W(t\,,x)\}_{t\ge0,x\in\T}$ is a two-parameter
\emph{Brownian sheet} if $W$ is a centered Gaussian process and 
\[
	\Var\left(\int_{(0,t)\times\T}\phi\,\d W\right) = \int_0^t\d s\int_{\T}\d y\
	[\phi(s\,,y)]^2\qquad\text{for all $\phi\in L^2(\R_+\times\T)$ and $t>0$},
\]
where the integral $\int\phi\,\d W$ on the left-hand side denotes the Wiener integral that is associated
to the Brownian sheet $W$ (see Nualart \cite[Chapter 1]{Nualart}), and the integral on the 
right-hand side is Lebesgue's. It is well known that
$W$ has an almost surely continuous modification on $\R_+\times\T$. In this 
way, we may define the white noise $\dot{W}$ as the distributional space-time derivative of $W$;
that is, $\dot{W} = \partial_t\partial_xW$; see \v{C}entsov \cite{Centsov}.

Let $\Omega = C(\R_+\times\T)$ and endow $\Omega$ with its usual compact-open topology and 
associated Borel sigma-algebra $\F$. 
Let $\P$ denote the law of a two-parameter Brownian sheet.
Since the Brownian sheet is a.s.\ continuous, we may realize $\P$ as a probability measure on $\Omega$.
We may, and will, assume without loss of generality that
$\F$ is $\P$-complete. 

Let $W(t\,,x)(\omega) = \omega(t\,,x)$ for every $(t\,,x)\in\R_+\times\T$ and $\omega\in\Omega$;
that is $W(\omega)=\omega$ denotes the coordinate function on $C(\R_+\times\T)$.
Then, $W$ is a particular construction of a two-parameter Brownian sheet under the measure $\P$.
We may also introduce a measure-preserving semigroup $\vartheta=\{\vartheta_t\}_{t\ge0}$ 
on $(\Omega\,,\F,\P)$ as follows:
\begin{equation}\label{vartheta}
	(\vartheta_t\omega)(s\,,x) = \omega(t+s\,,x)-\omega(t\,, x)  \qquad\text{for all $s,t\ge0$,
	$x\in\T$, and $\omega\in\Omega$}.
\end{equation}
The following shows that the one-parameter stochastic process
$t\mapsto\log\|u(t)\|_{L^\infty(\T)}=\log S_t$ 
[see \eqref{IS}] satisfies the integrability property \eqref{eq:K12}
of our formulation of Kingman's subadditive ergodic theorem (Proposition \ref{pr:Kingman}).
Since Theorem \ref{th:1} can be proved
on any probability space, including our particular construction of $(\Omega\,,\F,\P)$,
Lemma \ref{lem:mod:S} reduces the proof of Theorem \ref{th:1} to the proof of
subadditivity property $X_{t+s}\le X_t+ X_s\circ\vartheta_t$, to which we return once we
verify the following.

\begin{lemma}\label{lem:mod:S}
	Suppose $u_0\equiv1$ and
	let  $X_t=\log S_t$ for every $t\ge0$. Then,
	for every $\alpha\in(0\,,1/8)$ and $k>1$ there exists a real number $c=c(k\,,\alpha)>0$
	such that
	\[
		\E\left( |X_t-X_s|^k\right) \le c |t-s|^{k\alpha}
		\quad\text{and}\quad
		\sup_{u\in[0,1]}\E\left(|X_u\circ\vartheta_r - X_u|^k \right)\le c 
		r^{k\alpha},
	\]
	uniformly for all $s,t,r\in(0\,,1)$.
\end{lemma}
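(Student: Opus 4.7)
The plan is to reduce both estimates for $X_t = \log S_t$ to estimates for $S_t$ itself via the elementary inequality $|\log a - \log b| \le |a-b|(a^{-1}+b^{-1})$ for $a,b>0$, then to combine an $L^{2k}(\Omega)$ H\"older-in-time bound for $S_t$ with uniform positive and negative moment bounds for $S_t$ via Cauchy--Schwarz. The common starting point is the mild formulation
\[
	u(t,x) = 1 + Q\int_{(0,t)\times\T} p_{t-s}(x,y)\, u(s,y)\, W(\d s\,\d y),
\]
in which $u_0\equiv1$ yields $(P_tu_0)(x)\equiv1$.

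For the moment bounds, a Picard/BDG iteration combined with the heat-kernel estimate $\int_0^t\!\int_\T p_{t-s}(x,y)^2\,\d y\,\d s \lesssim t^{1/2}$ and Gronwall gives $\sup_{(t,x)\in[0,1]\times\T}\|u(t,x)\|_k<\infty$ for every $k\ge1$, and a Kolmogorov--Chentsov argument in $x$ lifts this to $\sup_{t\in[0,1]}\|S_t\|_k<\infty$. For the negative moments I would use $\|S_t^{-1}\|_k\le\|u(t,x_0)^{-1}\|_k$ for any fixed $x_0\in\T$, together with the fact that for linear $\sigma$ and a positive constant initial datum the parabolic Anderson equation admits finite negative moments of all orders, uniformly on compact time intervals. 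This negative-moment input is the step I expect to be the main obstacle and is the place where linearity of $\sigma$ is essential.

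For H\"older continuity, splitting $u(t,x)-u(s,y)$ in mild form and applying BDG with standard heat-kernel difference estimates yields $\|u(t,x)-u(s,y)\|_k\lesssim|t-s|^{1/4-\varepsilon}+|x-y|^{1/2-\varepsilon}$ for every $\varepsilon>0$. With $f(x):=u(t,x)-u(s,x)$, the triangle inequality produces both $\|f(x)\|_k\lesssim|t-s|^{1/4-\varepsilon}$ and $\|f(x)-f(y)\|_k\lesssim|x-y|^{1/2-\varepsilon}$, so interpolation at exponent $1/2$ gives
\[
	\|f(x)-f(y)\|_k\lesssim|t-s|^{1/8-\varepsilon}\,|x-y|^{1/4-\varepsilon}.
\]
A Garsia--Rodemich--Rumsey chaining argument in $x$ (valid for $k$ large, which suffices) then yields $\|S_t-S_s\|_k\le\|\sup_x|f(x)|\|_k\lesssim|t-s|^{1/8-\varepsilon}$. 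The first claimed inequality follows from $\E|X_t-X_s|^k\le\|S_t-S_s\|_{2k}^k\cdot\|S_t^{-1}+S_s^{-1}\|_{2k}^k$.

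For the shift bound, $X_u\circ\vartheta_r=\log\|v(u)\|_{L^\infty(\T)}$, where $v$ solves \eqref{SHE} with noise $\vartheta_rW$ and $v(0)\equiv1$. A change of variables identifies $v(u,\cdot)$ with $\tilde v(u+r,\cdot)$, where $\tilde v$ solves \eqref{SHE} on $[r,u+r]$ driven by $W$ with $\tilde v(r)\equiv1$. By the Markov property of $u$, $u(u+r,\cdot)$ is the solution on the same interval driven by the same noise but started from $u(r,\cdot)$ at time $r$. The difference $D(t,x):=\tilde v(t,x)-u(t,x)$ then satisfies the homogeneous linear SPDE with initial datum $D(r,\cdot)=1-u(r,\cdot)$, so BDG, Gronwall, and a further Kolmogorov--Chentsov step in $x$ yield
\[
	\bigl\|\sup_{x}|v(u,x)-u(u+r,x)|\bigr\|_k \lesssim \bigl\|\sup_{y}|u(r,y)-1|\bigr\|_k \lesssim r^{1/8-\varepsilon},
\]
the last bound being the previous paragraph applied with $s=0$, $t=r$. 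Combined with $\|S_u-S_{u+r}\|_{2k}\lesssim r^{1/8-\varepsilon}$ from the H\"older estimate, this produces $\bigl\|\|v(u)\|_{L^\infty(\T)}-S_u\bigr\|_{2k}\lesssim r^{1/8-\varepsilon}$, and the shift estimate follows by the same log-trick.
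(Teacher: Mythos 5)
Your proposal is correct and follows essentially the same route as the paper: the reduction via $|\log a-\log b|\le|a-b|(a^{-1}+b^{-1})$ and Cauchy--Schwarz, uniform negative moments (the paper cites Mueller--Nualart for $\E[(\inf_x u(t,x))^{-2k}]$, which covers your weaker pointwise requirement), and the interpolation $\min(|t-s|^{1/4},|x-y|^{1/2})\le|t-s|^{1/8}|x-y|^{1/4}$ followed by chaining in $x$, which is exactly where the exponent $1/8$ originates (the paper leaves this implicit in its appeal to Kolmogorov's continuity theorem). The only organizational difference is the shift estimate: you factor through the intermediate field $u(u+r,\cdot)$ and run a Gronwall comparison of two solutions on $[r,u+r]$ with initial data $1$ and $u(r,\cdot)$, whereas the paper directly decomposes $u(t,x)\circ\vartheta_s-u(t,x)$ into five stochastic integrals $q_1,\dots,q_5$ and closes with Gronwall to get the same $O(r^{1/4})$ pointwise bound before chaining --- these are equivalent computations packaged differently.
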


\begin{remark}
	It is possible to refine the forthcoming argument to prove that the above in fact holds
	for every $\alpha\in(0\,,1/4)$; one uses H\"older's inequality in 
	\eqref{log-log} in place of the Cauchy-Schwarz inequality as is done here. We omit the 
	elementary details as we do not need the improvement.
\end{remark}

\begin{proof}
	Choose and fix a real number $k\ge2$. We start the proof by developing
	a preliminary estimate; see \eqref{:u-u:} below.
	
	Recall the following basic estimates from the literature
	\cite[Exercise 3.7, p.\ 323]{Walsh}:
	\begin{equation}\label{eq:mod-in-t}
		\|u(t\,,x)\|_k\lesssim1
		\quad\text{and}\quad
		\|u(t\,,x) - u(s\,,y)\|_k
		\lesssim\left( |s-t|^{1/4} + |x-y|^{1/2}\right),
	\end{equation}
	both valid uniformly for all $(s\,,y),(t\,,x)\in(0\,,1)\times\T$.
	In light of \eqref{s:PAM}, and according to Chapter 3 of Walsh \cite{Walsh}, 
	we may write the solution $u$ to \eqref{SHE}
	in the following mild form:
	\begin{equation}\label{mild:1}
		u(t\,,x) = 1 + Q\int_{(0,t)\times\T} p_{t-r}(y-x) u(r\,,y)\,W(\d r\,\d y),
	\end{equation}
	where the latter denotes the Walsh stochastic integral \cite[Chapter 3]{Walsh}. See also \eqref{mild}.
	Choose and fix some $s>0$.
	We can see immediately from the above, and from elementary facts about the Walsh stochastic integral,
	that the $\vartheta_s$-shift of $u$ has
	a mild representation.  To see this, let us first define
        \[
                v(t\,,x) = u(t\,,x)\circ\vartheta_s\qquad\text{for all $t\ge0$ and $x\in\T$}.
        \]
      	Then, $v$ solves 
	\[
		v(t\,,x) = 1 + Q\int_{(s,s+t)\times\T} p_{t+s-r}(y-x)
		v(r-s\,,y)\, W(\d r\,\d y)\qquad\text{for all $t\ge0$ and $x\in\T$}.
	\]
        Define $W_s(\d r\,\d y)=(W\circ\vartheta_s)(\d r\,\d y)$ in order to deduce
        from the preceding that
	\begin{equation}\label{v}
		v(t\,,x) = 1 + Q\int_{(0,t)\times\T} p_{t-r}(y-x)
		v(r\,,y)\, W_s(\d r\,\d y)\qquad\text{for all $t\ge0$ and $x\in\T$}.
	\end{equation}
	Because the Brownian sheet has stationary and independent increments in its first variable (say),
	it follows that $W_s$ is a Brownian sheet. 
	Thus, we see that $v$ is the solution to \eqref{SHE} but with $\dot{W}$ replaced
	by $\dot{W}_s$. Among other things, it follows that $v$  satisfies \eqref{eq:mod-in-t}.
	Therefore, whenever $0<s < t<1$ and $x\in\T$, we may write
	\[
		\left| u(t\,,x)\circ\vartheta_s - u(t\,,x) \right|
                = |v(t\,,x)-u(t\,,x)| 
                \le Q(q_1 + q_2 + q_3 + q_4 + q_5),
	\]
	where
	\begin{align*}
		q_1 &= \left| \int_{(0,s)\times\T} p_{t-r}(y-x)u(r\,,y)\,W(\d r\,\d y)\right|,\\
		q_2 &= \left| \int_{(t,s+t)\times\T} p_{t+s-r}(y-x)v(r-s\,,y)\,W(\d r\,\d y)\right|,\\
		q_3 &= \left| \int_{(s,t)\times\T}\left\{ p_{t+s-r}(y-x) - p_{t-r}(y-x) \right\}u(r\,,y)
			\, W(\d r\,\d y)\right|,\\
                q_4 &= \left| \int_{(s,t)\times\T}p_{t+s-r}(y-x)\left\{ u(r-s\,,y) -  u(r\,,y)\right\} W(\d r\,\d y)\right| \\
                q_5 &= \left| \int_{(s, t)\times\T}p_{t+s-r}(y-x)\left\{ v(r-s \,,y) -  u(r-s \,,y)\right\} W(\d r\,\d y)\right|.
	\end{align*}
	We estimate $q_1,\ldots,q_5$ next, and in this order. 
	
	A suitable application of the Burkholder-Davis-Gundy inequality
	\cite[Proposition 4.4]{cbms} yields
	\[
		\|q_1\|_k^2 \lesssim\int_0^s\d r\int_{\T}\d y\
		[ p_{t-r}(y-x)]^2 \|u(r\,,y)\|_k^2,
	\]
	where the implied constant depends neither on $s\in(0\,,t)$ nor on $(t\,,x)\in(0\,,1)\times\T.$ 
	Therefore, we may appeal to \eqref{eq:mod-in-t} in order to obtain
	\[
		\|q_1\|_k^2 \lesssim\int_0^s\d r\int_{\T}\d y\
		[ p_{t-r}(y-x)]^2
		=\int_0^s p_{2(t-r)}(0)\,\d r = \int_{t-s}^t p_{2r}(0)\,\d r,
	\]
	thanks to the semigroup property of the heat kernel. It is well known that
	$p_\tau(0)\lesssim \tau^{-1/2}$ uniformly for all $\tau\in(0\,,2]$;
	see for example Lemma B.1 of our earlier paper
	\cite[Appendix B]{KKMS}. This yields
	\[
		\|q_1\|_k^2 \lesssim  \int_{t-s}^t \frac{\d r}{\sqrt r}
		\propto \sqrt{t} - \sqrt{t-s}\lesssim\sqrt s,
	\]
	where the implied constants are independent of $0<s<t<1$ and $x\in\T$.
	Similarly,
	\[
		\|q_2\|_k^2 \lesssim\int_t^{s+t}\d r\int_{\T}\d y\
		[ p_{t+s-r}(y-x)]^2 \|v(r-s\, , y)\|_k^2  \lesssim \int_t^{s+t} p_{2(t+s-r)}(0)\,\d r = \int_0^s p_{2r}(0)\,\d r\lesssim\sqrt s,
	\]
	and another appeal to \eqref{eq:mod-in-t} yields
	\begin{align*}
		\|q_3\|_k^2 &\lesssim \int_s^t\d r\int_{\T}\d y
			\left\{ p_{t+s-r}(y-x) - p_{t-r}(y-x) \right\}^2\|u(r\,,y)\|_k^2\\
		&\lesssim \int_0^{t-s}\d r\int_{\T}\d y
			\left\{ p_{s+r}(y-x) - p_r(y-x) \right\}^2 \lesssim\sqrt s,
	\end{align*}
	all valid uniformly for $s\in(0\,,t)$ and $(t\,,x)\in(0\,,1)\times\T.$ 
	Another appeal to \eqref{eq:mod-in-t} yields the following bound:
	\begin{align*}
		\|q_4\|_k^2 &\lesssim \int_s^t\d r\int_{\T}\d y\ [p_{t+s-r}(y-x)]^2
			\| u(r-s\,,y) -  u(r\,,y)\|_k^2\\
		&\lesssim\sqrt s\int_s^t\d r\int_{\T}\d y\ [p_{t+s-r}(y-x)]^2 = 
			\sqrt s\int_0^{t-s}\d r\int_{\T}\d y\ [p_{r+s}(y-x)]^2\lesssim \sqrt s,
	\end{align*}
	also valid uniformly for all $s\in(0\,,t)$ and $(t\,,x)\in(0\,,1)\times\T$.
        
        Finally, we estimate $q_5$. 
        As before, a suitable application of 
        the Burkholder-Davis-Gundy inequality yields
	\[ 
		\|q_5\|_k^2 \lesssim \int_s^t \d r \int_{ \T} \d y\,  
		[p_{t+s-r}(y-x)]^2 \|v(r-s, y) - u(r-s, y)\|_k^2.
	\]
	Let $w(t\,,x)=v(t\,,x)-u(t\,,x)$ and combine the preceding observation
	to find that
	\begin{align*}
		\|w(t\,,x)\|_k^2 &\lesssim \sqrt{s} +
		\int_0^t\d s \int_{\T}\d y\ 
		[p_{t-s}(y-x)]^2\|w(s\,,y)\|_k^2,
	\end{align*}
	where the implied constant does not depend on $(t\,,x)\in(0\,,1)\times\T$.
        A Gronwall-type argument as in  Walsh \cite[Lemma 3.3]{Walsh}  now yields
        the bound,
        \[
		\|w(t\,, x)\|_k^2 \lesssim \sqrt{s},
        \]
        valid uniformly for all $t\in(0\,,1)$, $s\in(0\,,t)$ and $x\in\T$.
        Thus, we have 
	\begin{equation}\label{:u-u:}
		\left\| u(t\,,x)\circ\vartheta_s - u(t\,,x) \right\|_k \lesssim s^{1/4},
	\end{equation}
	valid uniformly for all $0<s<t<1$ and $x\in\T$. This is the preliminary estimate that was alluded to
	at the beginning of the proof. We can now establish Lemma \ref{lem:mod:S}.
	
	Since $|\log b-\log a|\le |b-a| (a^{-1}+b^{-1})$ for all $b,a>0$, 
	the Cauchy-Schwarz inequality plus the triangle inequality  together yield
	\begin{equation}\label{log-log}
		\|\log A - \log B\|_k \le
		\|B-A\|_{2k} \left( \|A^{-1}\|_{2k}+\| B^{-1}\|_{2k}\right),
	\end{equation}
	valid for all strictly positive random variables $A$ and $B$. According to the method of
	Mueller and Nualart \cite{MN}, 
	\begin{equation}\label{MN}
		c_k=\sup_{t\in(0,1)}\E\left( \inf_{x\in\T}|u(t\,,x)|^{-2k}\right)<\infty.
	\end{equation}
	Therefore, we may recall \eqref{IS} and let
	$X_t=\log S_t$ for every $t>0$ in order to deduce from
	the preceding remarks that
	\begin{equation} \label{eq:difference-X}
		\left\| X_t-X_s\right\|_k
		\le 2c_k^{1/(2k)}\left\| S_t-S_s \right\|_{2k},
        \end{equation}
	uniformly for all $0<s<t<1$. Since $k\ge 2$ can be as large
	as we would like, \eqref{eq:difference-X}, \eqref{eq:mod-in-t}, and Kolmogorov's continuity theorem
	together imply that for every fixed $\alpha\in(0\,,1/8)$,
        \begin{equation}\label{XX:1}
		\left\| X_t-X_s \right\|_k \lesssim |t-s|^\alpha
		\quad\text{uniformly for all $0<s<t<1$}.
	\end{equation} 
	Here we have used the fact that
	$|S_t-S_s| \le  \sup_{x\in\T}|u(t\,,x)-u(s\,,x)|.$
	The very same argument that led us to \eqref{XX:1} shows also that
	\[
		\|X_v-X_v\circ\vartheta_r\|_k \le 2c_k^{1/(2k)}
		\left\| S_v -  S_v\circ\vartheta_r \right\|_{2k}
		\qquad\text{for every $r,v\in[0\,,1]$}.
	\]
	Since the shift $\vartheta_r:\Omega\to\Omega$
	preserves the measure $\P$, \eqref{:u-u:}, Kolmogorov's continuity theorem, 
	and \eqref{MN} together imply that
	\begin{equation} \label{eq:diff-X-2}
		\sup_{v\in(0,1)}\left\| X_v-X_v\circ\vartheta_r \right\|_k \lesssim r^\alpha
		\qquad\text{for all $r\in[0\,,1]$}.
	\end{equation}
	Finally, \eqref{eq:diff-X-2} and \eqref{XX:1} together verify the 
assertions of Lemma \ref{lem:mod:S}.
\end{proof}

We are ready for the following.

\begin{proof}[Proof of Theorem \ref{th:1}]
	As has been mentioned already,
	we can write $u$ in mild form as in \eqref{mild:1}.
	Moreover, it is well known that the $C_{>0}(\T)$-valued stochastic
	process $\{u(t)\}_{t\ge0}$ is a strong Markov process; see
	Nualart and Pardoux \cite{NualartPardoux}. Here $C_{>0}(\T)$ 
        means that the process is continuous and takes values in the positive
        real numbers.  
	
	Now choose and fix an arbitrary number $s>0$. Then, elementary properties of the Walsh
	stochastic integral imply that for all $s,t\ge0$ and $x\in\T$,
	\begin{align*}
		u(t+s\,,x) &
			= (P_t u(s))(x) + Q\int_{(s,t+s)\times\mathbb{T}} p_{t+s-r}(x\,,y)u(r\,,y) \,W(\d r\,\d y)\\
		&=(P_t u(s))(x) + Q\int_{(0,t)\times\mathbb{T}} p_{t-r}(x\,,y)u(r+s\,,y) \,W_s(\d r\,\d y),
	\end{align*}
	almost surely,
	where $W_s(\d r\,\d y)=(W\circ\vartheta_s)(\d r\,\d y)$  defines a space-time white noise
	, and $\{P_t\}_{t\ge0}$ denotes the
	heat semigroup on $\mathbb{T}$. That is, $P_0f=f$ for all $f\in C(\T)$,
	and $(P_tf)(x) = \int_{\T}p_t(x\,,y)f(y)\,\d y$ for all $t>0$ and $x\in\T$.
	
	Recall the process $S=\{S_t\}_{t\ge0}$ from \eqref{IS}.
	The comparison theorem for SPDEs (see Shiga \cite{Shiga})
	and the independence of $W_s(\d r\,\d y)$ and $u(s)$ 
        together tell us that for all $s,t\ge0$ and $x\in\T$,
	\[
		u(t+s\,,x) \le u_s(t\,,x)\quad\text{and hence}\quad
		S_{t+s}\le\sup_{x\in\mathbb{T}}u_s(t\,,x)
		\qquad\text{a.s.},
	\]
	where $u_s$ solves 
	\[
		\partial_tu_s =  \partial_x^2 u_s + 
		Qu_s\dot{W}_s\quad\text{on $(0\,,\infty)\times\mathbb{T}$},
	\]
	subject to the initial profile $u_s(0)\equiv S_s$. Because the SPDE in question is linear,
	$v_s(t\,,x) = u_s(t\,,x)/S_s$ solves the SPDE,
	\[
		\partial_t v_s = \partial_x^2 v_s + 
		Qv_s\dot{W}_s\quad\text{on $(0\,,\infty)\times\mathbb{T}$},
	\]
	subject to $v_s(0)\equiv 1$. Because this SPDE has a unique solution,
	we compare the above with \eqref{v} (and recall the uniqueness of the SPDE
	that \eqref{v} describes) in order to deduce that
	$v_s=u\circ \vartheta_s$. Thus, it follows that
	\[
		S_{t+s} \le S_s \times (S_t\circ \vartheta_s)
		\quad\Rightarrow\quad
		\log S_{t+s} \le \log S_s + \log S_t\circ\vartheta_s\qquad\text{a.s.},
	\]
	This and Lemma \ref{lem:mod:S} together imply that $\{\log S_t\}_{t\ge0}$ satisfies
	the conditions of our formulation of Kingman's subadditive ergodic theorem
	(Proposition \ref{pr:Kingman}) and hence
	\[
		-\lambda=\lim_{t\to\infty} t^{-1} \log S_t
	\]
	exists a.s.\ and is measurable with respect
	to the invariant sigma-algebra of $\vartheta_1$. By the Kolmogorov 0-1 law,
	the latter sigma-algebra is trivial; therefore, $\lambda$ is non random. In principle,
	$\lambda$ could be any extended real number in $[-\infty\,,\infty)$. However, the theory of \cite{KKMS}
	implies, in the particular case that \eqref{s:PAM} holds, that
	\[
		-\infty<\liminf_{t\to\infty} t^{-1}\inf_{x\in\T}\log u(t\,,x)\le
		\limsup_{t\to\infty} t^{-1}\sup_{x\in\T}\log u(t\,,x) < 0\qquad\text{a.s.}
	\]
	This proves that $-\infty<\lambda<0$ and completes the proof.
\end{proof}

\section{An asymptotic interpolation theorem}\label{sec:Lp}
In this section, we return to the first part of the proof of Theorem \ref{th:inf}
and prove the following asymptotic  $L^p$-interpolation theorem.
This result will be used in the proof of Theorem \ref{th:inf} afterward. 
\begin{theorem}\label{th:S/M}
	Choose and fix two extended real numbers $1\le p,q\le\infty$. Then, a.s.,
	\[
		\log  \|u(t)\|_{L^p(\T)}  - \log \|u(t)\|_{L^q(\T)} =O(\log \log t)
		\quad\text{as $t\to\infty$}.
	\]
\end{theorem}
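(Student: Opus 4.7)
The plan is to reduce Theorem \ref{th:S/M} to the extreme case $p=\infty$, $q=1$, and then compare $\|u(t)\|_{L^\infty(\T)}$ with $\|u(t)\|_{L^1(\T)}$ by exploiting the smoothing property of the heat semigroup on $\T$. The reduction is standard: since $|\T|=2<\infty$, H\"older gives $\|u\|_{L^q}\le 2^{|1/q-1/p|}\|u\|_{L^p}$ when $p\ge q$, while the log-convexity inequality $\|u\|_{L^p}\le\|u\|_{L^1}^{1/p}\|u\|_{L^\infty}^{1-1/p}$ handles the reverse direction. Together they show that both differences $\log\|u(t)\|_{L^p}-\log\|u(t)\|_{L^q}$ are dominated by the single quantity $\log\|u(t)\|_{L^\infty(\T)}-\log\|u(t)\|_{L^1(\T)}$, so it suffices to prove that this extreme difference is $O(\log\log t)$ a.s.

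For the extreme case, I would exploit the mild decomposition
\[
	u(t+\tau,x) = (P_\tau u(t))(x) + N_{t,\tau}(x),
	\qquad
	N_{t,\tau}(x)=\int_{(t,t+\tau)\times\T}p_{t+\tau-s}(x,y)\sigma(u(s,y))\,W(\d s\,\d y),
\]
with a suitably chosen $\tau\ge 1$. The spectral-gap bound $\|p_\tau(x,\cdot)-\tfrac12\|_{L^\infty(\T)}\lesssim e^{-\pi^2\tau}$ yields
\[
	\left|(P_\tau u(t))(x)-\tfrac12\|u(t)\|_{L^1(\T)}\right|\lesssim e^{-\pi^2\tau}\|u(t)\|_{L^1(\T)}
	\quad\text{uniformly in $x\in\T$,}
\]
so the smoothed part is essentially the spatial constant $\tfrac12\|u(t)\|_{L^1(\T)}$. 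Integrating the noise in $x$ gives $\|u(t+\tau)\|_{L^1(\T)}=\|u(t)\|_{L^1(\T)}+\int_\T N_{t,\tau}$, where the scalar It\^o integral $\int_\T N_{t,\tau}$ has variance at most $\lip(\sigma)^2\int_t^{t+\tau}\|u(s)\|_{L^2(\T)}^2\,\d s$. The program is then to control $\|N_{t,\tau}\|_{L^\infty(\T)}$ and $|\int_\T N_{t,\tau}|$ relative to $\|u(t)\|_{L^1(\T)}$ at discrete times $t=n\in\N$ via Burkholder--Davis--Gundy plus a Kolmogorov-type continuity in $x$, apply Borel--Cantelli to obtain almost sure bounds, and extend to continuous $t$ using the almost sure modulus of continuity in the spirit of Lemma \ref{lem:mod:S}.

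The main obstacle will be closing the bootstrap implicit in the noise estimate. Gaussian-martingale concentration yields $\P(|N_{n,\tau}(x)|>\lambda)\lesssim\exp(-\lambda^2/(C\lip(\sigma)^2M^2\tau))$ with $M=\sup_{[n,n+\tau]\times\T}u$, and hence $\|N_{n,\tau}\|_{L^\infty(\T)}\lesssim\lip(\sigma)\,M\sqrt{\tau\log n}$ a.s.\ for large $n$. Since spatial/temporal continuity only yields $M\asymp\|u(n)\|_{L^\infty(\T)}$ up to a constant, the resulting recursion
\[
	\frac{\|u(n+\tau)\|_{L^\infty(\T)}}{\|u(n+\tau)\|_{L^1(\T)}} \lesssim 1 + \sqrt{\log n}\cdot\frac{\|u(n)\|_{L^\infty(\T)}}{\|u(n)\|_{L^1(\T)}}
\]
does not naively contract: the quantity being bounded reappears on the right. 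To close this, I would iterate the decomposition over multiple time epochs so that earlier layers of smoothing $P_\tau$ damp later noise contributions, seeding the induction with the crude a priori KKMS cap $\|u(n)\|_{L^\infty(\T)}/\|u(n)\|_{L^1(\T)}\le e^{(\lambda_1-\lambda_2)n+o(n)}$ before iteratively polishing the ratio down to polylogarithmic scale. Extracting the final $O(\log\log t)$ rate --- rather than merely $O((\log t)^a)$ --- will require careful joint optimization of the lag $\tau=\tau_n$, the Borel--Cantelli moment order, and the number of iteration steps.
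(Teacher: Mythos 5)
Your reduction to the extreme case $p=\infty$, $q=1$ is fine and matches the paper. The genuine gap is in the second half: the bootstrap you describe does not close, and the reason is the choice of a \emph{macroscopic} lag $\tau\ge 1$. With $\tau\ge1$ (which you need for the spectral-gap bound $\|p_\tau(x,\cdot)-\tfrac12\|_\infty\lesssim \e^{-\pi^2\tau}$ to equilibrate the deterministic part), the noise term is proportional to $M\sqrt{\tau\log n}$ with $M\asymp\|u(n)\|_{L^\infty(\T)}$, so your recursion reads $R_{n+\tau}\lesssim 1+\sqrt{\log n}\,R_n$ for the ratio $R_n=\|u(n)\|_{L^\infty}/\|u(n)\|_{L^1}$. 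This recursion is \emph{expansive}: each step multiplies the ratio by $\sqrt{\log n}$, so iterating over more epochs, or seeding with the exponentially large KKMS cap $\e^{(\lambda_1-\lambda_2)n}$, only makes the bound worse; earlier layers of smoothing cannot retroactively damp noise that was generated while the peak was still of height $N$, because that noise already has sup-norm of order $N\sqrt{\tau}\gg N$. The same problem kills your total-mass control: with a peak of height $N$ and unit mass, $\int_n^{n+\tau}\|u(s)\|_{L^2(\T)}^2\,\d s$ is of order $N\tau$, so the mass martingale fluctuates by $\sqrt{N}\gg 1$ over a unit window --- this is exactly the ``intermittency effect'' that renders naive concentration useless.

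The paper's resolution, which is the missing idea, is to work on the \emph{mesoscopic} time scale $\tau=N^{-\gamma}$ with $\tfrac43<\gamma<2$, where $N$ is the current value of the ratio. One does not wait for the heat semigroup to equilibrate; one only needs it to tame the peak from $N$ down to $\sim(N^{-\gamma})^{-1/2}=N^{\gamma/2}\ll N$ (Proposition \ref{pr:KPZ1}), and over that short window the accumulated noise has sup-norm $\sqrt{k}\,N\,(N^{-\gamma})^{1/4}\asymp N^{\gamma/2}$ as well, with failure probability $\exp(-N^{(3\gamma-4)/2})$, while the quadratic variation of the mass martingale is only $N^{-(3\gamma-4)/3}$ (after conditioning on the peak and the mass staying controlled), so the total mass stays within a factor of $2$ (Proposition \ref{pr:valleys}). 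The argument is then restarted via the strong Markov property at the stopping time where the ratio first reaches $(\log n)^{\beta/2}$, and the mesoscopic estimate is iterated $\sim N^{\gamma}\e^{N^{2/\beta}}$ times to cover $[0,n]$; the sum of failure probabilities converges precisely because $(3\gamma-4)/3>2/\beta$. Without replacing your fixed lag $\tau\ge1$ by this $N$-dependent mesoscopic scale, I do not see how to make your outline contract.
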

Jensen's inequality implies that Theorem \ref{th:S/M} is an equivalent
formulation of \eqref{eq:Lpq}.
Our proof of Theorem \ref{th:S/M} hinges on the following simple lemma.

\begin{lemma}\label{lem:WLOG}
        Theorem \ref{th:S/M} holds provided that, for every $\beta>3$,
	\begin{equation}\label{cond:WLOG}
		\sum_{n=1}^\infty
		\P\left\{ \sup_{0\le t\le n}\frac{\|u(t)\|_{L^\infty(\T)}}{\|u(t)\|_{L^1(\T)}}
		> (\log n)^\beta \right\} <\infty.
	\end{equation}
\end{lemma}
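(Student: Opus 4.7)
The plan is a two-step reduction: an elementary $L^p$-comparison on the finite-measure space $\T$, followed by the first Borel--Cantelli lemma. First I would reduce the statement of Theorem \ref{th:S/M} to controlling the single extreme ratio
\[
M_t := \frac{\|u(t)\|_{L^\infty(\T)}}{\|u(t)\|_{L^1(\T)}}.
\]
Since $|\T|=2$, for all $1\le r\le s\le\infty$ one has $\|f\|_{L^r(\T)}\le 2^{1/r-1/s}\|f\|_{L^s(\T)}$. Applied to the four pairs $(r,s)\in\{(1,p),(1,q),(p,\infty),(q,\infty)\}$, this gives, with universal implicit constants,
\[
\|u(t)\|_{L^1(\T)} \lesssim \|u(t)\|_{L^p(\T)},\ \|u(t)\|_{L^q(\T)} \lesssim \|u(t)\|_{L^\infty(\T)}.
\]
Taking logarithms and combining the two lower and two upper bounds yields
\[
\bigl| \log \|u(t)\|_{L^p(\T)} - \log \|u(t)\|_{L^q(\T)} \bigr| \le \log M_t + O(1),
\]
so it is enough to show that $\log M_t = O(\log\log t)$ almost surely as $t\to\infty$.

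Next I would fix any $\beta>3$ and apply the first Borel--Cantelli lemma directly to the hypothesis \eqref{cond:WLOG}. The assumed summability forces that, almost surely, the event $\{\sup_{0\le t\le n} M_t > (\log n)^\beta\}$ occurs for only finitely many $n\in\N$; hence there exists a random integer $N$ such that
\[
\sup_{0\le t\le n} M_t \le (\log n)^\beta \qquad \text{for every } n\ge N.
\]
For any real $t\ge N$, setting $n:=\lceil t\rceil$ gives $M_t \le (\log\lceil t\rceil)^\beta$, and hence $\log M_t \le \beta\log\log t + o(1)$, which is the advertised $O(\log\log t)$ bound. Combined with the first step, this proves Theorem \ref{th:S/M}.

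There is no serious obstacle inside Lemma \ref{lem:WLOG} itself: the statement is designed precisely to isolate \eqref{cond:WLOG} as the only genuinely nontrivial ingredient. The reduction uses only two routine facts, namely finite-measure $L^p$-comparison and one invocation of Borel--Cantelli with any fixed $\beta>3$. The real work, to be carried out in the subsequent sections of the paper, lies in verifying \eqref{cond:WLOG}, which will require moment and maximal-type estimates for $M_t$ on time windows $[0,n]$; that estimate is the substantive step that this lemma cleanly separates off.
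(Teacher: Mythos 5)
Your proof is correct and follows essentially the same route as the paper's: Borel--Cantelli applied to \eqref{cond:WLOG}, passage from integer $n$ to continuous $t$ via $\lceil t\rceil$ (the paper uses $[t]+1\le 2t$), and the trivial reverse bound $\|u(t)\|_{L^1(\T)}\le 2\|u(t)\|_{L^\infty(\T)}$. Your explicit finite-measure H\"older comparison for general $p,q$ is just a spelled-out version of the Jensen-inequality remark the paper makes right after stating Theorem \ref{th:S/M}.
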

\begin{proof}
        According to \eqref{cond:WLOG} and the Borel-Cantelli lemma,
        with probability one,
	\begin{equation}  \label{eq:Borel-Cantelli}
		\sup_{0\le t\le n}\frac{\|u(t)\|_{L^\infty(\T)}}{\|u(t)\|_{L^1(\T)}}
		\le (\log n)^\beta \qquad\text{for all but a finite number of $n\in\N$}.
	\end{equation}
        We would like to replace $n$ in \eqref{eq:Borel-Cantelli} by a 
        continuous variable. Note that for $t\ge 1$ we have $ ([t]+1) \le 2t$
        where $[t]$ is the greatest integer in $t$.  So 
        \eqref{eq:Borel-Cantelli} implies that with probability 1,
	\[
		\frac{\|u(t)\|_{L^\infty(\T)}}{\|u(t)\|_{L^1(\T)}}
		\le \left( \log( [t]+1) \right)^\beta \leq  \left( \log t + \log 2 \right)^\beta 
		\qquad\text{for all sufficiently large $t>0$}.
	\]
	In other words, for every $\beta>3$
	\[
		\frac{ \log  \|u(t)\|_{L^\infty(\T)}  - \log \|u(t)\|_{L^1(\T)}}{\log\log t} \leq \beta+1 
		\quad\text{as $t\to\infty$ a.s.}
	\]
	Moreover, the inequality can be reversed since $\|u(t)\|_{L^1(\T)}\le 2\|u(t)\|_{L^\infty(\T)}$.
\end{proof}

The remainder of this section is devoted to the estimation of the probability
term in \eqref{cond:WLOG}. In order to do that, we must
overcome two challenges:
\begin{enumerate}
\item First, let us consider the non-random case $[\sigma\equiv0]$
	and take advantage of the following elementary property of the  heat semigroup
	$P=\{P_t\}_{t\ge0}$,
	defined earlier in \eqref{P}:
	\emph{$P$ tames very tall, thin peaks}. Thus, for example, 
	if $u_0\in C_+(\T)$ has a given area ---
	say $\|u_0\|_{L^1(\T)}=1$ ---
	and a much larger maximum ---
	say $\|u_0\|_{L^\infty(\T)}=N\gg1$ ---
	then $\|P_t u_0\|_{L^\infty(\T)} \ll N$ for relatively small values of $t$. Our first
	challenge is to show
	that the random heat operator $u\mapsto \partial_t u - \partial^2_x u - \sigma(u)\dot{W}$ 
	preserves essentially this taming property with high probability. 
	The details of this argument can be found in \S\ref{subsec:peaks} below.
\item Our second challenge is to prove that, with high probability,
	the total mass process $t\mapsto \|u(t)\|_{L^1(\T)}$
	does not get too big  or too small   in ``mesoscopic time,'' especially
	important when the initial data $u_0$ has very tall peaks in the sense of the previous paragraph. 
	This endeavor requires the simultaneous control of the total mass and the maximum of $u(t)$. 
	See \S\ref{subsec:total-mass} below for details.
\end{enumerate}
Once these challenges are met, we appeal to the strong Markov property 
of the infinite-dimensional process $\{u(t)\}_{t\ge0}$ in order to 
complete the proof of Theorem \ref{th:S/M}. This can be done relatively
effortlessly; see \S\ref{subsec:Pf:Th:S/M}.

\subsection{Control of tall peaks}\label{subsec:peaks}

We begin to work toward addressing our first challenge, mentioned in the preamble 
to this section.
Throughout, we denote the stochastic integral in \eqref{mild}
by $\mathcal{I}(t\,,x)$. That is,
\begin{equation}\label{I}
	\mathcal{I}(t\,,x)  = u(t\,,x) - (P_tu_0)(x) =\int_{(0,t)\times\T}
	p_{t-s}(x\,,y) \sigma(u(s\,,y))\, W(\d s\,\d y),
\end{equation}
for all $t>0$ and $x\in\T$. 

\begin{lemma}\label{lem:tp1}
	For every $T,\varpi>0$ there exists a real number $c = c(T,\varpi)>0$ such that
	\[
		\E\left( | \mathcal{I}(t\,,x) - \mathcal{I}(s\,,y) |^k\right) \le
		(ck)^{k/2}\exp( c k^3(s\vee t))\|u_0\|_{L^\infty(\T)}^k\left[ \sqrt{|t-s|} + |x-y| \right]^{k/2},
	\]
	uniformly for all Lipschitz-continuous functions $\sigma:\R\to\R$ that satisfy
	$\lip(\sigma)\le\varpi$,  $u_0\in L^\infty(\T)$, $x,y\in\T$, $0\le s,t\le T$, and $k\ge2$.
\end{lemma}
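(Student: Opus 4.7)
The plan is to combine a $k$-th moment Burkholder--Davis--Gundy estimate on $\mathcal{I}$ with the standard a priori moment bound on $u$ and a classical $L^2$ bound on the increment of the heat kernel. Rewriting the increment as a single Walsh stochastic integral,
\[
	\mathcal{I}(t\,,x) - \mathcal{I}(s\,,y) = \int_{(0,s\vee t)\times\T}\bigl[ p_{t-r}(x\,,z)\mathbb{1}_{\{r<t\}} - p_{s-r}(y\,,z)\mathbb{1}_{\{r<s\}}\bigr]\sigma(u(r\,,z))\,W(\d r\,\d z),
\]
the Burkholder--Davis--Gundy inequality (for instance \cite[Proposition 4.4]{cbms}) yields, for every $k\ge2$,
\[
	\|\mathcal{I}(t\,,x) - \mathcal{I}(s\,,y)\|_k^2 \leq c\,k\int_0^{s\vee t}\d r\int_\T\d z\,\bigl[ p_{t-r}(x\,,z)\mathbb{1}_{\{r<t\}} - p_{s-r}(y\,,z)\mathbb{1}_{\{r<s\}}\bigr]^2\|\sigma(u(r\,,z))\|_k^2,
\]
with $c$ an absolute constant.

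The standing hypothesis $\sigma(0)=0$ together with $\lip(\sigma)\le\varpi$ gives $|\sigma(u)|\le\varpi|u|$. Combined with the standard a priori moment bound
\[
	\sup_{x\in\T}\|u(r\,,x)\|_k^2 \leq c\|u_0\|_{L^\infty(\T)}^2\,\exp(ck^2 r),
\]
obtained from BDG and a Gronwall-type argument on the mild form \eqref{mild} in the spirit of Walsh \cite[Chapter 3]{Walsh}, and uniform over all $\sigma$ with $\lip(\sigma)\le\varpi$ and $\sigma(0)=0$, this produces the pointwise estimate $\|\sigma(u(r\,,z))\|_k^2 \le c\|u_0\|_{L^\infty(\T)}^2\exp(ck^2(s\vee t))$ valid for every $r\le s\vee t\le T$ and $z\in\T$. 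This constant can then be pulled out of the double integral above.

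The last ingredient is the deterministic heat-kernel regularity bound
\[
	\int_0^{s\vee t}\d r\int_\T\d z\,\bigl[ p_{t-r}(x\,,z)\mathbb{1}_{\{r<t\}} - p_{s-r}(y\,,z)\mathbb{1}_{\{r<s\}}\bigr]^2 \leq c\bigl(\sqrt{|t-s|}+|x-y|\bigr),
\]
valid uniformly for $s,t\in[0\,,T]$ and $x,y\in\T$. This is of the same flavor as the computation underlying \eqref{eq:mod-in-t}, and follows by splitting the bracketed expression into a pure temporal increment and a pure spatial increment, and then invoking the semigroup identity together with Parseval's identity on the torus to control $\int_\T|p_{\tau+h}(w)-p_\tau(w)|^2\d w$ and $\int_\T|p_\tau(x-w)-p_\tau(y-w)|^2\d w$ separately. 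Putting the three ingredients together gives
\[
	\|\mathcal{I}(t\,,x) - \mathcal{I}(s\,,y)\|_k^2 \leq c\,k\,\|u_0\|_{L^\infty(\T)}^2\,\exp(ck^2(s\vee t))\bigl[\sqrt{|t-s|}+|x-y|\bigr],
\]
and raising to the $k/2$ power produces the claim; the factor $(ck)^{k/2}$ arises from the $ck$ prefactor in BDG, and $\exp(ck^3(s\vee t))$ emerges as $[\exp(ck^2(s\vee t))]^{k/2}$.

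I expect the main technical obstacle to be the heat-kernel regularity estimate in the last display, since both spatial and temporal increments must be controlled simultaneously and uniformly in $T$; the remainder is essentially bookkeeping, with the cubic growth in $k$ inside the exponential arising naturally from exponentiating a quadratic-in-$k$ moment bound to the $k/2$ power.
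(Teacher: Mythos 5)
Your proposal is correct and follows essentially the same route as the paper's proof: a Burkholder--Davis--Gundy bound on the Walsh integral, the a priori moment estimate $\sup_{x}\|u(r\,,x)\|_k\lesssim\exp(ck^2r)\|u_0\|_{L^\infty(\T)}$ (which, as you note, requires the standing hypothesis $\sigma(0)=0$ to write $|\sigma(u)|\le\varpi|u|$), and deterministic $L^2$ increment bounds for the heat kernel on $\T$. The only difference is cosmetic --- you package the increment as a single stochastic integral and split the kernel afterwards, whereas the paper applies the triangle inequality to separate the temporal and spatial increments first and controls each via the heat-kernel lemmas of the appendix of \cite{KKMS}.
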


Lemma \ref{lem:tp1} is not a result about the solution to
\eqref{SHE} for a fixed diffusion coefficient $\sigma$; rather, it is a statement that
holds uniformly over all solutions to \eqref{SHE} for which the diffusion coefficient
satisfies $\lip(\sigma)\le\varpi$ and the initial profile is in $L^\infty(\T)$.

\begin{proof}
	It is well known that there exists a real number $A=A(\varpi)>1$ such that
	\begin{equation}\label{eq:mom:u}
		\sup_{x\in\T}\| u(t\,,x)\|_k  \le A \exp(Ak^2t)\|u_0\|_{L^\infty(\T)},
	\end{equation}
	uniformly for  all Lipschitz-continuous functions $\sigma:\R\to\R$ that satisfy
	$\lip(\sigma)\le\varpi$,  $u_0\in C_+(\T)$, $t>0$, and $k\ge2$;
	see \cite[Proposition 4.1]{KKMS} and its proof.
	
	Next we write, for all  $k\ge2$, $t,h>0$, and $x\in\T$,
	\[
		\| \mathcal{I}(t+h\,,x) - \mathcal{I}(t\,,x) \|_k \le J_1 + J_2
	\]
	where
	\begin{align*}
		J_1 &= \left\| \int_{(0,t)\times\T} 
			\left[ p_{t+h-s}(x\,,y) - p_{t-s}(x\,,y)\right] 
			\sigma(u(s\,,y))\,W(\d s\,\d y)\right\|_k,\\
		J_2 &= \left\| \int_{(t,t+h)\times\T}
			p_{t+h-s}(x\,,y)\sigma(u(s\,,y))\,W(\d s\,\d y)\right\|_k.
	\end{align*}
	We may estimate $J_1$ and $J_2$ in turn using the Burkholder-Davis-Gundy inequality as follows;
	see Khoshnevisan \cite[Proposition 4.4]{cbms} for the details of the application 
	of the Burkholder-Davis-Gundy inequality. For the same constant
	$A$ that appeared in \eqref{eq:mom:u},
	\begin{align*}
		J_1^2 &\le 4k|\lip(\sigma)|^2\int_0^t\d s\int_\T\d y
			\left[ p_{t+h-s}(x\,,y) - p_{t-s}(x\,,y)\right]^2\|u(s\,,y)\|_k^2\\
		&\le 4A^2\varpi^2 k\exp(2Ak^2t)\|u_0\|_{L^\infty(\T)}^2\int_0^t\d s\int_\T\d y
			\left[ p_{s+h}(x\,,y) - p_s(x\,,y)\right]^2.
	\end{align*}
	Therefore, we may apply Lemma B.6 of \cite{KKMS} in order to bound
	$\int_0^t\d s\int_\T\d y\ [ p_{s+h}(x\,,y) - p_s(x\,,y)]^2$ and deduce the following:
	\[
		J_1^2 \le 6A^2\varpi^2 k\exp(2Ak^2t) \|u_0\|_{L^\infty(\T)}^2\int_0^t
		\min\left( 1\,,\frac{h}{s}\right)\frac{\d s}{\sqrt{s}}.
	\]
	A direct evaluation of the integral [based on whether or not $s\le h$],
	and application of a square root, together yield
	\[
		J_1\le\sqrt{24k} A\varpi  \exp(Ak^2(t+h)) h^{1/4}\,\|u_0\|_{L^\infty(\T)}.
	\]
	Similarly, one obtains the following:
	\begin{align*}
		J_2^2 &\le 4k\varpi^2\int_t^{t+h}\d s\int_\T\d y \left[ p_{t+h-s}(x\,,y)\right]^2
			\|u(s\,,y)\|_k^2 \\
		&\le 4A^2\varpi^2k\exp(2Ak^2(t+h))\|u_0\|_{L^\infty(\T)}^2\int_0^h\d s\int_\T\d y
			\left[p_{s+h}(x\,,y)\right]^2 \\
		&=4A^2\varpi^2k\exp(2Ak^2(t+h))\|u_0\|_{L^\infty(\T)}^2\int_0^h
			p_{2(s+h)}(x\,,x)\,\d s.
	\end{align*}
	We now apply Lemma B.1 of our earlier paper with S.-Y. Shiu \cite{KKMS} in order to find that
	\begin{align*}
		J_2^2 &\le 8A^2\varpi^2k\exp(2Ak^2(t+h))\|u_0\|_{L^\infty(\T)}^2
			\int_0^h\max\left( \frac{1}{\sqrt{s+h}}\,,1\right)\d s\\
		&\le 8A^2\varpi^2k\exp(2Ak^2(t+h))\|u_0\|_{L^\infty(\T)}^2
			\int_0^h\frac{\d s}{\sqrt s} \\
		&= 16A^2\varpi^2k\exp(2Ak^2(t+h))\sqrt h\,\|u_0\|_{L^\infty(\T)}^2.
	\end{align*}
	Once again, the constant $A$ is the same that appeared in \eqref{eq:mom:u}.
	In this way, we find that
	\begin{equation}\label{II:1}
		\| \mathcal{I}(t+h\,,x) - \mathcal{I}(t\,,x) \|_k \le A\varpi
		\sqrt{80k}\,\exp(Ak^2(t+h)) h^{1/4}
		\|u_0\|_{L^\infty(\T)}.
	\end{equation}
	
	One proves, using similar arguments (see \cite[Lemma B.3]{KKMS})
	that for all $k\ge2$, $t\ge0$, and $x,z\in\T$,
	\begin{align*}
		\| \mathcal{I}(t\,,x) - \mathcal{I}(t\,,z) \|_k^2 &\le
			4A^2\varpi^2k\exp(2Ak^2t)\|u_0\|_{L^\infty(\T)}^2\int_0^t\d s\int_\T\d y
			\left[ p_s(x\,,y) - p_s(z\,,y) \right]^2\\
		&\le ck\exp(2Ak^2t)\|u_0\|_{L^\infty(\T)}^2|x-z|
			\int_0^t\frac{\d s}{s\wedge\sqrt s},
	\end{align*}
	for a real number $c>0$ that does not depend on $u_0$, $t>0$, $x,z\in\T$, nor $k\ge2$.
	Moreover, the constant $c$ depends on $\sigma$ only via $\varpi\ge\lip(\sigma)$.
	Thus, we find that
	\begin{equation}\label{II:2}
		\| \mathcal{I}(t\,,x) - \mathcal{I}(t\,,z) \|_k
		\lesssim \sqrt{k}\, \exp(Ak^2t)\|u_0\|_{L^\infty(\T)}
		\left( t^{1/4}\wedge \sqrt{\log_+ t}\right)|x-z|^{1/2},
	\end{equation}
	where $A$ is the same constant that appeared in \eqref{eq:mom:u},
	and the implied constant does not depend on $t>0$, $x,z\in\T$,  $k\ge2$,
	or $u$, except that $u_0\in L^\infty(\T)$ and $\lip(\sigma)\le\varpi$.
	Lemma \ref{lem:tp1} follows from \eqref{II:1} and \eqref{II:2}, and 
the triangle inequality.
\end{proof}

Lemma \ref{lem:tp1} has the following consequence for the stochastic integral
process $\mathcal{I}$ from \eqref{I}. Recall that $\varpi$ serves as
a proxy for an upper bound for the Lipschitz 
constant for $\sigma$.  

\begin{lemma}\label{lem:tp2}
	For every $\varpi>0$ and $0<\theta<\frac14$
	there exists a number $c=c(\theta\,,\varpi)>0$ such that 

	\begin{align*}
		\E\left(  \| \mathcal{I}(t) \|_{L^\infty(\T)}^k\right) 
			&\le (ck)^{k/2}\exp(ck^3t)
			\|u_0\|_{L^\infty(\T)}^k t^{k/4},\\
	\E\left(  \sup_{s\in[0,t]}\| \mathcal{I}(s) \|_{L^\infty(\T)}^k\right)
			&\le (ck)^{k/2}\exp(ck^3t)
			\|u_0\|_{L^\infty(\T)}^k t^{k\theta}, 
	\end{align*}
	uniformly for $t\in(0,1]$ and for
	all Lipschitz-continuous functions $\sigma:\R\to\R$ that satisfy
	$\lip(\sigma)\le\varpi$,   $u_0\in L^\infty(\T)$, and $k\ge2$. 
%Moreover, $t\mapsto c(t\,,\theta\,,\varpi)$ is non decreasing for every $0<\theta<\frac14$ and $\varpi>0$.
\end{lemma}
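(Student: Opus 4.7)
Both estimates will follow by combining the pointwise and modulus-of-continuity estimates from Lemma \ref{lem:tp1} with a Kolmogorov-type continuity argument (or Garsia--Rodemich--Rumsey) applied to the random field $\mathcal{I}$. The key initial observation is that $\mathcal{I}(0,\cdot)\equiv 0$, so specializing Lemma \ref{lem:tp1} to $(s,y)=(0,x)$ yields the pointwise bound
\[
        \|\mathcal{I}(t,x)\|_k \lesssim \sqrt{k}\,\e^{ck^2 t}\,\|u_0\|_{L^\infty(\T)}\, t^{1/4},
\]
uniformly in $x\in\T$, $t\in(0,1]$, $k\ge 2$, and $\sigma$ with $\lip(\sigma)\le\varpi$.

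\textbf{First bound (fixed-time sup).} I fix $t\in(0,1]$ and view $x\mapsto \mathcal{I}(t,x)$ as a random field on the compact torus $\T$. From the proof of Lemma \ref{lem:tp1} (specifically the derivation leading to \eqref{II:2}, keeping the factor $t^{1/4}$ that comes from $\int_0^t p_{2r}(0)\,\d r \lesssim \sqrt{t}$ for $t\le 1$ rather than bounding that integral by an absolute constant), one has the refined spatial modulus
\[
        \|\mathcal{I}(t,x)-\mathcal{I}(t,y)\|_k \lesssim \sqrt{k}\,\e^{ck^2 t}\,\|u_0\|_{L^\infty(\T)}\, t^{1/4}\,|x-y|^{1/2}.
\]
Combining the pointwise bound above with this Hölder modulus and a standard Kolmogorov continuity argument on $\T$, one obtains $\|\sup_{x\in\T}|\mathcal{I}(t,x)|\|_k \lesssim \sqrt{k}\,\e^{ck^2 t}\,\|u_0\|_{L^\infty(\T)}\,t^{1/4}$. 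Raising to the $k$th power yields the first claim.

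\textbf{Second bound (space-time sup).} For the supremum over $[0,t]\times\T$, I treat $\mathcal{I}$ as a random field on the two-parameter rectangle $[0,t]\times\T$ endowed with the parabolic metric $\rho((s,x),(r,y))=\sqrt{|s-r|}+|x-y|$. Lemma \ref{lem:tp1} provides the joint modulus
\[
        \|\mathcal{I}(s,x)-\mathcal{I}(r,y)\|_k \lesssim \sqrt{k}\,\e^{ck^2 t}\,\|u_0\|_{L^\infty(\T)}\,\rho((s,x),(r,y))^{1/2},
\]
which is Hölder $1/4$ in time and $1/2$ in space. Using $\mathcal{I}(0,x_0)=0$ to anchor the field, I apply Kolmogorov's continuity theorem (or GRR) on the two-dimensional parameter space $[0,t]\times\T$. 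Kolmogorov's theorem on a $d$-dimensional parameter space with $L^k$-modulus exponent $\gamma$ returns a modulus of order $\gamma - d/k$ on the supremum; the $k$-dependent loss forces $\theta<1/4$, which can accommodate this loss uniformly over $k\ge 2$ at the cost of a $\theta$-dependent multiplicative constant. This produces $\|\sup_{[0,t]\times\T}|\mathcal{I}|\|_k \lesssim \sqrt{k}\,\e^{ck^2 t}\,\|u_0\|_{L^\infty(\T)}\,t^{\theta}$, which upon raising to the $k$th power gives the second claim.

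\textbf{Main obstacle.} The principal technical care is in threading the $k$-dependence through the chaining argument so that the final constant has precisely the form $(ck)^{k/2}\e^{ck^3 t}$ without extraneous prefactors, and in verifying that the constant depends only on $\theta$ and $\varpi$ (and not on the specific choice of $\sigma$ or $u_0$). The restriction $\theta<1/4$ in the second bound is a genuine feature of the two-parameter Kolmogorov argument and is the reason a strict inequality appears in the statement rather than $\theta=1/4$.
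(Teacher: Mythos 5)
Your proposal is correct and follows essentially the same route as the paper: both bounds are obtained from Lemma \ref{lem:tp1} together with a quantitative Kolmogorov continuity theorem, anchoring at $\mathcal{I}(0,\cdot)\equiv 0$, with the restriction $\theta<1/4$ in the second bound arising exactly as you describe from the $k$-dependent loss in the space-time chaining. Your observation that the first bound needs the refined spatial modulus carrying the $t^{1/4}$ factor (i.e.\ \eqref{II:2} from the proof of Lemma \ref{lem:tp1}, not merely its statement) is a correct and slightly more careful reading of what the paper's one-line proof tacitly uses.
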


\begin{proof}
	The first inequality comes from Lemma \ref{lem:tp1} and a quantitative 
	form of Kolmogorov continuity theorem (see Khoshnevisan \cite[Appendix C]{cbms}). 
	The second inequality also comes from Lemma \ref{lem:tp1} and the Kolmogorov continuity theorem.
	We will mention how, since this sort of argument can arise multiple times:
	\begin{align*}
		| \mathcal{I}(s\,, x)|   &\leq t^\theta \left[ \frac{|\mathcal{I}(s\,, x) - 
			\mathcal{I}(0\,, x)|}{s^\theta} \right]  + |\mathcal{I}(0\,,x)|\\
		&\leq  t^\theta 
			\left[ \sup_{\substack{ (s_1, x_1) \neq (s_2, x_2)  \\ s_1, s_2\in [0, t], \, x_1, x_2\in\T}} 
			\frac{|\mathcal{I}(s_1, x_1)-\mathcal{I}(s_2, x_2)|}{|s_1-s_2|^\theta + |x_1-x_2|^{2\theta} }  
			\right]  + |\mathcal{I}(0\,,x)|,
	\end{align*}
	uniformly  for all $s\in [0\,, t]$ and $x\in \T$. This completes the proof.
\end{proof}

\begin{lemma}\label{lem:pr2}
	For every $\varpi>0$ and $0<\theta<\frac14$ 
there exists
	$c=c(\theta\,,\varpi)>0$ such that
	\begin{align*}
		\left\| \|u(t)\|_{L^\infty(\T)} \right\|_k
			&\lesssim \sqrt{k}\,\exp( c k^2t) \|u_0\|_{L^\infty(\T)} t^{1/4}
			+   t^{-1/2},\\
		\left\| \sup_{s\in(0,t)}\|u(s)\|_{L^\infty(\T)} \right\|_k
			&\le c\sqrt{k}\,\exp( c k^2t) \|u_0\|_{L^\infty(\T)} t^\theta
			+ \|u_0\|_{L^\infty(\T)},
	\end{align*}
	uniformly for all $k\ge2$, $0<t\le1$, 
	all Lipschitz-continuous functions $\sigma:\R\to\R$ that satisfy
	$\lip(\sigma)\le\varpi$,  and all $u_0\in C_+(\T)$ that satisfy
	$\|u_0\|_{L^1(\T)}=1$.
\end{lemma}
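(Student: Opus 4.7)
The plan is to decompose $u$ via its mild formulation \eqref{mild} into a deterministic heat-kernel part and the stochastic integral $\mathcal{I}$ of \eqref{I}, then apply the already-proved bounds on $\mathcal{I}$ (Lemma \ref{lem:tp2}) together with standard estimates on $P_t u_0$. Write
\[
    \|u(t)\|_{L^\infty(\T)} \le \|P_t u_0\|_{L^\infty(\T)} + \|\mathcal{I}(t)\|_{L^\infty(\T)},
\]
and likewise for the supremum over $s\in(0\,,t)$.

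For the pointwise-in-time bound, estimate the deterministic term using Young's inequality and the heat-kernel supremum bound $p_t(0)\lesssim t^{-1/2}$ for $t\in(0\,,1]$ (Lemma B.1 of \cite{KKMS}). Since $\|u_0\|_{L^1(\T)}=1$ by hypothesis, this yields
\[
    \|P_t u_0\|_{L^\infty(\T)} \le p_t(0)\|u_0\|_{L^1(\T)}\lesssim t^{-1/2},
\]
which is non-random. For the stochastic part, apply the first inequality of Lemma \ref{lem:tp2} and take a $k$th root: the factor $(ck)^{k/2}$ becomes $\sqrt{ck}$, the factor $\exp(ck^3 t)$ becomes $\exp(ck^2 t)$, and the factor $t^{k/4}$ becomes $t^{1/4}$. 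Combining via the triangle inequality in $L^k(\Omega)$ gives the first claim.

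For the supremum version, use the fact that $p_s(x\,,\cdot)$ is a probability density on $\T$, so the semigroup $\{P_s\}_{s\ge 0}$ is $L^\infty$-contractive and
\[
    \sup_{s\in(0,t)}\|P_s u_0\|_{L^\infty(\T)} \le \|u_0\|_{L^\infty(\T)}.
\]
For the stochastic term, invoke the second inequality of Lemma \ref{lem:tp2} in the same way, taking $k$th roots to replace $(ck)^{k/2}\exp(ck^3 t) t^{k\theta}$ by $\sqrt{ck}\,\exp(ck^2 t)t^\theta$. Adding the deterministic upper bound yields the stated inequality.

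No step presents a serious obstacle: everything reduces to the estimates already established for $\mathcal{I}$ in Lemma \ref{lem:tp2}, the standard heat-kernel estimate $p_t(0)\lesssim t^{-1/2}$, and the $L^\infty$-contractivity of $P_s$. The only minor subtlety is matching exponents: the moment bounds on $\mathcal{I}$ have $\exp(ck^3 t)$, but after extracting $k$th roots this becomes exactly the desired $\exp(ck^2 t)$, and the assumption $\|u_0\|_{L^1(\T)}=1$ is exactly what is needed to turn the $L^\infty$ norm of $P_t u_0$ into a clean $t^{-1/2}$ contribution independent of the (possibly large) $\|u_0\|_{L^\infty(\T)}$.
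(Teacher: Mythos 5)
Your proof is correct and follows exactly the paper's route: decompose $u$ via the mild form \eqref{I} into $P_tu_0$ plus $\mathcal{I}$, bound $\|P_tu_0\|_{L^\infty(\T)}\lesssim t^{-1/2}$ via the heat-kernel sup bound and $\|u_0\|_{L^1(\T)}=1$ (resp.\ bound $\sup_s\|P_su_0\|_{L^\infty(\T)}\le\|u_0\|_{L^\infty(\T)}$ by contractivity), and take $k$th roots in Lemma \ref{lem:tp2}. No issues.
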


\begin{proof}
	Because $\sup_{x\in\T}p_t(x)\le 2(1\vee t^{-1/2})$ for all $t>0$
	(see for example \cite[Lemma B.1]{KKMS}), and since $\int_\T u_0(x)\,\d x=1$,
	it follows that $\|P_tu_0\|_{L^\infty(\T)}\le 2(1\vee t^{-1/2})$.
	The first portion of the lemma follows from this, Lemma \ref{lem:tp2}, and 
	\eqref{I}. The second portion follows similarly.
\end{proof}

We are ready to present and prove the main result of this subsection.
The following device controls the tall peaks of the solution,
and addresses the first of the two challenges that were mentioned
earlier on in the section.

\begin{proposition}\label{pr:KPZ1}
	For every $\varpi>0$ and $\frac43<\gamma<2$  there exist $K=K(\gamma\,,\varpi)>0$ 
	such that
	\begin{align*}
		\P\left\{ \left\| u\left( N^{-\gamma} \right)
		\right\|_{L^\infty(\T)}  \ge K N^{\gamma/2} \right\}
		&\le K\exp\left( - N^{(3\gamma-4)/2} \right),\\
		\P\left\{  \sup_{0\le s\le N^{-\gamma} }\|u(s)\|_{L^\infty(\T)} \ge 
		2N\right\} & \le K\exp\left( - \tfrac12 N^{(3\gamma-4)/2} \right),
	\end{align*}
	uniformly for all real numbers $N\ge1$, 
	all Lipschitz-continuous functions $\sigma:\R\to\R$ that satisfy
	$\lip(\sigma)\le\varpi$,  and all $u_0\in C_+(\T)$
	that satisfy $\|u_0\|_{L^1(\T)}=1$ and $\|u_0\|_{L^\infty(\T)}\le N$.
\end{proposition}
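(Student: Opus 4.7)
The plan is to apply the moment bounds of Lemma \ref{lem:pr2} and Lemma \ref{lem:tp2} at the critical time $t=N^{-\gamma}$ with moment exponent $k\asymp N^{(3\gamma-4)/2}$, and then pass to tails via Markov's inequality. The hypothesis $\tfrac43<\gamma<2$ makes this scaling admissible: $\gamma<2$ guarantees $k^2t=N^{2\gamma-4}\le 1$, so the Gaussian pre-factor $\exp(ck^2t)$ in every moment bound is just a constant; and $\gamma>\tfrac43$ ensures $k\to\infty$ with $N$, so Markov produces genuine exponential decay in $N^{(3\gamma-4)/2}$. For the first inequality, split $u(N^{-\gamma}) = P_{N^{-\gamma}}u_0 + \mathcal{I}(N^{-\gamma})$. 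The heat part is bounded by $2N^{\gamma/2}$ using $\|u_0\|_{L^1(\T)}=1$ and $\sup_x p_t(x)\lesssim 1\vee t^{-1/2}$, and Lemma \ref{lem:tp2} with $k=\lceil N^{(3\gamma-4)/2}\rceil$ gives
\[
\bigl\|\,\|\mathcal{I}(N^{-\gamma})\|_{L^\infty(\T)}\bigr\|_k \lesssim \sqrt{k}\,e^{ck^2N^{-\gamma}}\,\|u_0\|_\infty\, N^{-\gamma/4} \lesssim N^{(3\gamma-4)/4}\cdot N\cdot N^{-\gamma/4} = N^{\gamma/2},
\]
the exponents balancing exactly. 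Markov with threshold $(K-2)N^{\gamma/2}$ then gives a bound of the form $(C/K)^k$, and choosing $K$ so that $C/K\le 1/e$ yields $e^{-k}\le \exp(-N^{(3\gamma-4)/2})$; finitely many small $N$ for which $k<2$ are absorbed into $K$.

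\textbf{Second inequality.} Since $P_s$ is an $L^\infty$-contraction, $\sup_{s\ge 0}\|P_s u_0\|_\infty \le \|u_0\|_\infty\le N$, and hence the event $\{\sup_{0\le s\le N^{-\gamma}}\|u(s)\|_\infty\ge 2N\}$ is contained in $\{\sup_{0\le s\le N^{-\gamma}}\|\mathcal{I}(s)\|_\infty\ge N\}$. Apply the sup-in-time part of Lemma \ref{lem:tp2} with $\theta$ in the interval $\bigl((3\gamma-4)/(4\gamma)\,,\,1/4\bigr)$ --- non-empty precisely because $\tfrac43<\gamma<2$ --- and $k=N^{(3\gamma-4)/2}$. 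This yields $\bigl\|\sup_{s\le N^{-\gamma}}\|\mathcal{I}(s)\|_\infty\bigr\|_k \lesssim N^{\gamma(3/4-\theta)} = N^{1-\eta}$ for some $\eta=\eta(\gamma,\theta)>0$. Markov then gives
\[
\P\bigl\{\sup_s\|\mathcal{I}(s)\|_\infty\ge N\bigr\} \le (AN^{-\eta})^k = \exp\bigl[k(\ln A-\eta\ln N)\bigr],
\]
which is at most $\exp(-k/2)=\exp(-N^{(3\gamma-4)/2}/2)$ once $\eta\ln N\ge \ln A+1/2$, i.e.\ once $N$ exceeds a fixed threshold $N_\ast$ depending only on $\gamma,\varpi,\theta$. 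The bounded interval $N\in[1\,,N_\ast)$ is absorbed into $K$.

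\textbf{Main obstacle.} The technical crux is the parameter matching. The choice $k\asymp N^{(3\gamma-4)/2}$ is forced by three simultaneous requirements: keeping $k^2t$ bounded, balancing $\sqrt{k}\,\|u_0\|_\infty\, t^{1/4}$ against $t^{-1/2}\asymp N^{\gamma/2}$ in the first bound, and producing the claimed tail exponent in the final Markov step. The H\"older exponent $\theta$ in the sup-in-time bound must straddle a window that only opens when $\tfrac43<\gamma<2$; outside this range, either $k^2t$ blows up or the moment bound's power of $N$ fails to stay below $N$, so the fixed threshold $2N$ cannot be pushed below a positive fraction by any choice of $k$. Once these exponent balances are engineered, the rest is a routine application of Markov's inequality together with the absorption of finitely many small $N$ into the constant $K$.
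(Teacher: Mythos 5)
Your proposal is correct and follows essentially the same route as the paper: apply the $L^k(\Omega)$ moment bounds at $t=N^{-\gamma}$ with $k\asymp N^{(3\gamma-4)/2}$ (so that $k^2t=N^{2\gamma-4}$ stays bounded because $\gamma<2$), check that the resulting power of $N$ is $N^{\gamma/2}$ for the first claim and $N^{1-\eta}$ for the second (the window for $\theta$ being exactly the paper's condition $\gamma(3-4\theta)<4$), and finish with Chebyshev's inequality plus absorption of small $N$ into $K$. The only cosmetic difference is that you unbundle the paper's Lemma \ref{lem:pr2} into its two ingredients --- the heat-semigroup bound $\|P_tu_0\|_{L^\infty(\T)}\le 2(1\vee t^{-1/2})\|u_0\|_{L^1(\T)}$ and Lemma \ref{lem:tp2} for the stochastic integral --- which is precisely how that lemma is proved.
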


The proof of Proposition \ref{pr:KPZ1} rests solely on Lemma \ref{lem:pr2} and 
Chebyshev's inequality. Still, there are a number of parameters that needs to be
controlled and the ensuing ``numerology'' is slightly messy. Therefore, we include
some of the requisite details in order to help with the perusal of the argument.

\begin{proof}
	Since $\gamma <2$, we can  choose and fix $0<\theta<\frac14$ that satisfies $\gamma(3-4\theta) < 4$.
	We apply Lemma \ref{lem:pr2} with $t=N^{-\gamma}$ 
	and $k= N^{(3\gamma-4)/2}$ in order to see that there exists a
	real number $K_1=K_1(\gamma\,,\theta\,,\varpi)>0$ such that
	\begin{equation}\label{Joob}\begin{split}
		\left\| \|u(N^{-\gamma})\|_{L^\infty(\T)} \right\|_{N^{(3\gamma-4)/2}}
			&\le \e^{-1} K_1N^{\gamma/2},\\
		\left\| \sup_{0\le s\le N^{-\gamma}}\|u(s)\|_{L^\infty(\T)} \right\|_{N^{(3\gamma-4)/2}}
			&\le N + \tfrac12 K_1N^{\gamma(3-4\theta)/4},
	\end{split}\end{equation}
	uniformly for all $N\ge 2^{2/(3\gamma-4)}$,
	all Lipschitz-continuous functions $\sigma:\R\to\R$ that satisfy
	$\lip(\sigma)\le\varpi$,  and all $u_0\in C_+(\T)$ that satisfy
	$\|u_0\|_{L^1(\T)}=1$. This is because $\exp(ck^2/N^\gamma)$ is bounded uniformly
	in $(k\,,N)$ for the present choices of $(k\,,N)$. 
	The first bound in \eqref{Joob} and Chebyshev's inequality together imply that
	\begin{align*}
		\P\left\{ \left\| u\left( N^{-\gamma} \right)
			\right\|_{L^\infty(\T)}  \ge K_1 N^{\gamma/2} \right\}
			&\le  \E\left( \left| \frac{\left\| u\left( N^{-\gamma} \right)
			\right\|_{L^\infty(\T)}}{K_1N^{\gamma/2}}\right|^k\right) \\
		&\le \exp\left( - N^{(3\gamma-4)/2} \right)\qquad\text{for
			all $N\ge 2^{(3\gamma-2)/2}$}.
	\end{align*}	
	Thus, we find that there exists $K_2=K_2(\gamma\,,\theta\,,\varpi)>0$ such that
	\[
		\P\left\{ \left\| u\left( N^{-\gamma} \right)
		\right\|_{L^\infty(\T)}  \ge K_1 N^{\gamma/2} \right\}
		\le K_2\exp\left( - N^{(3\gamma-4)/2} \right)\qquad\text{for
		all $N\ge 1$},
	\]
	which is another way to state the first assertion of the proposition.
	
	In order to deduce the second portion of the proposition, note that the constant
	$\gamma(3-4\theta)/4$ [that appears in the exponent of $N$ in the second part of \eqref{Joob}]
	lies strictly between $0$ and $1$.
	Therefore, it follows from \eqref{Joob} that for every $q>1$
	there exists $N_0=N_0(q\,,\gamma\,,\theta\,,\varpi)>0$
	such that
	\[
		\left\| \sup_{0\le s\le N^{-\gamma}}\|u(s)\|_{L^\infty(\T)} \right\|_{N^{(3\gamma-4)/2}}
		\le qN\qquad\text{for all $N\ge N_0$}.
	\]
	Apply this with $q=2\exp(-1/2)$, and then use Chebyshev's inequality to deduce that
	\[
		\P\left\{ \sup_{0\le s\le N^{-\gamma}}\|u(s)\|_{L^\infty(\T)} 
		\ge 2N \right\}\le \exp\left( -\tfrac12 N^{(3\gamma-4)/2}\right)\qquad\text{for all $N\ge N_0$}.
	\]
	Consequently, there exists $K_3=K_3(\gamma\,,\theta\,,\varpi)>0$ such that

	\[
		\P\left\{ \sup_{0\le s\le N^{-\gamma}}\|u(s)\|_{L^\infty(\T)} 
		\ge 2N \right\}\le 
		K_3\exp\left( -\tfrac12 N^{(3\gamma-4)/2}\right)\qquad\text{for all $N\ge1$}.
	\]
	The proposition follows with $K=\max(K_1\,,K_2\,,K_3)$.
\end{proof}

\subsection{Control of total mass}\label{subsec:total-mass}

We now turn to the second-mentioned challenge of the section. The solution to that challenge
lies in the next proposition. Specifically, the following asserts that,
on a mesoscopic time scale, there is a high probability that the 
total mass of the solution is
not unduly small, even if the solution starts out with a rather tall peak at time
zero (and is everywhere else small at that time), and hence the spatial maximum of 
the noise coefficient may be large to begin with.

\begin{proposition}\label{pr:valleys}
	For every $\varpi>0$ and $\frac43<\gamma<2$, there exists $L=L(\gamma\,,\varpi)>1$ such that
	\[
		\P\left\{ \inf_{0\le t\le N^{-\gamma}} \|u(t)\|_{L^1(\T)} 
		\le  \frac{1}{2} \quad \text{or}\   \sup_{0\le t\le N^{-\gamma}}
		\|u(t)\|_{L^1(\T)} \ge  2 \right\}
		\le L\exp\left( -\frac{N^{(3\gamma-4)/3}}{L} \right),
	\]
	uniformly for all $N\ge2$,
	all Lipschitz-continuous functions $\sigma:\R\to\R$ that satisfy
	$\lip(\sigma)\le\varpi$,
	and all $u_0\in C_+(\T)$ that satisfy $\|u_0\|_{L^1(\T)}=1$ and $\|u_0\|_{L^\infty(\T)}\le N$.
\end{proposition}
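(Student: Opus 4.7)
The hinge of the argument is that the total mass $M(t):=\|u(t)\|_{L^1(\T)}$ is a continuous martingale up to the additive constant $1$. Integrating the mild formulation \eqref{mild} over $x\in\T$ and using the mass-preservation identity $\int_\T p_{t-s}(x,y)\,\d x=1$ together with $\|u_0\|_{L^1(\T)}=1$ gives
\[
M(t)=1+Z_t,\qquad Z_t:=\int_{(0,t)\times\T}\sigma(u(s,y))\,W(\d s\,\d y).
\]
Since $\sigma(0)=0$ we have $|\sigma(z)|\le\varpi|z|$, and the interpolation $\int_\T u(s,y)^2\,\d y\le\|u(s)\|_{L^\infty(\T)}\,M(s)$ then yields
\[
\langle Z\rangle_t\le \varpi^2\int_0^t\|u(s)\|_{L^\infty(\T)}\,M(s)\,\d s.
\]
The event whose probability is to be estimated coincides with $\{\tau\le N^{-\gamma}\}$, where $\tau:=\inf\{t>0:M(t)\notin(1/2,2)\}$ is the first exit of $M$ from $(1/2,2)$.

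Set $E:=\{\sup_{0\le s\le N^{-\gamma}}\|u(s)\|_{L^\infty(\T)}\le 2N\}$; by the second assertion of Proposition \ref{pr:KPZ1}, $\P(E^c)\le K\exp(-\tfrac12 N^{(3\gamma-4)/2})$. On $E$, for every $t\le\tau\wedge N^{-\gamma}$ we have $M(t)\le 2$ and $\|u(t)\|_{L^\infty(\T)}\le 2N$, so the stopped process $Z^\tau_t:=Z_{t\wedge\tau}$ satisfies $\langle Z^\tau\rangle_{N^{-\gamma}}\le 4\varpi^2 N^{1-\gamma}$ on $E$.

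I then apply the standard Bernstein-type exit estimate for continuous martingales. Localising at $\rho:=\inf\{t:\langle Z^\tau\rangle_t>4\varpi^2 N^{1-\gamma}\}$ produces a martingale $(Z^\tau)^\rho$ with deterministically bounded quadratic variation that coincides with $Z^\tau$ on $E$; Dambis-Dubins-Schwarz then gives
\[
\P\left(\sup_{0\le t\le N^{-\gamma}}|Z^\tau_t|\ge\tfrac12,\ E\right)\le 2\exp\left(-\frac{(1/2)^2}{2\cdot 4\varpi^2 N^{1-\gamma}}\right)=2\exp\left(-\frac{N^{\gamma-1}}{32\varpi^2}\right).
\]
On $\{\tau\le N^{-\gamma}\}$ we have $|Z^\tau_\tau|=|M(\tau)-1|=\tfrac12$, so the left-hand side dominates $\P(\{\tau\le N^{-\gamma}\}\cap E)$. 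Adding $\P(E^c)$ and noting that both $(3\gamma-4)/2$ and $\gamma-1$ strictly exceed $(3\gamma-4)/3$ whenever $\gamma>4/3$ (the second reduces to $1>0$), the sum is $\le L\exp(-N^{(3\gamma-4)/3}/L)$ for a suitable $L=L(\gamma,\varpi)$.

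The main obstacle is the bookkeeping of the localisation: the quadratic-variation bound $4\varpi^2 N^{1-\gamma}$ only holds on the event $E$ coming from Proposition \ref{pr:KPZ1}, so one must couple $Z^\tau$ to an auxiliary stopped martingale whose quadratic variation is deterministically controlled, or else invoke DDS pathwise on $E$. Apart from this, the argument is mass conservation for the deterministic semigroup, a standard stopping-time reduction, and a Gaussian exit tail for Brownian motion.
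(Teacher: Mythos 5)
Your proof is correct, and it diverges from the paper's in one substantive and rather elegant way. Both arguments start from the martingale representation $\|u(t)\|_{L^1(\T)}=1+Z_t$, bound the quadratic variation via $\int_\T u^2\le \|u\|_{L^\infty(\T)}\|u\|_{L^1(\T)}$, control the $L^\infty$ factor by the second assertion of Proposition \ref{pr:KPZ1}, and finish with Dambis--Dubins--Schwarz plus the reflection principle. The difference is in how the $L^1$ factor is tamed: the paper introduces a third event $A_3=\{\sup_{t\le N^{-\gamma}}\|u(t)\|_{L^1(\T)}\ge N^{1/3}\}$ and must then spend a separate Burkholder--Davis--Gundy computation (with an optimized moment order $k\asymp N^{(3\gamma-4)/3}$) to show $\P(A_3)$ is negligible; this is what produces the exponent $(3\gamma-4)/3$ in the statement. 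You instead stop $Z$ at the exit time $\tau$ of the total mass from $(1/2,2)$ — the very event being estimated — so that $M(s)\le 2$ holds pathwise up to $\tau$ and the stopped quadratic variation is at most $4\varpi^2N^{1-\gamma}$ on $E$. This eliminates $A_3$ and its estimate entirely, and in fact yields a strictly better bound: your two terms decay like $\exp(-cN^{\gamma-1})$ and $\exp(-\tfrac12N^{(3\gamma-4)/2})$, both of which dominate the claimed $\exp(-N^{(3\gamma-4)/3}/L)$ for $\gamma\in(\tfrac43,2)$. The localisation at $\rho$ that you flag as the "main obstacle" is the standard one and works exactly as you describe. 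One trivial slip: on $\{\tau\le N^{-\gamma}\}$ you have $|M(\tau)-1|\in\{\tfrac12,1\}$, so you should write $|Z^\tau_\tau|\ge\tfrac12$ rather than $=\tfrac12$; the inclusion $\{\tau\le N^{-\gamma}\}\subset\{\sup_{t\le N^{-\gamma}}|Z^\tau_t|\ge\tfrac12\}$ is unaffected.
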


We will prove Proposition \ref{pr:valleys} after we comment on a subtle feature of that proposition.
Define
\[
	\mathfrak{M}_t = \|u(t)\|_{L^1(\T)} = \int_\T u(t\,,x)\,\d x\qquad\text{for all $t\ge0$}.
\]
This is the total mass process of the solution to \eqref{SHE}, and is well known to be
a nice martingale. In fact, we can integrate both sides of \eqref{I} and appeal to a stochastic Fubini theorem
in order to conclude the well-known fact that
\[
	\mathfrak{M}_t = \mathfrak{M}_0 + 
	\int_{(0,t)\times\T} \sigma(u)\,\d W\qquad[t>0].
\]
Thus, we see that $\{\mathfrak{M}_t\}_{t\ge0}$ is a continuous $L^2(\Omega)$-martingale
with quadratic variation given by
\[
	\< \mathfrak{M}\>_t = \int_0^t\|\sigma(u(s))\|_{L^2(\T)}^2\,\d s
	\qquad[t\ge0].
\]
An appeal to a suitable form of the Burkholder-Davis-Gundy inequality and \eqref{sigma(0)=0} yields
the following: For all $t\ge0$ and $k\ge2$,
\[
	\| \mathfrak{M}_t-\mathfrak{M}_0\|_k^2 \le 4k\int_0^t\d s\int_\T\d y\ \|\sigma(u(s\,,y))\|_{2k}^2
	\le 4k|\lip(\sigma)|^2\int_0^t\d s\int_\T\d y\ \|u(s\,,y)\|_{2k}^2;
\]
see Khoshnevisan
\cite[Proposition 4.4]{cbms}. Therefore, \eqref{eq:mom:u} yields the following: For every $k\ge2$,
\begin{equation}\label{M:conc}
	\| \mathfrak{M}_t-1\|_k \lesssim  N\sqrt t,
\end{equation}
uniformly for all $t\ge0$, $N\ge1$, and $u_0\in L^\infty(\T)$ such that $\|u_0\|_{L^1(\T)}=1$ and
$\|u_0\|_{L^\infty(\T)}\le N$.
This bound turns out to be essentially unimprovable. 

Instead of establishing the above assertions in great detail, let us simply apply
them in order to be able to observe that $\P\{\mathfrak{M}_t\approx1\}\approx1$ provided that the
time variable $t$ is measured on a microscopic scale: $t\ll N^{-2}$.
Among other things, this shows that a well-known concentration estimate such as \eqref{M:conc}
yields $\P\{\mathfrak{M}_t\approx0\}\approx0$ when $t\ll N^{-2}$.
Proposition \ref{pr:valleys} says that we still have $\P\{\mathfrak{M}_t\approx0\}\approx0$ even
when $t$ is in the mesoscopic scale, $N^{-2} \ll t \ll 1$. But,
as we shall see, this is true for more subtle reasons than a mere concentration 
fact such as \eqref{M:conc}.

\begin{proof}[Proof of Proposition \ref{pr:valleys}]
	Concentration estimates such as \eqref{M:conc} fail to prove Proposition \ref{pr:valleys}
	because there is a very tall peak at time 0. That is, when $\|u_0\|_{L^1(\T)}=1$
	yet $\|u_0\|_{L^\infty(\T)}=N\gg1$ one is faced with an ``intermittency effect.'' That
	renders a moment bound such as \eqref{M:conc} useless. Therefore, in order to subdue
	the ``intermittency effect'',  we  control the quadratic variation of $\mathfrak{M}_t$ by 
	using $\|u(t)\|_{L^\infty(\T)}$ and $\|u(t)\|_{L^1(\T)}$ as follows. Define 
	\[ 
		\mathfrak{N}_t = \mathfrak{M}_t-1 
		  = \int_{(0,t)\times \T} \sigma(u(s\,, y)) \, W(\d s\, \d y)
		  \hskip1in[t\ge0].
	\]
	
	We may write 
	\begin{align*}
		\P\left\{  \inf_{0\le t\le N^{-\gamma}} \|u(t)\|_{L^1(\T)} \le  
			\frac{1}{2} \quad \text{or} \   \sup_{0\le t\le N^{-\gamma}} \|u(t)\|_{L^1(\T)} \ge  2\right\}
			& \leq \P \left\{  \sup_{0\le t\le N^{-\gamma}} 
			| \mathfrak{N}_t| \geq \frac{1}{2} \right\} \\
		&\leq \P(A_1) + \P(A_2) + \P(A_3), 
	\end{align*}
	where $A_1$, $A_2$, and $A_3$ are events that are defined in reverse order as follows:
	\begin{align*}
		A_3 &=   \left\{\omega\in\Omega:\,
			\sup_{0\le t\le N^{-\gamma}} \|u(t)\|_{L^1(\T)} (\omega)
			\geq N^{1/3} \right\};\\
		A_2& =   \left\{  \omega\in\Omega:\,
			\sup_{0\le t\le N^{-\gamma}}  \|u(t)\|_{L^\infty(\T)}
			(\omega) \geq  2N \right\};\\
		A_1& =   \left\{  \omega\in\Omega:\,
			\sup_{0\le t\le N^{-\gamma}} | \mathfrak{N}_t(\omega)| \geq 
			\frac12  \right\}\cap A_2^c\cap A_3^c. 
	\end{align*}
	Proposition \ref{pr:KPZ1} tells us  that there exists a number 
	$K=K(\gamma\,,\varpi)>0$ such that 
	\[ 
		\P(A_2) \leq K\exp\left( - \tfrac12 N^{(3\gamma-4)/2} \right),
	\]
	uniformly for all $N\ge2$.
	
	Next, we consider the event $A_1$. Since  $\mathfrak{N}$ is a continuous 
	$L^2(\Omega)$-martingale with mean zero and quadratic variation 
	\[
		\langle \mathfrak{N}\rangle_t = \int_{(0,t)\times\T} 
		[\sigma(u(s\,, y))]^2 \, \d y\,  \d s
		\qquad[t\ge0].
	\]
	Therefore, almost surely on the event $A_2^c\cap A_3^c$,
	\begin{equation} \label{eq:quad-bd}
		\sup_{0\le t\le N^{-\gamma}}  \langle \mathfrak{N}\rangle_t \leq [\lip(\sigma)]^2  
		\sup_{0\le t\le N^{-\gamma}}\int_{(0,t)\times\T} [u(s\,, y)]^2
		\, \d y\,  \d s \leq  2 \varpi^2 \, N^{-(3\gamma-4)/3},
	\end{equation}
	uniformly for all solutions of the SPDE \eqref{SHE} as long as $\lip(\sigma)\le\varpi$.
	Thanks to the martingale representation theorem, there exists a Brownian motion 
	$B=\{B(t)\}_{t\ge0}$ such that
	$\mathfrak{N}_t = B(\langle \mathfrak{N}\rangle_t)$ for all $t\ge0$.
	Therefore, the reflection principle and \eqref{eq:quad-bd} together
	imply that there exists a real number $c_1=c_1(\varpi\,, \gamma)>0$ such that 
	\begin{align*} 
		\P(A_1)& \leq  \P \left\{ \sup_{0\le t\le N^{-\gamma}} 
			\left| B(\langle \mathfrak{N}\rangle_t) \right| 
			\geq \frac12\ ,  \sup_{0\le t\le N^{-\gamma}} \langle 
			\mathfrak{N}\rangle_t \leq 2 \varpi^2 \, N^{-(3\gamma-4)/3}   \right\} \\ 
		&\leq \P\left\{  \sup_{0\le t\le 2 \varpi^2 \, N^{-(3\gamma-4)/3} }  |B(t)| \geq \frac12   \right\}
			\leq 2 \exp\left(  - c_1 N^{(3\gamma-4)/3} \right),
	\end{align*}
	uniformly for all $N\ge2$. We pause to mention that
	the above basically reproduces Freedman's martingale inequality
	\cite{Freedman}.
	
	Finally,  we bound $\P(A_3)$.  By a suitable application of the Burkholder-Davis-Gundy inequality
	\cite[Proposition 4.4]{cbms}, for all $k\geq 2$ and $t\ge0$,
	\begin{align*}
		\E \left(\left|  \mathfrak{N}_t \right|^k \right) 
			&\leq (4k)^{k/2}\E\left( \<\mathfrak{N}_t\>^{k/2}\right)
			= (4k)^{k/2} \E  \left( \left| \int_{(0,t)\times\T} [\sigma(u(s\,, y))^2
			\, \d s\, \d y \right|^{k/2}\right)\\
		&\leq (4k)^{k/2}  \varpi^k  \left( \int_{(0,t)\times\T}
			\|u(s\,, y)\|_k^{2} \,\, \d s\, \d y\right)^{k/2}
			\leq (3A\varpi N)^{k}\, (tk)^{k/2}\,  \exp\left( Ak^3 t\right).
	\end{align*}
	In the last inequality, we have used the fact that $\|u_0\|_{\infty}\leq N$ together with
	\eqref{eq:mom:u}.
	And the number $A$ is the constant that appeared in \eqref{eq:mom:u} and is, in particular,
	independent of $N\ge2$ and $t\ge0$. Whenever $N\ge2$,
	\[
		N^{1/3} - 1 \ge 
		N^{1/3} - (N/2)^{1/3} = C^{-1} N^{1/3},
	\]
	with $C = (1 - 2^{-1/3})^{-1}$.
	Therefore, Doob's inequality ensures that
	\begin{align*}
		\P(A_3) &= \P\left\{ \sup_{0\le t\le N^{-\gamma}} |\mathfrak{N}_t + 1|  \ge N^{1/3}\right\}
			\le \P\left\{ \sup_{0\le t\le N^{-\gamma}}  |\mathfrak{N}_t| 
			\geq C^{-1} N^{1/3} \right\}\\
		&\leq C^k N^{-k/3}\,  \E  \left( |\mathfrak{N}_{N^{-\gamma}}|^k \right)
			\leq (3CA\varpi)^{k} k^{k/2}  N^{-k(3\gamma-4)/6}\, \exp(Ak^3 N^{-\gamma})\\
		&= \left( \frac{(3CA\varpi)^2 k}{N^{(3\gamma-4)/3}}\right)^{k/2} \exp(Ak^3 N^{-\gamma}),
	\end{align*}	
	for all $k,N\ge2$. We can choose 
	\[
		k = \frac{N^{(3\gamma-4)/3}}{(3CA\varpi\e)^2}
	\]
	in order to see that
	\[
		\P(A_3) \le \exp\left( -k + Ak^3 N^{\gamma}\right) 
		= \exp\left(  -\frac{N^{(3\gamma-4)/3}}{(3CA\varpi\e)^2}  
		+ \frac{AN^{2\gamma-4}}{(3CA\varpi\e)^6}\right).
	\]
	Because $\gamma<2$, it follows that $2\gamma-4<(3\gamma-4)/3$, and this means 
	that the negative part of the exponent dominates. Therefore,
	there exists a number $C_0=C_0(\varpi\,,\gamma)>0$ such that
	\[
		\P(A_3) \le C_0^{-1}\exp\left( -C_0N^{(3\gamma-4)/3}\right).
	\]
	We may combine our estimates for the respective probabilities of
	$A_1,A_2,A_3$ in order to find that, uniformly for all $N\ge2$,
	\begin{align*}
		&\P\left\{  \inf_{0\le t\le N^{-\gamma}} \|u(t)\|_{L^1(\T)} \le  
			\frac{1}{2} \quad \text{or} \   \sup_{0\le t\le N^{-\gamma}} \|u(t)\|_{L^1(\T)} \ge  2\right\}
			\leq \P(A_1)+\P(A_2)+\P(A_3) \\
		&\leq 2 \exp\left(  - c_1 N^{(3\gamma-4)/3} \right) +
			K\exp\left( - \tfrac12 N^{(3\gamma-4)/2} \right)
			 + C_0^{-1}\exp\left( -C_0N^{(3\gamma-4)/3}\right),
	\end{align*}
	to conclude the proof from the elementary fact that
	$(3\gamma-4)/3 < (3\gamma-4)/2$.
\end{proof}

\subsection{Completion of the proof of Theorem \ref{th:S/M}}\label{subsec:Pf:Th:S/M}

The proof relies on a few applications of the strong Markov property. With the latter in mind,
let $\mathcal{F}(W)=\{\mathcal{F}_t(W)\}_{t\ge0}$ denote the Gaussian filtration that is
generated by the white noise $\dot{W}$. 
It might help to recall that one constructs these
sigma-algebras as follows: First, for every $t>0$ we 
let $\mathcal{F}_t^0$ denote the sigma-algebra that is generated
by all Wiener integrals of the form $\int_{(0,t)\times\T}\phi\,\d W$
as $\phi$ ranges over $L^2(\T)$.
Then, we $\P$-complete every $\mathcal{F}_t^0$, call the completion $\mathcal{F}_t^1$, and finally
we make these right-continuous and call the resulting sigma-algebra $\mathcal{F}_t$; that is,
$\mathcal{F}_t(W) = \cap_{s>t}\mathcal{F}_s^1$ for all $t\ge0$.

Fix $\beta>3$. For every $n\ge1$ define
\[
	\tau(n) = \inf\left\{ t>0:\, \| u(t)\|_{L^\infty(\T)} \ge  
	(\log n)^{\beta/2} \| u(t)\|_{L^1(\T)}\right\}
	\qquad[\inf\varnothing=\infty].
\] 
The general theory of processes ensures that every $\tau(n)$ is a stopping time with respect to
the Gaussian filtration $\mathcal{F}(W)$; see Bass \cite{Bass} for
a modern account. With Lemma \ref{lem:WLOG} in mind, we then write
for every $n\ge1$,
\begin{align*}
	\P\left\{ \sup_{0\le t\le n}\frac{\|u(t)\|_{L^\infty(\T)}}{
		\|u(t)\|_{L^1(\T)}} \ge (\log n)^{\beta}\right\} &=
		\P\left\{ \tau(n)<n\ , \sup_{0\le t\le n-\tau(n)}\left[\frac{\|u(t)\|_{L^\infty(\T)}}{
		\|u(t)\|_{L^1(\T)}}\circ\vartheta_{\tau(n)}\right]\ge (\log n)^{\beta} \right\}\\
	&\le \P\left(\left. \sup_{0\le t\le n} \left[\frac{\|u(t)\|_{L^\infty(\T)}}{
		\|u(t)\|_{L^1(\T)}}\circ\vartheta_{\tau(n)}\right] \ge 
		(\log n)^{\beta} \,\ \right|\, \tau(n)<\infty\right),
\end{align*}
where $\vartheta$ is the same shift on the noise that we introduced
earlier in the context of the application of Kingman's theorem, and 
where we have appealed to the elementary inequality, 
\[\P(A\cap B)\le \P(B\mid A),\]
valid for all events $A,B$ in the underlying probability space.
Thanks to the continuity properties of $u$ and the compactness of $\T$,
\[
	\|u(\tau(n))\|_{L^\infty(\T)} =  (\log n)^{\beta/2} \|u(\tau(n))\|_{L^1(\T)}
	\qquad\text{a.s.\ on $\{\tau(n)<\infty\}$.}
\]
Therefore, the strong Markov property of $\{u(t)\}_{t\ge0}$
ensures that
\[
	\P\left\{ \sup_{0\le t\le n}\frac{\|u(t)\|_{L^\infty(\T)}}{
	\|u(t)\|_{L^1(\T)}} \ge (\log n)^{\beta} \right\} \le \sup_{\substack{v_0\in C_{>0}(\T):\\
	\|v_0\|_{L^\infty(\T)} =  (\log n)^{\beta/2}  \|v_0\|_{L^1(\T)}}}
	\P\left\{ \sup_{0\le t\le n}\frac{\|v(t)\|_{L^\infty(\T)}}{
	\|v(t)\|_{L^1(\T)}} \ge (\log n)^{\beta} \right\},
\]
where $v$ solves \eqref{SHE} subject to initial data $v_0$ that is being optimized under
``$\sup_{v_0\in C_{>0}(\T)\cdots}$,'' and driven by the $\vartheta_{\tau(n)}$-shift
of the Brownian sheet $W$, which is itself a Brownian sheet thanks to the strong
Markov property of the latter, viewed as an infinite-dimensional diffusion in its first variable
 [and with respect to the Gaussian filtration $\F(W)$] . 

Now let $v_0\in C_{>0}(\T)$ be an
otherwise arbitrary continuous and strictly positive 
function such that $\|v_0\|_{L^\infty(\T)}= (\log n)^{\beta/2} \|v_0\|_{L^1(\T)}$. Also,
let $v=\{v(t\,,x)\}_{t\ge0,x\in\T}$ denote the solution to the SPDE \eqref{SHE}, driven by
some space-time white noise $\dot{\mathcal{W}}$, and with the 
initial data $v_0$ that we just fixed in our minds. Define
\[
	V(t\,,x) = \frac{v(t\,,x)}{\|v_0\|_{L^1(\T)}}
	\qquad\text{for all $t\ge0$ and $x\in\T$}.
\]
The random field $V=\{V(t\,,x)\}_{t\ge0,x\in\T}$ solves the SPDE,
\begin{equation}\label{V}
	\partial_t V(t\,,x) = \partial^2_x V(t\,,x) + 
	\hat\sigma(V(t\,,x))\dot{\mathcal{W}}(t\,,x)
	\qquad\text{on $(0\,,\infty)\times\T$},
\end{equation}
subject to $V(0\,,x)= V_0(x)$, where $V_0\in C_{>0}(\T)$ solves
$\|V_0\|_{L^\infty(\T)} = (\log n)^{\beta/2}$ and $\|V_0\|_{L^1(\T)}=1$, and
\[
	\hat{\sigma}(z) = \|v_0\|_{L^1(\T)}^{-1}\sigma\left( 
	z\|v_0\|_{L^1(\T)}\right)\qquad\text{for all
	$z\in\R$}.
\]
Since $\lip(\hat\sigma)=\lip(\sigma)$, any result about the solution $u$ [to \eqref{SHE}]
that depends on $\sigma$ only through $\lip(\sigma)$ can be applied to $V$, regardless 
of our choice of $v_0\in C_{>0}(\T)$. In this way we find the following.  
First define $\mathfrak{S}(N) =\mathfrak{S}(N,\lip(\sigma))$ 
to be the class of all predictable space-time
random fields $V=\{V(t\,,x)\}_{t\ge0,x\in\T}$  that
solve the SPDE \eqref{V} when $\lip(\hat{\sigma})=\lip(\sigma)$,
subject to some non-random initial data $V_0\in C_{>0}(\T)$ that satisfies
$\|V_0\|_{L^1(\T)}=1$ and $\|V_0\|_{L^\infty(\T)}\le N$.
Also define
\begin{equation}\label{NT}\begin{split}
	N &= N(n) =  (\log n)^{\beta/2}\quad\text{to simplify the typesetting, and}\\
	\mathcal{T}_V(R) &= \inf\left\{ t>0:\, \|V(t)\|_{L^\infty(\T)} \ge R\|
	V(t)\|_{L^1(\T)} \right\}\quad\text{for all $R\ge0$}
	\qquad
	[\inf\varnothing=\infty].
\end{split}\end{equation}
[Thus, for example, the elementary inequality
$\|f\|_{L^1(\T)}\le 2\|f\|_{L^\infty(\T)}$, valid for all $f\in L^\infty(\T)$,
tells us that $\mathcal{T}_V(R)=0$ when $0\le R\le\frac12$.]

Then we have
\begin{equation}\label{PsupP}\begin{split}
	\P\left\{ \sup_{0\le t\le n}\frac{\|u(t)\|_{L^\infty(\T)}}{
		\|u(t)\|_{L^1(\T)}} \ge (\log n)^{\beta} \right\} 
           &\le \sup_{V\in\mathfrak{S}(N)}
		\P\left\{ \sup_{0\le t\le n}\frac{\|V(t)\|_{L^\infty(\T)}}{
		\|V(t)\|_{L^1(\T)}} \ge (\log n)^{\beta} \right\}\\
           &\le \sup_{V\in\mathfrak{S}(N)}
		\P\left\{ \mathcal{T}_V(N^2) \le  \exp\left(N^{2/\beta}\right) \right\}.
\end{split}\end{equation}

The general theory of stochastic processes tells us that $\mathcal{T}_V(R)$
is a stopping time with respect to the Gaussian filtration 
$\mathcal{F}(\mathcal{W})$ for every $R\ge0$; see Bass \cite{Bass}.

Define
\[
	\mathcal{P}(m\,,t)  = \sup_{V\in\mathfrak{S}(N)}\P\left\{ \mathcal{T}_V(m) \le t \right\}
	\qquad\text{for all $t,m>0$}.
\]

According to \eqref{PsupP}, we may write
\begin{equation}\label{Boop}
	\P\left\{ \sup_{0\le t\le n}\frac{\|u(t)\|_{L^\infty(\T)}}{
	\|u(t)\|_{L^1(\T)}} \ge (\log n)^{\beta} \right\} \le \mathcal{P}\left( N^2, \exp\left(N^{2/\beta}\right)\right)
	\le J_1 + J_2 + J_3,
\end{equation}
where
\begin{align*}
	J_1 & = \mathcal{P}(N^2,N^{-\gamma}),\\
	J_2 & = \sup_{V\in\mathfrak{S}(N)}\P\left\{ \|V(N^{-\gamma})\|_{L^\infty(\T)} \ge N\right\},\\
	J_3 & = \sup_{V\in\mathfrak{S}(N)}\P\left\{ \|V(N^{-\gamma})\|_{L^\infty(\T)} \le N\ ,
		\sup_{N^{-\gamma} \le t\le  \exp\left(N^{2/\beta}\right)}
		\frac{\|V(t)\|_{L^\infty(\T)}}{\|V(t)\|_{L^1(\T)}} \ge N^2\right\}.
\end{align*}
We study the respective behaviors of $J_1$, $J_2$, and $J_3$ next, and in this order.

\begin{align*}
	J_1 & \le\sup_{V\in\mathfrak{S}(N)}\P\left\{ \sup_{0\le t\le N^{-\gamma}} 
		\|V(t)\|_{L^\infty(\T)} \ge 2N\right\}\\
	&\ + \sup_{V\in\mathfrak{S}(N)}\P\left\{ 
		\sup_{0\le t\le N^{-\gamma}} \|V(t)\|_{L^\infty(\T)} < 2N \ ,
		\inf_{0\le t\le N^{-\gamma}} \|V(t)\|_{L^1(\T)} \le \frac2N\right\}\\
	& \le\sup_{V\in\mathfrak{S}(N)}\P\left\{ \sup_{0\le t\le N^{-\gamma}} 
		\|V(t)\|_{L^\infty(\T)} \ge 2N\right\}
		+ \sup_{V\in\mathfrak{S}(N)}
		\P\left\{\inf_{0\le t\le N^{-\gamma}} \|V(t)\|_{L^1(\T)} \le \frac2N\right\},\\
	&= P_1 + P_2,
\end{align*}
notation being clear from context. For any $\gamma \in \left(\frac43\,, 2\right)$,  the second part of Proposition \ref{pr:KPZ1} ensures that
there exists $K = K(\lip(\sigma))>0$ such that 
\[
	P_1 \le K\exp\left(- K^{-1} N^{(3\gamma-4)/2}\right)\qquad\text{uniformly for all $N\ge 1$}.
\]
Moreover, we may apply 
Proposition \ref{pr:valleys}  
in order to see that there exists $L=L(\lip(\sigma))>0$ such that,
for all $N\ge 4$ [so that $2/N \leq 1/2$], 
%and for all $\frac{3}{13} < \theta <\frac14$,
\[
	P_2 \le \sup_{V\in\mathfrak{S}(N)}\P\left\{\inf_{0\le t\le N^{-\gamma}}
	\|V(t)\|_{L^1(\T)} \le  \frac{2}{N} \right\}
	\le L \exp\left( - L^{-1} N^{(3\gamma-4)/3} \right),
\]
and the very same inequality holds for all $N\ge1$ if we increase the numerical value of $L$ by a little,
as necessary. Combine to see that there exists a constant $K_1=K_1(\lip(\sigma))>0$
such that
\[
	J_1 \le K_1 \exp\left( - K_1^{-1} N^{(3\gamma-4)/3} \right) \qquad\text{for all $N\ge1$}.
\]

The estimation of $J_2$ is slightly simpler. Indeed, the first assertion of Proposition 
\ref{pr:KPZ1}  readily yields the existence of a number
$K_2=K_2(\lip(\sigma))>0$ such that
\[
	J_2 \le K_2\exp\left( - K_2^{-1} N^{(3\gamma-4)/2} \right) \qquad\text{for all $N\ge1$}. 
\]

Finally, we apply the  Markov property at time $N^{-\gamma}$ in order to see that
\[
	J_3 \le \mathcal{P}\left( N^2 ,  \exp\left(N^{2/\beta}\right) - N^{-\gamma}\right).
\]
Combine the preceding bounds for $J_1,J_2,J_3$ and apply \eqref{Boop}
in order to find that 
\[
	\mathcal{P}\left( N^2, \exp\left(N^{2/\beta}\right)\right) \le L_0\exp\left( - L_0^{-1} N^{(3\gamma-4)/3} \right) 
	+\mathcal{P}\left( N^2, \exp\left(N^{2/\beta}\right)-N^{-\gamma}\right)\qquad\text{for all $N\ge1$},
\]
where $L_0=\max(K_1,K_2)=L_0(\lip(\sigma))>0$. Because $L_0$ does not depend on $N$, and $N$
is an arbitrary real variable $\ge1$, we may iterate the above. Thus, for example
\begin{align*}
	\mathcal{P}\left( N^2, \exp\left(N^{2/\beta}\right)\right)  &\le 2L_0\exp\left( - L_0^{-1} N^{(3\gamma-4)/3} \right) 
		+\mathcal{P}\left( N^2,  \exp\left(N^{2/\beta}\right)-2N^{-\gamma}\right)\\
	&\le 3L_0\exp\left( - L_0^{-1} N^{(3\gamma-4)/3} \right) 
		+\mathcal{P}\left( N^2, \exp\left(N^{2/\beta}\right)-3N^{-\gamma}\right)\\
	&\ \ \vdots\\
	&\le \ell_N L_0\exp\left( - L_0^{-1} N^{(3\gamma-4)/3} \right) 
		+\mathcal{P}\left( N^2,  \exp\left(N^{2/\beta}\right) -\ell_N N^{-\gamma}\right),
\end{align*}
where $\ell_N = \lfloor N^{\gamma} \exp\left( N^{2/\beta}\right) \rfloor $. Since $\beta>3$,  we can  choose $\gamma \in \left(\frac43\,, 2\right)$  such  that 
\begin{equation}\label{eq:exponent} \frac{3\gamma - 4}{3} > \frac{2}{\beta}
\quad \Leftrightarrow \quad \frac{\beta(3\gamma - 4)}{6}>1.
\end{equation}
Since $t\mapsto\mathcal{P}(N^2,t)$
is monotone, it follows the preceding and \eqref{Boop} that
\begin{equation}\label{PPP}\begin{split}
	\P\left\{ \sup_{0\le t\le n}\frac{\|u(t)\|_{L^\infty(\T)}}{
		\|u(t)\|_{L^1(\T)}} \ge (\log n)^{\beta} \right\} &\le L_0 N^{\gamma}
		\exp\left( - L_0^{-1} N^{(3\gamma-4)/3} +N^{2/\beta} \right) 
		+  \mathcal{P}\left( N^2,N^{-\gamma}\right)\\
	&=L_0  N^{\gamma}  \exp\left( - L_0^{-1} N^{(3\gamma-4)/3} + N^{2/\beta} \right)  + J_1,\\
	&\le L_1\left( N^{\gamma}+1\right)\exp\left( - L_1^{-1} N^{(3\gamma-4)/3} \right),
\end{split}\end{equation}
thanks to a final appeal to our estimate for $J_1$, and for a suitably large choice of
$L_1=L_1(\lip(\sigma))>0$. A final appeal to \eqref{NT} allows
to change variables back from $N$ to $n$ and we obtain 
\begin{equation} \label{eq:Borel-Cantelli-sup-inf}
	\sum_{n=1}^\infty \P\left\{ \sup_{0\le t\le n}\frac{\|u(t)\|_{L^\infty(\T)}}{
	\|u(t)\|_{L^1(\T)}} \ge (\log n)^{\beta} \right\}\leq  L_1 \sum_{n=1}^\infty (1+\log n)^{\gamma\beta/2}\,  \exp\left( - L_1^{-1} (\log n)^{\beta(3\gamma-4)/6} \right)   <   \infty,
\end{equation}
for every $\beta>3$. Note the sum above converges, thanks to \eqref{eq:exponent}. Lemma \ref{lem:WLOG} now implies Theorem \ref{th:S/M}.\qed

\section{Proof of Theorem \ref{th:inf}} \label{sec:proof_main}

The proof of Theorem \ref{th:inf} is divided into a series of natural steps that we
separate as individual subsections below. Throughout, we will appeal also to the
following fact from elementary probability theory.

\begin{lemma}\label{lem:cond:prob}
	For all integers $n\ge2$ and
	events $E_1,\ldots,E_n$ in the underlying probability space,
	\[
		\P(E_n) \le \sum_{j=1}^{n}\P\left(
		E_j \mid E_0^c\cap \cdots\cap E_{j-1}^c \right),
		\qquad\text{where $E_0=\varnothing$.}
	\]
\end{lemma}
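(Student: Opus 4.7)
The plan is to bound $\P(E_n)$ by the probability of the union $\bigcup_{j=1}^n E_j$ and then split that union in the standard telescoping way into disjoint pieces indexed by ``the first $E_j$ that occurs.''

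Concretely, I would begin by noting the trivial inclusion $E_n \subseteq \bigcup_{j=1}^n E_j$, which gives
\[
\P(E_n) \le \P\!\left( \bigcup_{j=1}^n E_j \right).
\]
Next I would write the union on the right as a disjoint union by peeling off, for each $j$, the portion of $E_j$ on which none of the previous events has occurred:
\[
\bigcup_{j=1}^n E_j = \bigsqcup_{j=1}^n \left( E_j \cap E_1^c \cap \cdots \cap E_{j-1}^c \right),
\]
with the convention $E_0^c = \Omega$ so that the $j=1$ term is just $E_1$. Taking probabilities yields
\[
\P(E_n) \le \sum_{j=1}^n \P\!\left( E_j \cap E_1^c \cap \cdots \cap E_{j-1}^c \right).
\]

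To finish, I would factor each summand through the definition of conditional probability: for each $j$,
\[
\P\!\left( E_j \cap E_1^c \cap \cdots \cap E_{j-1}^c \right) = \P\!\left( E_j \mid E_1^c \cap \cdots \cap E_{j-1}^c \right)\,\P\!\left( E_1^c \cap \cdots \cap E_{j-1}^c \right) \le \P\!\left( E_j \mid E_1^c \cap \cdots \cap E_{j-1}^c \right),
\]
since the second factor is $\le 1$. Summing over $j$ gives exactly the claimed inequality. The one cosmetic nuisance is the case where $\P(E_1^c \cap \cdots \cap E_{j-1}^c)=0$: then the summand on the left is zero, and one simply adopts the standard convention that the corresponding conditional probability may be defined as $0$ (or anything in $[0,1]$) without affecting the bound. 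There is no real obstacle here; the lemma is an immediate bookkeeping statement, and the only choice is to present it in the disjoint-union form rather than via induction on $n$.
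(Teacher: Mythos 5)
Your proof is correct, but it takes a different route from the paper's. The paper works with complements: writing $p_j=\P(E_j\mid E_1^c\cap\cdots\cap E_{j-1}^c)$, it bounds $\P(E_n^c)\ge\P(E_1^c\cap\cdots\cap E_n^c)=\prod_{j=1}^n(1-p_j)$ by the chain rule, and then invokes the elementary inequality $\prod_{j=1}^n(1-p_j)\ge 1-\sum_{j=1}^n p_j$ (checked by induction) to conclude $\P(E_n)\le\sum_j p_j$. You instead go through the union: $\P(E_n)\le\P(\bigcup_j E_j)$, disjointify by first occurrence, and bound each piece via $\P(E_j\cap E_1^c\cap\cdots\cap E_{j-1}^c)=\P(E_j\mid E_1^c\cap\cdots\cap E_{j-1}^c)\,\P(E_1^c\cap\cdots\cap E_{j-1}^c)\le\P(E_j\mid E_1^c\cap\cdots\cap E_{j-1}^c)$. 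Your argument is arguably more direct, since it needs no auxiliary product inequality and no induction; the paper's version has the minor aesthetic advantage of producing the exact identity $\P(\cap_j E_j^c)=\prod_j(1-p_j)$ along the way. Both proofs share the same cosmetic issue when a conditioning event has probability zero, which you correctly dispose of by convention (as the corresponding unconditional summand is then zero anyway).
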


\begin{proof}
	Define $p_1 = \P(E_1) $ and set $p_j = \P(E_j\mid E_1^c\cap\cdots\cap E_{j-1}^c)$
	for $j=2,\ldots,n$. Then,
	\[
		\P(E_n^c) \ge \P\left(E_1^c\cap\cdots\cap E_n^c\right) = \prod_{j=1}^n (1-p_j).
	\]
	The result follows from this and the elementary inequality 
	$\prod_{j=1}^n(1-p_j) \ge 1- \sum_{j=1}^n p_j$, valid for every $0\le p_1,\ldots,p_n\le1$
	as can be checked directly by induction.
\end{proof}

\subsection{The influence of the heat kernel}
Let us start with a small technical result about the smoothing action of the heat
semigroup on the torus. It is well known that, in arbitrary positive time, the heat semigroup 
maps an integrable function to a smooth one. The following provides a quantitative bound
for that smoothness, where ``smoothness'' is  interpreted here in terms of optimal Lipschitz 
constants.

\begin{lemma}\label{lem:delta:p}
	If $t>0$ and $f\in L^1(\T)$, then $P_tf$ is Lipschitz continuous, and
	\[
		\lip(P_tf) \le \frac{7}{\sqrt{t}}\left( 1 + 
		\frac{1}{\sqrt t}\right)\|f\|_{L^1(\T)}.
	\]
\end{lemma}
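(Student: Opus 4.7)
The strategy is to reduce the Lipschitz estimate for $P_t f$ to a pointwise bound on the spatial derivative $p_t'$, and then to bound $\sup_{z\in\T}|p_t'(z)|$ either directly from the image formula \eqref{p} or, more cleanly, via the Fourier (spectral) representation of the heat kernel.

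First, by \eqref{P} and the mean value theorem, for all $x,x'\in\T$,
\[
|(P_tf)(x)-(P_tf)(x')|\le\int_\T |p_t(x-y)-p_t(x'-y)|\,|f(y)|\,\d y\le\|f\|_{L^1(\T)}\,|x-x'|\,\sup_{z\in\T}|p_t'(z)|,
\]
which reduces the lemma to the pointwise claim $\sup_{z\in\T}|p_t'(z)|\le 7 t^{-1/2}(1+t^{-1/2})$.

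For this pointwise bound I would use the Fourier series on $\T=\R/(2\Z)$,
\[
p_t(z)=\tfrac12+\sum_{k=1}^\infty e^{-\pi^2 k^2 t}\cos(\pi k z),
\]
which coincides with \eqref{p} by Poisson summation. The series obtained by termwise differentiation converges uniformly because of the rapidly-decaying Gaussian coefficients, so
\[
|p_t'(z)|\le\pi\sum_{k=1}^\infty k\,e^{-\pi^2 k^2 t}.
\]
A routine sum-to-integral comparison around the unique maximum of the unimodal function $x\mapsto x\,e^{-\pi^2 x^2 t}$ (attained at $x=1/(\pi\sqrt{2t})$ with value $(2\pi^2 e t)^{-1/2}$ and integral $(2\pi^2 t)^{-1}$) produces a bound of order $t^{-1/2}+t^{-1}$, with universal constants comfortably below those in the target. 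No serious obstacle is expected: the only real choice is whether to carry out this last estimate via the Fourier representation, as above, or directly from \eqref{p} by splitting the $n=0$ summand from the Gaussian tail and invoking $|u|e^{-u^2/(4t)}\le\sqrt{2t/e}$ together with a Gaussian-tail sum. Either route is purely numerical.
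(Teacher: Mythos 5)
Your proposal is correct. The first step --- reducing $\lip(P_tf)$ to $\|f\|_{L^1(\T)}\sup_z|p_t'(z)|$ via the mean value theorem --- is exactly what the paper does. Where you diverge is in how you bound $\sup_z|p_t'(z)|$: the paper differentiates the image-charge series \eqref{p} term by term and splits the sum into the near summands $|n|\le1$ (handled by the elementary inequality $|z|\e^{-z^2/(4t)}\le\sqrt{2t/\e}$) and the tail $|n|\ge2$ (handled by a sum-to-integral comparison of $\sum_n n\e^{-n^2/(4t)}$, using $|x-y+2n|\asymp|n|$ there), which is precisely your stated ``alternative route.'' Your primary route instead passes to the spectral representation $p_t(z)=\tfrac12+\sum_{k\ge1}\e^{-\pi^2k^2t}\cos(\pi kz)$ and bounds
\[
	|p_t'(z)|\le\pi\sum_{k\ge1}k\,\e^{-\pi^2k^2t}
	\le\pi\left(\int_0^\infty x\,\e^{-\pi^2x^2t}\,\d x+\sup_{x>0}x\,\e^{-\pi^2x^2t}\right)
	=\frac{1}{2\pi t}+\frac{1}{\sqrt{2\e t}},
\]
which is well within $7t^{-1/2}(1+t^{-1/2})$. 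This buys a cleaner computation (a single unimodal sum, no case analysis on $n$) and a substantially sharper constant, at the mild cost of invoking Poisson summation to identify the two forms of the kernel; since the lemma is only ever used in the crude form $\lip(P_tf)\lesssim t^{-1}\|f\|_{L^1(\T)}$ for $t\in(0,2]$, either set of constants is equally serviceable.
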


\begin{proof}
	We can differentiate the series representation \eqref{p} term by term in order to see that
	\[
		|\partial_x  p_t(x\,,y)| \le  \sum_{n=-\infty}^\infty \frac{|x-y+2n|}{t\sqrt{4\pi t}}
		\exp \left( -\frac{|x-y+2n|^2}{4t}   \right)
		\qquad\text{pointwise}.
	\]
	If $|n|\ge2$, then certainly $|x-y|\le 2\le |n|$,
	whence also $|x-y+2n|\ge 2|n|-|x-y|\ge|n|$
	for all $x,y\in\T$. Since $|x-y+2n|\le2(1+|n|)\le3|n|$ as well, it follows that
	\[
		\sum_{n\in\Z:|n|\ge2}\frac{|x-y+2n|}{t\sqrt{4\pi t}}
		\exp \left( -\frac{|x-y+2n|^2}{4t}   \right)
		\le\frac{3}{t\sqrt{4\pi t}}\sum_{n\in\Z:|n|\ge2}|n|\e^{-n^2/(4t)}.
	\]
	Now,
	\begin{align*}
		\sum_{n=1}^\infty n\e^{-n^2/(4t)}
			&=\sum_{n=1}^\infty \int_{n-1}^n
			n\e^{-n^2/(4t)}\,\d y
			\le\sum_{n=1}^\infty \int_{n-1}^n
			(1+y)\e^{-y^2/(4t)}\,\d y\\
		&= \int_0^\infty\e^{-y^2/(4t)}\,\d y + 
			\int_0^\infty y\e^{-y^2/(4t)}\,\d y
			= \sqrt{\pi t} + 2t < \sqrt{4\pi t}(1+\sqrt t).
	\end{align*}
	Therefore, by symmetry,
	\[
		\sum_{n\in\Z:|n|\ge2}\frac{|x-y+2n|}{t\sqrt{4\pi t}}
		\exp \left( -\frac{|x-y+2n|^2}{4t}   \right)
		\le\frac{6}{t\sqrt{4\pi t}} \sqrt{4\pi t}(1+\sqrt t)
		= \frac{6}{\sqrt{t}} \left( 1 + \frac{1}{\sqrt t}\right).
	\]
%	\[
%		\sum_{n\in\Z:|n|\ge2}|n|
%		\exp \left( -\frac{n^2}{4t}   \right) \le 2\int_0^\infty
%		x\exp\left( -\frac{x^2}{4t}\right)\,\d x=4t\int_0^\infty \e^{-y}\,\d y
%	\]
%	$\d y = x\,\d x/(2t)$
	When $|n|\le1$, we use the fact that $|z|\exp(-z^2/(4t))\le\sqrt{2t/\e}$
	for all $z\in\R$ and $t>0$  in order to be able to say that
	\[
		\sum_{n\in\Z:|n|\le1}\frac{|x-y+2n|}{t\sqrt{4\pi t}}
		\exp \left( -\frac{|x-y+2n|^2}{4t}   \right)
		\le \frac{3}{t\sqrt{2\pi\e}} < \frac{1}{\sqrt{t}} \left( 1 + \frac{1}{\sqrt t}\right).
	\]
	Combine these bounds in order to find that
	\[
		\lip(p_t(\cdot\,,y))  = \sup_{x\in\T}|\partial_x  p_t(y\,,x)| 
		\le  \frac{7}{\sqrt{t}}\left( 1 + \frac{1}{\sqrt t}\right)
		\qquad\text{for all $t>0$ and $y\in\T$}.
	\]
	This bound does the job since
	$| (P_tf)(x) - (P_tf)(z) |\le
	\int_\T \left| p_t(x\,,y)  -  p_t(z\,,y)\right| |f(y)|\,\d y$.
\end{proof}

Let 
\begin{equation}\label{ball}
	B(x\,,r)=\{x+r z\in\T:\, z\in\T\}
\end{equation}
denote the closed
$r$-ball about $x\in\T$ for all $r\ge0$. 
Because of  \eqref{p},
\begin{equation}\label{A} 
	\int_{B(a,c\sqrt t)} p_t(x\,,y) \,\d y
        = \frac{1}{\sqrt{4\pi t}}\sum_{n=-\infty}^\infty
	\int_{a-c\sqrt t}^{a+c\sqrt t} \exp\left( -\frac{(y-x+2n)^2}{4t}\right)\d y,
\end{equation}
for every $t>0$, $c\in[0\,,1/\sqrt t]$, and $a,x\in\T$.  For the right-hand side it might help
to recall the convention that we are
identifying points in $\T$ with those in $[-1\,,1]$. Still,
to be completely clear, the left-hand side
is an integral  on the abelian group $\T$ against the Haar measure $[\d y]$, 
and the right-hand side is a sum of
integrals over the real line against Lebesgue's measure [also denoted by $\d y$].

\begin{lemma}\label{lem:p:LB}
	 There exists a constant $A \in (0\,,1)$ such that  for every $c\geq 1$, $t>0$, and $a,x\in\T$,
	\[
		\int_{B(a,c\sqrt t)} p_t(x\,,y)\,\d y \ge A
		\,\1_{B(a,(c+1)\sqrt t)}(x).
	\]
\end{lemma}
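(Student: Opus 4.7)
The plan is to peel off two trivial cases and then reduce the remaining one to a single Gaussian tail estimate on $\R$. The right-hand side vanishes unless $x \in B(a, (c+1)\sqrt t)$, so that case requires nothing; and if $c\sqrt t \ge 1$ then $B(a, c\sqrt t)$ is already all of $\T$, so the left-hand side equals $\int_\T p_t(x, y)\,\d y = 1$ and any choice $A \in (0, 1)$ suffices. The interesting case is therefore $c\sqrt t < 1$ together with $x \in B(a, (c+1)\sqrt t)$.

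In that case I would lift $x$ to the representative $\tilde x \in \R$ that realizes the torus distance to $a$, so that $|\tilde x - a| = d_\T(x, a) \le (c+1)\sqrt t$. Because $c\sqrt t < 1$, the set $B(a, c\sqrt t)$ lifts faithfully to the single interval $[a - c\sqrt t, a + c\sqrt t]\subset\R$ without wrap-around. Plugging the series representation \eqref{p} of $p_t$ into the integral and retaining only the $n = 0$ image (the other images contribute nonnegatively) yields
\[
\int_{B(a, c\sqrt t)} p_t(x, y)\,\d y \;\ge\; \int_{a - c\sqrt t}^{a + c\sqrt t} \frac{1}{\sqrt{4\pi t}}\exp\!\left(-\frac{(\tilde x - y)^2}{4 t}\right)\d y.
\]

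Finally, after the standard normalization $z = (y - \tilde x)/\sqrt{2 t}$, this right-hand side becomes $\P\bigl(Z \in [-\alpha - c/\sqrt 2,\, -\alpha + c/\sqrt 2]\bigr)$, where $Z$ is a standard normal and $\alpha := (\tilde x - a)/\sqrt{2 t}$ obeys $|\alpha| \le (c+1)/\sqrt 2$. Since the interval of integration has fixed length $c\sqrt 2$ and is pushed further from the origin as $|\alpha|$ grows, the probability is minimized at the extreme $|\alpha| = (c+1)/\sqrt 2$, where by symmetry it equals $\P\bigl(1/\sqrt 2 \le Z \le (2c + 1)/\sqrt 2\bigr)$. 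This last expression is increasing in $c$ on $[1, \infty)$, so setting
\[
A := \P\bigl(1/\sqrt 2 \le Z \le 3/\sqrt 2\bigr) \in (0, 1)
\]
completes the argument. I do not anticipate any serious obstacle; the only point of care is to choose the lift $\tilde x$ so that the $n = 0$ Gaussian in \eqref{p} is indeed the one centered nearest to $a$, which makes the crude ``drop all but one term'' estimate quantitatively useful.
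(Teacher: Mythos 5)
Your proposal is correct and follows essentially the same route as the paper: restrict to $x\in B(a,(c+1)\sqrt t)$, keep only the $n=0$ Gaussian image in the series \eqref{p}, and bound the resulting normal probability below by $\P\{1/\sqrt2\le Z\le 3/\sqrt2\}$ using $|x-a|\le(c+1)\sqrt t$ and $c\ge1$; you even arrive at the identical constant $A$. The only difference is that you explicitly dispose of the case $c\sqrt t\ge1$ and the choice of lift of $x$, which the paper handles implicitly through the restriction $c\in[0,1/\sqrt t]$ in \eqref{A}.
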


\begin{proof}
	Because the integral in question is manifestly $\ge0$, we may (and will)
	consider only $x\in B(a\,,(c+1)\sqrt t)$.
	In \eqref{A}, we drop all summands except the one that corresponds to $n=0$  to see that   
	\begin{align*} 
		\int_{B(a,c\sqrt t)} p_t(x\,,y)\,\d y &\geq  \frac{1}{\sqrt{4\pi t}} 
		\int_{a-c\sqrt t}^{a+c\sqrt t} \exp\left( -\frac{(y-x)^2}{4t}\right)\d y\\
		%& = \P \left( a-c\sqrt{t} \leq x+ \sqrt{2t} Z \leq a+c\sqrt{t} \right)\\
		&=\P\left\{ \frac{a-x}{\sqrt{2t}} - \frac{c}{\sqrt{2}} \leq Z \leq 
			\frac{a-x}{\sqrt{2t}} +\frac{c}{\sqrt{2}} \right\},
	\end{align*}
	where $Z$ is distributed according to the standard normal distribution on $\R$. 
	Since $x$ is within $a\pm (c+1)\sqrt t$ and $c\geq 1$, it follows that
	\begin{equation}\label{def:A}
		\int_{B(a,c\sqrt t)} p_t(x\,,y)\,\d y
		\geq  \P\left\{ \frac{1}{\sqrt{2}} \leq Z \leq \frac{2c+1}{\sqrt{2}} \right\}
		\geq \P\left\{ \frac{1}{\sqrt{2}} \leq Z \leq \frac{3}{\sqrt{2}} \right\} =:A.
	\end{equation}
	This completes the proof.
\end{proof}

\subsection{The influence of the noise}
Lemmas \ref{lem:delta:p} and \ref{lem:p:LB} concern the heat kernel/semigroup.
We now begin studying the noise $\dot{W}$. In order to do that, let us
choose and fix some numbers
\begin{equation}\label{alpha}
 \beta>3 \qquad \text{and} \qquad  \alpha>\frac{4\beta}{3} + 1 > 5 
 \end{equation}
and define successively,
\begin{equation}\label{def:t_k}
	t_1=1, \qquad t_2=2,\qquad\text{and}\qquad
	t_{k+1} = t_k + \frac{1}{(\log k)^\alpha} \quad \text{for $k\geq 2$}.
\end{equation}
 Note that $t_k\ge 1$ for all $k\in\N$, and  $k(\log k)^{-\alpha} \leq t_k \leq k$ for all $k\geq 2$. 
In particular, we have 
\begin{equation}\label{t_k}
	\frac{\log k}{2}  \le \log  t_k \le \log k 
	\qquad\text{for all $k\geq 1$}.
\end{equation} 

With the above sequence of $t_k$'s in place, 
we consider the events $\bcA_{1,1},\bcA_{2,1},\ldots$
defined via the following:
\begin{equation*} 
	\bcA_{k,1}  =\left\{ \omega\in\Omega:\,
	\sup_{t\in(0,t_k]}\frac{\|u(t)\|_{L^\infty(\T)}(\omega)}{\|u(t)\|_{L^1(\T)}(\omega)}
	<  (\log t_k)^\beta  \right\}
\end{equation*}
These are large sets in the underlying probability space when $k\gg1$. In fact, we have the following.
\begin{lemma}\label{lem:P(A1)}
	 There exist numbers $c=c(\alpha\,,\beta\,,\lip(\sigma))>0$ and 
	 $\eta_1=\eta_1(\beta)>1$ such that 
	\[
		\P(\bcA_{k,1}^c) \lesssim \exp\left( -c  (\log k)^{\eta_1}  \right)
		\quad\text{uniformly for all $k\in\N$.}
	\] 
\end{lemma}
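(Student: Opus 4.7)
Observe that the event $\bcA_{k,1}^c$ is precisely
\[
	\left\{\sup_{t\in(0,t_k]}\frac{\|u(t)\|_{L^\infty(\T)}}{\|u(t)\|_{L^1(\T)}}\ge(\log t_k)^\beta\right\},
\]
which has the same structure as the probability that is estimated in \eqref{PPP} at the end of the proof of Theorem \ref{th:S/M}, with ``$n$'' replaced by the real number $t_k$. My plan is to invoke \eqref{PPP} in exactly that way.

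Before doing so, I need to verify that the derivation of \eqref{PPP} never uses the integrality of $n$. A re-reading confirms this: the strong Markov reduction preceding \eqref{Boop} uses only the Markov property of $\{u(t)\}_{t\ge0}$ at the stopping times $\tau(n)$; the decomposition into $J_1,J_2,J_3$ and the quantitative bounds on $J_1,J_2$ coming from Propositions \ref{pr:KPZ1} and \ref{pr:valleys} work for any $N\ge1$; and the iteration
\[
	\mathcal{P}(N^2,T)\le L_0\exp\bigl(-L_0^{-1}N^{(3\gamma-4)/3}\bigr)+\mathcal{P}\bigl(N^2,T-N^{-\gamma}\bigr)
\]
is valid for arbitrary $T>0$, with the floor in $\ell=\lfloor TN^\gamma\rfloor$ absorbing any non-integrality. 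Consequently, fixing any $\gamma\in(4/3,2)$ with $\beta(3\gamma-4)/6>1$ (which is possible since $\beta>3$, cf.\ \eqref{eq:exponent}) and setting $N_k=(\log t_k)^{\beta/2}$, the estimate \eqref{PPP} yields
\[
	\P(\bcA_{k,1}^c)\le L_1\bigl(N_k^\gamma+1\bigr)\exp\bigl(-L_1^{-1}N_k^{(3\gamma-4)/3}\bigr),
\]
with $L_1=L_1(\beta,\lip(\sigma))$.

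Finally, I would use the bound $\log t_k\ge(\log k)/2$ from \eqref{t_k} to obtain
$N_k^{(3\gamma-4)/3}\gtrsim(\log k)^{\beta(3\gamma-4)/6}$ and $N_k^\gamma\le(\log k)^{\gamma\beta/2}$, so that
\[
	\P(\bcA_{k,1}^c)\lesssim(\log k)^{\gamma\beta/2}\exp\bigl(-c(\log k)^{\beta(3\gamma-4)/6}\bigr)
\]
uniformly in $k$. Since $\beta(3\gamma-4)/6>1$, for any fixed $\eta_1\in(1,\beta(3\gamma-4)/6)$ the exponential factor swamps the polylogarithmic prefactor for large $k$, and small values of $k$ are harmlessly absorbed into the implicit constant using the trivial bound $\P(\bcA_{k,1}^c)\le1$. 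This yields $\P(\bcA_{k,1}^c)\lesssim\exp(-c(\log k)^{\eta_1})$ with $\eta_1=\eta_1(\beta)>1$, as required.

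The only subtle point is the verification that the derivation of \eqref{PPP} is genuinely continuous in $n$; once that bookkeeping is in place, nothing new is required beyond what has already been proved in \S\ref{subsec:Pf:Th:S/M}.
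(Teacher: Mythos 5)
Your proposal is correct and is essentially the paper's own proof: the paper likewise deduces the lemma by applying \eqref{PPP} (together with the choice of $\gamma$ in \eqref{eq:exponent}) with $n$ replaced by $t_k$, and then converts $\log t_k$ into $\log k$ via \eqref{t_k}, taking $\eta_1=\beta(3\gamma-4)/6>1$. Your extra care in checking that the derivation of \eqref{PPP} is insensitive to the integrality of $n$, and in absorbing the polylogarithmic prefactor by shrinking $\eta_1$ slightly, is exactly the bookkeeping the paper leaves implicit.
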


\begin{proof}
	When used in conjunction, \eqref{PPP} and \eqref{eq:Borel-Cantelli-sup-inf} imply that there exist  
	constants $L_1=L_1(\lip(\sigma))>0$ and  
	$\gamma_1=\gamma_1(\beta)\in \left( \frac43\,, 2\right)$  such that 
	$(3\gamma_1-4)\beta/6>1$ and
	\[
		\P(\bcA_{k,1}^c)
		\le L_1 (1+\log t_k)^{\gamma_1 \beta/2} 
		\exp\left( - L_1^{-1} (\log t_k)^{\beta(3\gamma_1-4)/6} \right)  ,
	\]
	uniformly for all $k\in\N$. We can choose $\eta_1=\beta(3\gamma_1 - 4)/6$,
	and enlarge $L_1=L_1(\alpha\,,\beta\,,\lip(\sigma))$
	if needed, in order to deduce the lemma from this, \eqref{alpha}, and  \eqref{t_k}. 
\end{proof}

Next, we define a sequence $v_1,v_2,\ldots$ of space-time random fields as follows:
\begin{equation}\label{v_k}
	v_k(t) = \frac{u(t)}{\|u(t_{k-1})\|_{L^1(\T)}}
	\qquad \text{for every $t\ge t_{k-1}$,}
\end{equation}
Consider the events,
\begin{equation}\label{A:k,2}
	\bcA_{k,2} =   \left\{\omega\in\Omega:\,
	\frac{1}{2}\leq \|v_k(t)\|_{L^1(\T)}(\omega) \leq 2
	\quad
	\text{for every $t\in[t_{k-1}\,,t_k]$}\right\},
\end{equation}
and recall that $\eta_1=\eta_1(\beta)>0$ is the number given in Lemma \ref{lem:P(A1)}.  Then we have the following result.

\begin{lemma}\label{lem:P(A2)}
	There exist  numbers $c=c(\alpha\,, \beta\, , \lip(\sigma))>0$ and 
	$\eta_2=\eta_2(\alpha\,, \beta) \in (1\,, \eta_1]$ such that
	\[
		\P(\bcA_{k,2}^c ) \lesssim
		\exp\left( -c  (\log k)^{\eta_2}   \right)
		\qquad\text{uniformly for all $k\in\N$.}
	\]
\end{lemma}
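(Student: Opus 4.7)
The plan is to apply Proposition \ref{pr:valleys} at time $t_{k-1}$ to a Markov-shifted and rescaled copy of $u$. First I would set $N = (\log t_k)^\beta$ and introduce the $\mathcal{F}_{t_{k-1}}(W)$-measurable event
$$E_k = \left\{\|u(t_{k-1})\|_{L^\infty(\T)} \le N \|u(t_{k-1})\|_{L^1(\T)}\right\}.$$
Because $\bcA_{k,1} \subseteq E_k$ (the defining supremum in $\bcA_{k,1}$ already covers the time $t_{k-1}$), Lemma \ref{lem:P(A1)} immediately yields
$$\P(E_k^c) \le \P(\bcA_{k,1}^c) \lesssim \exp\bigl(-c (\log k)^{\eta_1}\bigr).$$

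Next, on $E_k$ the normalized profile $v_k(t_{k-1}) = u(t_{k-1})/\|u(t_{k-1})\|_{L^1(\T)}$ is a continuous, strictly positive function with unit $L^1(\T)$-norm and $L^\infty(\T)$-norm at most $N$. By the Markov property of $\{u(t)\}_{t \ge 0}$ at the deterministic time $t_{k-1}$, conditional on $\mathcal{F}_{t_{k-1}}(W)$ the shifted process $\{v_k(t_{k-1}+s)\}_{s \ge 0}$ solves a rescaled SPDE of the form \eqref{SHE}, driven by the shifted white noise $\dot W \circ \vartheta_{t_{k-1}}$ and with noise coefficient $\hat\sigma(z) = \|u(t_{k-1})\|_{L^1(\T)}^{-1}\sigma\bigl(z \|u(t_{k-1})\|_{L^1(\T)}\bigr)$, which has the same Lipschitz constant as $\sigma$ (this mirrors the reduction carried out earlier in \eqref{V}). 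Consequently Proposition \ref{pr:valleys} applies to $v_k(t_{k-1}+\cdot)$ uniformly on $E_k$, irrespective of the realization of $u(t_{k-1})$.

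The final step is the numerology. By \eqref{alpha}, $\alpha > 4\beta/3 + 1$, so I can select and fix $\gamma \in (4/3, 2)$ with
$$\frac{4}{3} + \frac{1}{\beta} < \gamma < \frac{\alpha}{\beta}.$$
The left-hand inequality forces $\beta(3\gamma-4)/3 > 1$, while the right-hand one, combined with \eqref{t_k} (which gives $N \le (\log k)^\beta$), ensures $N^{-\gamma} \ge (\log k)^{-\gamma\beta} \ge (\log(k-1))^{-\alpha} = t_k - t_{k-1}$ for all sufficiently large $k$. Applying Proposition \ref{pr:valleys} conditionally on $\mathcal{F}_{t_{k-1}}(W)$ and integrating on $E_k$ then bounds $\P(\bcA_{k,2}^c \cap E_k)$ by a constant multiple of $\exp\bigl(-c N^{(3\gamma-4)/3}\bigr) \lesssim \exp\bigl(-c'(\log k)^{\beta(3\gamma-4)/3}\bigr)$. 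Combining with the bound on $\P(E_k^c)$ and setting $\eta_2 = \min\bigl(\eta_1, \beta(3\gamma-4)/3\bigr) \in (1, \eta_1]$ concludes the proof, modulo adjusting the implied constant to absorb the finitely many small values of $k$ for which $N < 2$ or the time-scale comparison fails.

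The main obstacle here is purely bookkeeping: the whole scheme hinges on the interplay between three exponents, $\alpha$ (governing the shrinking window lengths $t_k - t_{k-1}$), $\beta$ (the permissible peak-to-mean ratio dictated by $\bcA_{k,1}$), and $\gamma$ (from Proposition \ref{pr:valleys}, which converts the peak-to-mean ratio into the mesoscopic time scale $N^{-\gamma}$). Condition \eqref{alpha} is calibrated precisely so that a single choice of $\gamma$ simultaneously satisfies the time-scale matching $\gamma\beta < \alpha$ and the decay condition $\beta(3\gamma-4)/3 > 1$; any weakening of \eqref{alpha} would destroy the existence of such a $\gamma$ and hence break the argument.
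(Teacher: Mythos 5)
Your proposal is correct and follows essentially the same route as the paper: normalize $u$ at time $t_{k-1}$, invoke the Markov property to apply Proposition \ref{pr:valleys} conditionally with $N=(\log t_k)^\beta$, and choose $\gamma\in(\tfrac43,2)$ so that both $\gamma\beta<\alpha$ (time-scale matching) and $\beta(3\gamma-4)/3>1$ (summable decay) hold, which is exactly what \eqref{alpha} guarantees. The only cosmetic difference is that you condition on the $\F_{t_{k-1}}$-measurable event $E_k$ and use the union bound $\P(\bcA_{k,2}^c)\le\P(\bcA_{k,2}^c\cap E_k)+\P(E_k^c)$, whereas the paper conditions on $\bcA_{k-1,1}$ and invokes Lemma \ref{lem:cond:prob}; both are valid since $\bcA_{k,1}\subseteq E_k$.
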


Lemma \ref{lem:P(A2)} is a quantitative way to say that, 
with very high probability, the total-mass process
$\int_\T u(t\,,y)\,\d y = \|u(t)\|_{L^1(\T)}$ does not move much
for $t\in(t_{k-1}\,,t_k)$.

\begin{proof}
	We shall consider integers $k\ge2$. The $k=1$ case is easy to include [by adjusting constants
	only] and will not be discussed.
	
	Let us observe that $\|v_k(t_{k-1})\|_{L^1(\T)}=1$  almost surely.
	We emphasize that, by virtue of definition, 
	$\|v_k(t_{k-1})\|_{L^\infty(\T)}\le  (\log t_k)^\beta $ almost surely on $\bcA_{k-1,1}$. 
	
	According to \eqref{alpha}, we can choose  $\gamma\in(\frac43\,,2)$ that  satisfies
	\begin{equation}\label{cond:gamma}
		\gamma\leq \frac{\alpha}{\beta} \quad \text{and} \quad \frac{(3\gamma -4)\beta}{3}>1.
	\end{equation}
	We apply Proposition \ref{pr:valleys}, with $N= (\log t_k)^\beta $,
	together with the Markov property of the
	solution to \eqref{SHE} in order to find that 
	there exists $L=L(\gamma\,,\lip(\sigma))>0$ such that,
	a.s.\ on $\bcA_{k-1,1}$,
	 \begin{align*}
		&\P\left(\left. \|v_k(t)\|_{L^1(\T)} \not\in (1/2\,,2)
			\quad\text{for some }t\in\left[t_{k-1}\,,t_{k-1}+
			(\log t_k)^{-\gamma\beta}\right]\ \,\right|\, \F_{t_{k-1}}\right)\\
		&\hskip.5in=\P\left(\left. \|v_k(t)\|_{L^1(\T)} \not\in (1/2\,,2)
			\quad\text{for some }t\in\left[t_{k-1}\,,t_{k-1}+
			N^{-\gamma}\right]\ \,\right|\, \F_{t_{k-1}}\right)\\
		&\hskip1cm\le
			L\exp\left( -\frac{N^{(3\gamma-4)/3}}{L} \right)
			= L\exp\left( -\frac{ (\log t_k)^{(3\gamma-4)\beta/3}}{L} \right)
			\le L\exp\left( -\frac{(\log k)^{(3\gamma-4)\beta/3}}{%
			2^{(3\gamma-4)\beta/3}L} \right),
	\end{align*} 
	uniformly for all integers $k\ge2$; see \eqref{t_k} for the last
	inequality. Because of \eqref{t_k} and \eqref{cond:gamma},
	\[
	 t_{k-1} +(\log  t_k)^{-\gamma\beta} \ge t_{k-1} + 
		(\log k)^{-\gamma\beta} \geq  t_{k-1} + (\log k)^{-\alpha} = t_k ,
	\]
	for all  $k\ge2$. In other words,
	$[t_{k-1}\,,t_{k-1}+ N^{-\gamma}] \supset [t_{k-1}\,, t_k]$
	for all  $k\ge2$. 
	Therefore, a second appeal to
	\eqref{t_k} yields
	\[
		\P\left( \bcA_{k,2}^c\mid\F_{t_{k-1}}\right)\\
		\le L\exp\left( -\frac{(\log k)^{(3\gamma-4)\beta/3}}{%
			2^{(3\gamma-4)\beta/3}L} \right),
	\]
	a.s.\ on $\bcA_{k-1,1}$, for all but a finite number of integers $k\ge1$.
	In particular, we appeal to the fact that $\bcA_{k-1,1}$ is $\F_{t_{k-1}}$-measurable
	in order to deduce the following:
	\begin{equation}\label{P(A2)}
		\P\left( \bcA_{k,2}^c\mid\bcA_{k-1,1}\right)\\
		\le L\exp\left( -\frac{(\log k)^{(3\gamma-4)\beta/3}}{%
			2^{(3\gamma-4)\beta/3}L} \right)\qquad\text{for all $k\in\N$}.
	\end{equation}
	According to Lemma \ref{lem:cond:prob},
	$\P(\bcA_{k,2}^c)\le \P(\bcA_{k-1,1}^c) + \P( \bcA_{k,2}^c\mid\bcA_{k-1,1})$.
	We now choose $\eta_2=\eta_1\wedge \tfrac13(3\gamma -4)\beta$ in order
	to deduce  the result from the preceding
	and from Lemma \ref{lem:P(A1)}.
\end{proof}

Next, let us consider for every $q,L>0$ and $k\in\N$ the event
\begin{equation*}
	\bcA_{k,3} = \bcA_{k, 3}(q\,,L) = \left\{\omega\in\Omega:
	\sup_{\substack{x,y\in\T\\ x\neq y}} 
	\frac{ |v_k(t_k\,,x)-v_k(t_k\,,y)|}{|x-y|^q}
        (\omega) \le  L (\log k)^\alpha \right\}.
\end{equation*}
These are large-probability events, provided that $q$ and $L$ are chosen appropriately.
In fact, we have the following.

\begin{lemma}\label{lem:P(A3)}
	For every $q\in(0\,,1/2)$, there exist numbers
	$L=L(\alpha, \beta\,,q\,,\lip(\sigma))>0$,
	$c=c(\alpha\,, \beta\,,q\,,\lip(\sigma))>0$,  and $\eta_3=\eta_3(\alpha\,, \beta)\in (1\,,  \eta_2]$
	such that, uniformly for all  $k\in\N$,
	\[
		\P(\bcA_{k,3}^c) \lesssim
		\exp\left( - c\,   (\log k)^{\eta_3}\right),
	\]
	where $\eta_2>0$ is the number given in Lemma \ref{lem:P(A2)}.
\end{lemma}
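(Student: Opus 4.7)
The plan is to apply the Markov property at time $t_{k-1}$ to decompose $v_k(t_k,\cdot)$ into a deterministic heat-flow part and a stochastic-integral part, estimate each on the conditioning event $\bcA_{k-1,1}$, and then remove the conditioning using Lemma \ref{lem:cond:prob} together with Lemma \ref{lem:P(A1)}. Conditional on $\F_{t_{k-1}}$, the rescaled field $v_k$ solves \eqref{SHE} with coefficient $\hat\sigma_k(z):=\sigma(z\|u(t_{k-1})\|_{L^1(\T)})/\|u(t_{k-1})\|_{L^1(\T)}$---which has the same Lipschitz constant as $\sigma$---driven by the shifted noise $W\circ\vartheta_{t_{k-1}}$, and starting from $V_k:=u(t_{k-1})/\|u(t_{k-1})\|_{L^1(\T)}$, which satisfies $\|V_k\|_{L^1(\T)}=1$ a.s.\ and, on $\bcA_{k-1,1}$, $\|V_k\|_{L^\infty(\T)}\le(\log k)^\beta$. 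The mild formulation gives
\[
v_k(t_k,x) = (P_{t_k - t_{k-1}}V_k)(x) + \mathcal{I}_k(t_k,x),
\]
where $\mathcal{I}_k$ is the corresponding Walsh stochastic integral.

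The deterministic part is immediate: Lemma \ref{lem:delta:p} with $t=t_k-t_{k-1}=(\log k)^{-\alpha}$ and $\|V_k\|_{L^1(\T)}=1$ gives $\lip(P_{t_k-t_{k-1}}V_k)\lesssim(\log k)^\alpha$, and since $\mathrm{diam}(\T)\le 2$, this Lipschitz bound also dominates the $q$-Hölder seminorm of the deterministic part by the same order, $(\log k)^\alpha$.

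For the stochastic part, I would apply Lemma \ref{lem:tp1} to the SPDE satisfied by $v_k$ (with $\varpi:=\lip(\sigma)$, initial profile $V_k$, and spatial differences at the single time $t=t_k-t_{k-1}$): on $\bcA_{k-1,1}$,
\[
\E\bigl[|\mathcal{I}_k(t_k,x)-\mathcal{I}_k(t_k,y)|^p\bigm|\F_{t_{k-1}}\bigr]\le (cp)^{p/2}\exp\bigl(cp^3(\log k)^{-\alpha}\bigr)(\log k)^{p\beta}|x-y|^{p/2}
\]
for $p\ge 2$. Setting $p:=\lfloor(\log k)^{\eta_3}\rfloor$ with $\eta_3\in(1,\min(\eta_2,\alpha/3))$---a non-empty interval by \eqref{alpha}, since $\alpha>5$ forces $\alpha/3>1$---renders the exponential factor bounded, so that $\|\mathcal{I}_k(t_k,x)-\mathcal{I}_k(t_k,y)\|_p\lesssim(\log k)^{\eta_3/2+\beta}|x-y|^{1/2}$. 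A quantitative form of Kolmogorov's continuity theorem in the spatial variable (valid for $p(\tfrac12-q)>1$, automatic for large $k$) transfers this to a matching $p$-th moment bound on the $q$-Hölder seminorm of $\mathcal{I}_k(t_k,\cdot)$. Chebyshev's inequality then yields
\[
\P\bigl(\bcA_{k,3}^c\bigm|\F_{t_{k-1}},\bcA_{k-1,1}\bigr)\lesssim\bigl(L^{-1}(\log k)^{\eta_3/2+\beta-\alpha}\bigr)^p\le 2^{-p}\le\exp\bigl(-c(\log k)^{\eta_3}\bigr),
\]
once $L=L(\alpha,\beta,q,\varpi)$ is chosen large enough; note that $\eta_3/2+\beta<\alpha$ follows from $\eta_3<\alpha/3$ combined with $\alpha>4\beta/3+1>6\beta/5$.

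Finally, Lemma \ref{lem:cond:prob} applied to $E_1=\bcA_{k-1,1}^c$ and $E_2=\bcA_{k,3}^c$, combined with Lemma \ref{lem:P(A1)}, gives $\P(\bcA_{k,3}^c)\lesssim\exp(-c(\log k)^{\eta_3})$. The main obstacle is the joint bookkeeping of exponents: $\eta_3$ must satisfy $\eta_3>1$ (for downstream Borel--Cantelli-type applications), $3\eta_3<\alpha$ (so that Lemma \ref{lem:tp1} at moment order $p$ remains controllable), $\eta_3/2+\beta<\alpha$ (so Chebyshev actually produces a power of $1/2$), and $\eta_3\le\eta_2$ (so Lemma \ref{lem:P(A1)} does not dominate the rate). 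All four constraints are jointly feasible thanks precisely to the standing assumption $\alpha>4\beta/3+1$ in \eqref{alpha}.
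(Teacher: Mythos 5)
Your proposal is correct and follows essentially the same route as the paper: Markov property at $t_{k-1}$, the mild-form split into $P_{t_k-t_{k-1}}[v_k(t_{k-1})]$ handled by Lemma \ref{lem:delta:p} and a stochastic integral handled by Lemma \ref{lem:tp1} plus Kolmogorov continuity and Chebyshev at a $k$-dependent moment order, then deconditioning via Lemmas \ref{lem:cond:prob} and \ref{lem:P(A1)}. The only (immaterial) differences are your moment order $(\log k)^{\eta_3}$ with $\eta_3<\alpha/3$ in place of the paper's $\nu=(\log k)^{\alpha/2}$, and your omission of the intermediate conditioning on $\bcA_{k,2}$, which is indeed redundant here since $\|v_k(t_{k-1})\|_{L^1(\T)}=1$ holds identically.
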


\begin{proof}
	It suffices to prove the lemma for $k\in\N\cap[3\,,\infty)$.
	
	We may apply the Markov property at time $t_{k-1}$ to see that, started from time $t_{k-1}$,
	$v_k$ solves the \eqref{SHE} with a suitable rescaled version of $\sigma$, 
	and with initial profile $v_k(t_{k-1})$. More
	precisely, we may write $v_k$ in the following mild form:
	\begin{equation}\label{vkII:g}
		v_k(t)= P_{t-t_{k-1}} [v_k(t_{k-1})] + 
		\mathcal{I}_{v,k}(t)\qquad\text{for all $t>t_{k-1}$},
	\end{equation}
	where $p$ and $P$ denote respectively the heat kernel
	and semigroup [see \eqref{p} and \eqref{P}], and
	\begin{equation}\label{eq:noise}
		\mathcal{I}_{v,k}(t\,,x)
		= \int_{(t_{k-1},t)\times\T}p_{t-s}(x\,,z)
		\sigma_k \left( v_k(s\,,z) \right)W(\d s\,\d z),
	\end{equation}
	with
	\begin{equation}\label{sigma_k}
		\sigma_k (w) =  \frac{\sigma\left( \|u(t_{k-1})\|_{L^1(\T)} w  \right)}{%
		\|u(t_{k-1})\|_{L^1(\T)}}\qquad
		\text{for all $w\in\R$}.
	\end{equation}
	In order to simplify the typography, we will write
	\[
		E_k = \E(\,\cdots\mid\F_{t_{k-1}})\qquad[k\in\N].
	\]
	The random function $\sigma_k$ is $\F_{t_{k-1}}$-measurable,
	and has the same optimal Lipschitz constant as $\sigma$. Because 
	$t_k-t_{k-1}=(\log k)^{-\alpha}\le 1$ for all $k\in\N\cap[3\,,\infty)$, we
	may apply Lemma \ref{lem:tp1} conditionally using the Markov property 
	in order to see that there exists $c=c(\lip(\sigma_k))=c(\lip(\sigma))>0$ such that, 
	uniformly for every $k\in\N\cap[3\,,\infty)$, $\nu\in[2\,,\infty)$,
	$s,t\in(t_{k-1}\,,t_k]$, and $x,y\in\T$,
	\begin{align*}
		E_k\left( \left| \mathcal{I}_{v,k}(t\,,x) - \mathcal{I}_{v,k}(s\,,y) \right|^\nu\right)
		\le (c\nu)^{\nu/2} \|v_k(t_{k-1})\|_{L^\infty(\T)}^\nu
		\exp\left(2c\nu^3  (\log k)^{-\alpha}  \right)\left( \sqrt{|t-s|} + |x-y| \right)^{\nu/2},
	\end{align*}
	almost surely. Therefore, a suitable formulation of the Kolmogorov continuity theorem
	\cite[Appendix C]{cbms} yields, for every $q\in(0\,,1/2)$
	a number $c_1=c_1(q\,,\lip(\sigma))>0$ such that, 
	uniformly for every $k\in\N\cap[3\,,\infty)$, $\nu\in[2\,,\infty)$,
	$s,y\in(t_{k-1}\,,t_k]$, and $x,y\in\T$,
	\begin{align*}
		E_k\left( \sup_{\substack{x,y\in\T\\ x\neq y}}
		\frac{\left| \mathcal{I}_{v,k}(t_k\,,x) - \mathcal{I}_{v,k}(t_k\,,y) \right|^\nu}{%
		|x - y|^{q\nu}}\right)
		\le (c_1\nu)^{\nu/2} \|v_k(t_{k-1})\|_{L^\infty(\T)}^\nu
		\exp\left(2c\nu^3   (\log k)^{-\alpha} \right),
	\end{align*}
	almost surely. Since $\lip(P_tf)\lesssim t^{-1}\|f\|_{L^1(\T)}$ for all $t\in(0\,,2]$
	and $f\in L^1(\T)$ [see Lemma \ref{lem:delta:p}], we can deduce from
	\eqref{vkII:g} that for every $q\in(0\,,1/2)$ there exists
	a number $c_2=c_2(q\,,\lip(\sigma))>0$ such that, 
	uniformly for every $k\in\N\cap[3\,,\infty)$, $\nu\in[2\,,\infty)$,
	$s, t\in(t_{k-1}\,,t_k]$, and $x,y\in\T$,
	\begin{align*}
		&E_k\left( \sup_{\substack{x,y\in\T\\ x\neq y}}
			\frac{\left| v_k(t_k\,,x) - v_k(t_k\,,y) \right|^\nu}{%
			|x - y|^{q\nu}}\right)  \\
		&\hskip1in\le (c_2\nu)^{\nu/2} \|v_k(t_{k-1})\|_{L^\infty(\T)}^\nu
			\exp\left(2c\nu^3   (\log k)^{-\alpha} \right) + \frac{c_2^\nu
			\|v_k(t_{k-1})\|_{L^1(\T)}^\nu}{(t_k-t_{k-1})^\nu}\\
		&\hskip1in\le c_2^\nu\left[
			\nu^{\nu/2}\|v_k(t_{k-1})\|_{L^\infty(\T)}^\nu
			\exp\left(2c\nu^3   (\log k)^{-\alpha}  \right) +  (\log k)^{\alpha\nu}  \|v_k(t_{k-1})\|_{L^1(\T)}^\nu\right],
	\end{align*}
	almost surely.  We have already observed that
	$\| v_k(t_{k-1})\|_{L^1(\T)}=1$ [see also \eqref{v_k}] and
\begin{equation*}
   \|v_k(t_{k-1})\|_{L^\infty(\T)}\le  (\log t_k)^\beta \leq (\log k)^\beta
\end{equation*}
	a.s.\ on $\bcA_{k-1,1}$ [see \eqref{t_k} for the last inequality]. Therefore, we get that 
	\begin{align*}
		\E\left(\left. \sup_{\substack{x,y\in\T\\ x\neq y}}
		\frac{\left| v_k(t_k\,,x) - v_k(t_k\,,y) \right|^\nu}{%
		|x - y|^{q\nu}}\ \,\right|\, \F_{t_{k-1}}\right)
		\le c_2^\nu \left[\sqrt\nu\, (\log k)^\beta 
		\exp\left(2c\nu^2 (\log k)^{-\alpha}\right) +  (\log k)^{\alpha} \right]^\nu,
	\end{align*} 
	a.s.\  on $\bcA_{k-1,1}\cap\bcA_{k,2}$.  Define
	\begin{equation*}
		\nu:=(\log k)^{\alpha/2} 
	\end{equation*} 
	and apply the preceding with this $\nu$ to see that 
	\begin{align*}
		\left[\E\left(\left. \sup_{\substack{x,y\in\T\\ x\neq y}}
			\frac{\left| v_k(t_k\,,x) - v_k(t_k\,,y) \right|^\nu}{%
			|x - y|^{q\nu}}\ \,\right|\, \F_{t_{k-1}}\right)\right]^{1/\nu}
			& \le c_4\left[   (\log k)^{\beta+\alpha/4}+ (\log k)^\alpha   \right]\\
		&\le  2c_4  (\log k)^\alpha \hskip1in[\text{see \eqref{alpha}}]  ;
	\end{align*}
	a.s.\  on $\bcA_{k-1,1}\cap\bcA_{k,2}$, where $c_4=c_4(\alpha\,,q\,,\lip(\sigma))>0$ 
	is a non-random number that does not depend on $k\in\N$. 
	Therefore, an application of Chebyshev's inequality yields
	\[
		\P\left( \left. \sup_{\substack{x,y\in\T\\ x\neq y}}
		\frac{\left| v_k(t_k\,,x) - v_k(t_k\,,y) \right|}{%
		|x - y|^q} \ge 2\e c_4 (\log k)^\alpha \ \,\right|\, \F_{t_{k-1}}\right)
		\le \e^{-\nu},
	\]
	a.s.\  on $\bcA_{k-1,1}\cap\bcA_{k,2}$. Because $\bcA_{k-1,1}$ is $\F_{t_{k-1}}$-measurable,
	we deduce from the above and \eqref{alpha} that
	\begin{equation}\label{P(A3)}
		\P\left( \bcA_{k,3}^c\mid\bcA_{k-1,1}\cap\bcA_{k,2}\right) \le
		 \exp\left(-(\log k)^{\alpha/2}\right)   \qquad\text{for all $k\in\N$}.
	\end{equation}
	Lemma \ref{lem:cond:prob} ensures that
	\[
		\P(\bcA_{k,3}^c)\le \P(\bcA_{k-1,1}^c) +
		\P(\bcA_{k,2}^c \mid \bcA_{k-1,1}) +
		\P(\bcA_{k,3}^c\mid\bcA_{k-1,1}\cap\bcA_{k,2}).
	\]
	Combine this with Lemma \ref{lem:P(A1)}, \eqref{P(A2)}, and \eqref{P(A3)}, and let 
	\begin{equation}\label{eta3}
		\eta_3:=\min\left\{\eta_1\,, \eta_2\,,\frac{\alpha}{2}\right\}> 1
	\end{equation}  in order to conclude the proof.
\end{proof}

\subsection{Conclusion of the proof of Theorem \ref{th:inf}}%\label{subsec:Pf:inf}
Armed with the technical results of the previous two subsections, and with Theorem 
\ref{th:S/M}, equivalently \eqref{eq:Lpq}, 
we now work toward completing the proof of Theorem  \ref{th:inf}. Because
we have already proved \eqref{eq:Lpq} which states
that the $L^1(\T)$- and $L^\infty(\T)$-norms are
close up to logarithmic errors, it remains to prove that with probability one
the infimum does
not stray away from the $L^1(\T)$-norm by more than a logarithmic term
as time tends to infinity. Therefore, we can see that
the following clearly is a big step in the right direction.
The notation is the same as that of the earlier portions of this section.
In particular, $t_1<t_2<\cdots$ is the sequence that was defined in
\eqref{def:t_k}; see also \eqref{t_k}.

\begin{lemma} \label{lem:v_k:LB}
	Choose and fix a real number $\kappa >2\alpha$. 	
	Then, there exists a non-random sequence $0<s_1<s_2<\cdots$ such that
	$s_k+t_k\le t_{k+1}$ for all $k\in\N$ sufficiently large, and with probability one,
	\[
		\inf_{x\in\T} u(s_k+t_k\,,x)  > \exp\left( -7 (\log t_k)^{\kappa} \right)   \|u(t_{k-1})\|_{L^1(\T)}
		 \qquad\text{for all but a finite number of
		 $k\in\N$}.
	\] 
\end{lemma}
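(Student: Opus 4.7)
Let $G_k:=\bcA_{k,1}\cap\bcA_{k,2}\cap\bcA_{k,3}$; by Lemmas \ref{lem:P(A1)}--\ref{lem:P(A3)}, the probabilities $\P(G_k^c)$ are summable in $k$ (since $\eta_3>1$), so Borel--Cantelli tells us that $G_k$ occurs for all but finitely many $k$ almost surely. I would work conditionally on $G_k$, on which the three ingredients at time $t_k$ read: $\|v_k(t_k)\|_{L^1(\T)}\ge 1/2$, $\|v_k(t_k)\|_{L^\infty(\T)}\le 2(\log t_k)^\beta$, and spatial H\"older-$q$ seminorm at most $L(\log k)^\alpha$ for a fixed $q\in(0\,,1/2)$ taken close to $1/2$. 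Since $|\T|=2$, averaging produces a point $x_0\in\T$ with $v_k(t_k,x_0)\ge 1/4$, and the H\"older estimate then yields a seed ball $B(x_0,r_k)$ on which $v_k(t_k)\ge 1/8$, with radius
\[
r_k\asymp(\log k)^{-\alpha/q}.
\]

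The plan is to propagate this seed bound from $B(x_0,r_k)$ to all of $\T$ over the short interval $[t_k\,,t_k+s_k]$, where I would choose $s_k:=r_k$, so that $s_k+t_k\le t_{k+1}$ for large $k$. By the strong Markov property at $t_k$, the process $v_k(t_k+\cdot)$ satisfies the SPDE \eqref{V} with noise coefficient $\sigma_k$ of the same Lipschitz constant as $\sigma$ and initial profile $v_k(t_k)$; its mild form reads $v_k(t_k+s,x)=P_sv_k(t_k)(x)+\tilde{\mathcal I}_k(s,x)$. I would partition $[t_k\,,t_k+s_k]$ into $M=\lceil1/r_k\rceil$ sub-intervals of length $\tau=r_k^2$ and apply the Markov property at each sub-endpoint. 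At step $j$, Lemma \ref{lem:p:LB} expands the ball on which $v_k\ge h_j$ by $\sqrt\tau=r_k$, at the cost of multiplying $h_j$ by the constant $A\in(0\,,1)$. After $M$ steps the ball covers $\T$, with deterministic multiplicative loss $A^M=\exp(-c(\log k)^{\alpha/q})$. With $q$ close to $1/2$ this becomes $\exp(-c(\log k)^{2\alpha+o(1)})$, which, since $\kappa>2\alpha$, exceeds $\exp(-7(\log t_k)^\kappa)$ for large $k$ by a wide margin.

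The main obstacle is controlling the cumulative noise contributions $\tilde{\mathcal I}_{k,j}$ along this chain. Each individual $\tilde{\mathcal I}_{k,j}$ has only polylogarithmic-in-$k$ moments, which is vastly larger than the rapidly decaying deterministic lower bound $A^jh_0$, so the crude inequality $v_k(t_k+(j+1)\tau)\ge P_\tau v_k(t_k+j\tau)-\sup_x|\tilde{\mathcal I}_{k,j}|$ becomes vacuous after a handful of steps. Overcoming this requires combining three ingredients: (i) iterated Burkholder--Davis--Gundy moment bounds for $\tilde{\mathcal I}_{k,j}$ with moment order $\nu\sim1/\sqrt\tau=1/r_k$, fed by the current $L^\infty$ bound on $v_k$ at the start of each sub-interval (propagated by Lemma \ref{lem:pr2}); (ii) a Chebyshev plus Kolmogorov-continuity argument, together with a union bound over the $M=(\log k)^{\alpha/q+o(1)}$ sub-steps, producing a single good event of probability at least $1-\exp(-(\log k)^{c_0})$ on which every per-step $\sup_x|\tilde{\mathcal I}_{k,j}|$ is absorbable into the loss budget at that step; and (iii) Mueller's support/positivity argument \cite{Mueller1}, invoked to ensure that strict positivity of $v_k$ is inherited at each sub-endpoint, so a potentially negative-signed $\tilde{\mathcal I}_{k,j}$ cannot cause the inductive lower bound to collapse through zero. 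The prefactor $7$ in $\exp(-7(\log t_k)^\kappa)$ provides exactly the slack needed to absorb the accumulated polylogarithmic constants coming from the heat-kernel bound, the noise control, and the count of sub-steps. A final Borel--Cantelli argument, applied to the complement of this simultaneous good event together with $G_k^c$, upgrades the conditional estimate to the almost-sure statement of the lemma.
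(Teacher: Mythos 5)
Your architecture is the same as the paper's: use $\bcA_{k,1},\bcA_{k,2},\bcA_{k,3}$ to produce a point $x_k$ with $v_k(t_k,x_k)\ge\frac14$ and hence a seed ball on which $v_k(t_k)\ge\frac18$, then spread this bound over all of $\T$ by iterating Lemma \ref{lem:p:LB} over $O\!\left((\log k)^{\text{power}}\right)$ sub-steps of length comparable to the squared ball radius, losing a constant factor $A$ per step, and finish with Borel--Cantelli. Your parameter choices ($r_k\asymp(\log k)^{-\alpha/q}$, $s_k=r_k$, $M\asymp 1/r_k$) differ from the paper's ($r_k=(\log t_k)^{-\kappa}$ with $\kappa>\alpha/q$, per-step time $\frac18(\log t_k)^{-2\kappa}$, $J=3(\log t_k)^{\kappa}$ steps), but both bookkeepings are compatible with the target $\exp(-7(\log t_k)^{\kappa})$, and your $s_k\le(\log k)^{-\alpha}$ check is fine.

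The gap is exactly at the point you flag and do not close. As you observe, the per-step noise $\sup_x|\tilde{\mathcal I}_{k,j}|$ admits only an \emph{absolute} bound of polylogarithmic size (it is proportional to $\|v_k(t_k+j\tau)\|_{L^\infty(\T)}\lesssim(\log k)^\beta$ times a small power of $\tau$), while the signal you must protect at step $j$ is $A^{j}h_0$, which drops below any fixed polylog threshold after $O(\log\log k)$ steps out of $M\asymp(\log k)^{\alpha/q}$. Your ingredients (i) and (ii) reproduce precisely this absolute bound and a union bound over $j$, so they prove nothing beyond the first few steps; and ingredient (iii) is a red herring, since Mueller's positivity is qualitative and supplies no quantitative floor. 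The missing idea is a \emph{multiplicative renormalization at each sub-step}: at time $t_k+j\tau$, use the Markov property and the comparison theorem (Shiga \cite{Shiga}) to minorize $v_k$ by the solution started from $h_j\,\1_{B(x_k,jr_k)}$, where $h_j=A^{j}h_0$ is the current level; dividing by $h_j$ and using $\sigma(0)=0$ yields a solution of \eqref{SHE} with a diffusion coefficient of the \emph{same} Lipschitz constant and initial data of unit $L^\infty(\T)$-norm, so Lemma \ref{lem:tp1} now bounds its one-step noise by $(\log k)^{-\theta}$ for some $\theta>0$ --- that is, the noise at step $j$ is $h_j\cdot(\log k)^{-\theta}$ in absolute terms and can be absorbed into the gap $h_j(A-A^2)$ uniformly in $j$. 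This is what the paper's inductive events $\bm{\mathcal{B}}_j$ together with the shifted events $(\bcA_{k,1}\cap\bcA_{k,2}\cap\bcA_{k,3})\circ\vartheta_{j t}$ implement. A small additional remark: the factor $7$ in $\exp(-7(\log t_k)^{\kappa})$ is not slack for ``accumulated polylogarithmic constants''; it arises as $2J\chi\le 6\chi(\log t_k)^{\kappa}\le 7(\log t_k)^{\kappa}$ from the per-step heat-kernel loss $\chi=\log 8-\log A$ and the step count, so you must track that constant explicitly rather than treat it as a buffer.
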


\begin{proof}
	As before, let us identify $\T$ with the  interval $[-1\,,1]$ and 
	with that in mind define
	\[	
		x_k = \inf\left\{ x\in\T:\, v_k(t_k\,,x) \ge \tfrac14\right\},
	\]
	with $\inf\varnothing=2$ (any point that is not in $[-1\,,1]$ will do). Then, $x_k$ is
	an $\F_{t_k}$-measurable random variable that takes its values in $[-1\,,1]\cup\{2\}$.
	Because $\|v_k(t_k)\|_{L^1(\T)}\ge\frac12$ a.s.\ on $\bcA_{k,2}$
	[see \eqref{A:k,2}], we can conclude that
	\begin{equation}\label{x_k}
		x_k\in\T\quad\text{a.s.\ on $\bcA_{k,2}$}.
	\end{equation}
	Choose and fix  $q\in(0\,,1/2)$ and $L>0$ according to the statement
	of Lemma \ref{lem:P(A3)} and let $\kappa$ be any number such that 
	\begin{equation}\label{eq:lambda}
		\kappa > \frac{\alpha}{q}.
	\end{equation}
	Since we can choose $\alpha$ to be any number in $(4\,,\infty)$ -- see \eqref{alpha} --
	and  because $q\in(0\,,1/2)$ is otherwise arbitrary, 
	we can choose $\kappa$ to be any number that satisfies
	\begin{equation} \label{eq:kappa-gt-8}
		\kappa>10.
	\end{equation}
	We also  define
	\[
        	r_k = (\log t_k)^{-\kappa}\qquad[k\in\N].
	\]
	Then, recall \eqref{ball} and observe that for every $k\in\N$,
	\begin{align*}
		\inf_{y\in B(x_k,r_k)} v_k(t_k\,,x_k) &\ge 
			v_k(t_k\,,x_k) - \sup_{y\in B(x_k,r_k)}\left| v_k(t_k\,,y) - v_k(t_k\,,x_k)\right|\\
		&\ge \frac14 - L r_k^q (\log k)^\alpha   \qquad\text{a.s.\ on 
			$\bcA_{k,2}\cap\bcA_{k,3}$}\\
		&= \frac14 - 2^{\kappa}L (\log k)^{-(\kappa q -\alpha)};
	\end{align*}
	see \eqref{t_k}. We set 
	\[
	 	k_\alpha = \left\lceil \exp\left( (2^{\kappa+3}L)^{1/(\kappa q-\alpha)} \right)  \right\rceil,
	\] 
	where $\lceil y\rceil$ denotes the smallest integer to the right of $y\ge0$, as is customary.
	In particular, we may observe that $k_\alpha\in\N$ is deterministic and
	\begin{equation}\label{interval}
		\inf_{y\in B(x_k,r_k)} v_k(t_k\,,x_k) \ge \frac{1}{8}
		\qquad\text{a.s.\ on 
		$\bcA_{k,2}\cap\bcA_{k,3}$ for all integers $k\ge k_\alpha$.}
	\end{equation}
	As in \eqref{vkII:g},
	\begin{equation}\label{vkII:gg}
		v_k(t + t_k ) = P_t [v_k(t_k)] + 
		\mathcal{I}_{v,k+1}(t + t_k)\qquad\text{for all $t>0$},
	\end{equation}
	where $p$ and $P$ denote respectively the heat kernel
	and semigroup [see \eqref{p} and \eqref{P}], and $\mathcal{I}_{v,k+1}$ and $\sigma_{k+1}$
	were respectively defined in \eqref{eq:noise} and \eqref{sigma_k}. We study the two terms 
	on the right-hand side of \eqref{vkII:gg} separately.
	
	Thanks to \eqref{interval}, the deterministic quantity on the right-hand side of
	\eqref{vkII:gg} satisfies the following:
	\[
		P_t[v_k(t_k)] \ge \frac{1}{8}\int_{B(x_k,r_k)} p_t(\cdot\,,y)\,\d y
		\qquad\text{a.s.\ on $\bcA_{k,2}\cap\bcA_{k,3}$}.
	\]
	Therefore, Lemma \ref{lem:p:LB} [with $c=1$] ensures that, for all $k\in\N$,
	\begin{equation}\label{pre:t:t_k}
		P_t[v_k(t_k)] \ge \frac{A}{8}\,
		\1_{B(x_k,2\sqrt{t})} = \e^{-\chi}\, \1_{B(x_k,2\sqrt{t})}
		\qquad\text{a.s.\ on $\bcA_{k,2}\cap\bcA_{k,3}$},
	\end{equation}
	provided additionally that $0 < 4 t< r_k^2 =(\log t_k)^{-2\kappa}.$
	The following choice will therefore do:
	\begin{equation}\label{t:t_k}
		t = \tfrac18 (\log t_k)^{-2\kappa }.
	\end{equation}
	Henceforth, the symbol ``$t$'' is reserved for the particular choice in \eqref{t:t_k}.
	To be sure, we mention also that \eqref{pre:t:t_k} defines the number
	$\chi$ independently of all other parameters; in fact, in light of \eqref{def:A},
	we have
	\[
		\chi = \log 8 - \log A=\log 8 - \P\left\{
		\frac{1}{\sqrt{2}} \leq Z \leq \frac{3}{\sqrt{2}} \right\} \geq \log 8 -1  \approx 1.08.
	\]
	
	Let us turn to the second quantity on the right-hand side of \eqref{vkII:gg}.
	Lemma \ref{lem:tp1} and an application of the Markov property at time $t_k$
	together yield the following: There exists  constant $c = c(\lip(\sigma))>0$
	such that simultaneously for all $\nu\in[2\,,\infty)$, $k\in\N$, $x,y\in\T$,
	and $s_1,s_2\in [0\,,t]$,
	\begin{align*}
		&\E\left(\left. | \mathcal{I}_{v,k+1}(s_1+t_k\,,x) - \mathcal{I}_{v,k+1}(s_2+t_k\,,y)
			|^\nu\ \,\ \right|\, \F_{t_k}\right)^{1/\nu}\\
		&\lesssim \sqrt{\nu}\, \exp(c\nu^2 t ) \|v_k(t_k)\|_{L^\infty(\T)}
			\left[ \sqrt{|s_2-s_1|}+|x-y|\right]^{1/2}\\
		&\le\sqrt\nu\, \exp(c\nu^2 t)  (\log k)^\beta  \left[ \sqrt{|s_2-s_1|}+|x-y|\right]^{1/2}
			\qquad\text{a.s.\ on $\bcA_{k,1}$}.
	\end{align*}
	Moreover, the implied constant is deterministic, as can be seen from inspecting the details
	of the arguments that leads to these bounds.
	Once again, we appeal to this and a suitable form of the Kolmogorov continuity theorem
	in order to conclude that
	\begin{align*}
		&\E\left(\left. \sup_{\substack{s_1,s_2\in(0,t)\\ s_1\neq s_2}}
			\sup_{x\in\T} \frac{| \mathcal{I}_{v,k+1}(s_1+t_k\,,x) 
			- \mathcal{I}_{v,k+1}(s_2+t_k\,,x)
			|^\nu}{|s_2-s_1|^{q\nu/2}}\ \,\ \right|\, \F_{t_k}\right)^{1/\nu}\\
		&\lesssim \sqrt{\nu}\exp\left( c\nu^2t\right)  (\log k)^\beta  
			\qquad\text{a.s.\ on $\bcA_{k,1}$},
	\end{align*}
	where the implied constant is independent of $\nu\in[2\,,\infty)$ and $k\in\N$, as well as
	deterministic. In particular, this yields
	\begin{align*}
		&\hspace{-1cm}\E\left(\left. \adjustlimits\sup_{s\in(0,t)}
			\sup_{x\in\T}| \mathcal{I}_{v,k+1}(s+t_k\,,x) |^\nu
			\ \,\ \right|\, \F_{t_k}\right)^{1/\nu}\\
		&\lesssim  \sqrt{\nu}\,\exp(c\nu^2 t) t^{q/2}  (\log k)^\beta 
			\qquad\text{a.s.\ on $\bcA_{k,1}$}\\
		&\lesssim\sqrt{\nu}\,\exp(c\nu^2 t) (\log k)^{-\kappa q +\beta} \\
		&\leq \sqrt{\nu}\,\exp\left(\frac{2^{2\kappa} c\nu^2}{8 (\log k)^{2\kappa} }
			\right) (\log k)^{-\kappa q +\beta}
			&\text{[see \eqref{t_k} and \eqref{t:t_k}]},
	\end{align*}
	where the implied constants are deterministic and independent of $\nu\in[2\,,\infty)$
	and $k\in\N$. We now choose $\nu$ slightly more carefully. Namely, we apply the preceding
	with the following particular choice:
	\[
		\nu =(\log k)^{\alpha/2}.
	\]
	Since we have chosen $q \in (0, 1/2)$,  \eqref{alpha} and \eqref{eq:lambda} imply that 
	\[
		\theta_1: = 2\kappa - \alpha>\alpha\left(\frac{2}{q} -1\right) >0 
		\qquad \text{and} \qquad \theta_2:= \kappa q -\beta - \frac{\alpha}{4}
		>\frac{3\alpha}{4}  - \beta  >0.
	\] 
	Therefore,
	\[
		\sqrt{\nu}\,\exp\left(\frac{2^{2\kappa} c\nu^2}{8 (\log k)^{2\kappa} }
		\right) (\log k)^{-\kappa q +\beta} = 
		\exp\left(\frac{2^{2\kappa}c}{8 (\log k)^{\theta_1}}\right) (\log k)^{-\theta_2}.
	\]
	Thus, we find that, for 
	the particular choice of $\nu=\nu(k)$, as given above, we can find a non-random
	number $C=C(\alpha\,,\beta\,,\lip(\sigma))>0$ such that
	\[
		\E\left(\left. \adjustlimits\sup_{s\in(0,t)}
		\sup_{x\in\T}| \mathcal{I}_{v,k+1}(s+t_k\,,x) |^\nu
		\ \,\ \right|\, \F_{t_k}\right)^{1/\nu}\\
		\le C (\log k)^{-\theta_2}
		\qquad\text{a.s.\ on $\bcA_{k,1}$},
	\]
	 simultaneously for all $k\in\N$ large enough to ensure that $\nu=\nu(k)\ge2$.
	 The magnitude of the minimum $k$ that does this is deterministic
	 and depends only on $\alpha$; see  \eqref{interval}. This bound
	 and Chebyshev's inequality together yield that as long as $k$ is large enough
	 to ensure that $\nu\ge2$,
	\[
		\P\left(\left. \sup_{x\in\T}| \mathcal{I}_{v,k+1}(t+t_k\,,x) | > 
		\frac{\e C}{(\log k)^{\theta_2}}\ \,\right|\, \F_{t_k}\right)
		\le \e^{-\nu}\lesssim\exp\left( - (\log k)^{\alpha/2} \right)
		\qquad\text{a.s.\ on $\bcA_{k,1}$},
	\]
	where the implied constant is non random and independent of $k$. By choosing
	a slightly large implied constant, we can see that the preceding holds uniformly for
	all $k\in\N$, in fact. Recall $\chi$ from \eqref{pre:t:t_k}, and recall also that
	$\chi \ge 1.08$. We apply the
	preceding probability bound together with the triangle inequality in order to deduce
	from  \eqref{vkII:gg} and \eqref{pre:t:t_k} that
	\begin{align*}
		&\P\left(\left.  \inf_{x\in B(x_k,2\sqrt t)}| v_k(t+t_k\,,x) | \le  
			\exp(-2\chi)
			\ \,\right|\, \F_{t_k}\right) \\
		&\le \P\left(\left.  \inf_{x\in B(x_k,2\sqrt t)}| v_k(t+t_k\,,x) | \le  
			\e^{-\chi} - \frac{\e C}{(\log k)^{\theta_2}}
			\ \,\right|\, \F_{t_k}\right) \\
		&\le \P\left(\left. \sup_{x\in\T}| \mathcal{I}_{v,k+1}(t+t_k\,,x) | > 
			\frac{\e C}{(\log k)^{\theta_2}}\ \,\right|\, \F_{t_k}\right)	
			\lesssim\exp\left( - (\log k)^{\alpha/2}  \right)
			\quad\text{a.s.\ on $\bcA_{k,1}\cap\bcA_{k,2}\cap\bcA_{k,3}$},
	\end{align*}
	valid for all $k\in\N$ large enough to ensure that
	$\exp(-\chi)-\e C/ (\log k)^{\theta_2} \ge \exp(-2\chi)$, and 
	where the implied constant is non random and independent of $k$. 
	We can increase the implied constant if needed so that it still only depends on
	$(\alpha\,,\lip(\sigma))$ and yet the above inequality holds for all $k\in\N$.
	
	Define 
	\begin{align*}
	\bm{\mathcal{B}}_0 &= \bm{\mathcal{B}}_{0,k} = 
		\left\{ \omega\in\Omega:\,  | v_k(t_k\,,x_k)| (\omega) \ge  \frac{1}{4} \right\} \quad \text{and} \\
		\bm{\mathcal{B}}_j &= \bm{\mathcal{B}}_{j,k} = 
		\left\{ \omega\in\Omega:\,  
		\inf_{x\in B(x_k,j\sqrt t)}| v_k(jt+t_k\,,x) |(\omega) \ge  
		\e^{-2j\chi}\right\}\qquad[j\geq 1].
	\end{align*}
	We have shown that:
	\begin{compactenum}
	\item $\bm{\mathcal{B}}_0\supset\bcA_{k,2}$ a.s. [see \eqref{x_k}]; and
	\item $\P(\bm{\mathcal{B}}^c_1\mid\F_{t_k})\lesssim\exp(-(\log k)^{\alpha/2})$
		a.s.\ on $\bcA_{k,1}\cap\bcA_{k,2}\cap\bcA_{k,3}\cap\bm{\mathcal{B}}_0$ for all $k$.
	\end{compactenum}
	Now we proceed inductively, and repeat the above procedure, in order to see that
	uniformly for every $j\in\Z_+$ and $k\in\N$,
	\[
		\P\left( \bm{\mathcal{B}}_{j+1}^c \mid \F_{t_k+ jt}\right)
		\lesssim\exp\left( -(\log k)^{\alpha/2}\right)\quad\text{a.s.\
		on $\left( \bcA_{k,1}\cap\bcA_{k,2}\cap\bcA_{k,3}\right)\circ\vartheta_{jt}
		\cap\bm{\mathcal{B}}_j$},
	\]
	where $\vartheta$ is the same shift  functional on paths that was defined in \eqref{vartheta}.
	And we can repeat the proof of Lemmas \ref{lem:P(A1)},
	\ref{lem:P(A2)}, and  \ref{lem:P(A3)} in order
	to see that
	\[
		\adjustlimits\max_{i\in\{1,2,3\}}\sup_{j\in\Z_+}
		\P\left( \bcA_{k,i}^c\circ\vartheta_{jt}\right) \lesssim\exp\left( - \log k)^{\eta_3}\right)
		\qquad\text{for all large $k\in\N$},
	\]
	where $\eta_3=\eta_3(\alpha\,, \beta)>1$ is the number  given in Lemma \ref{lem:P(A3)}.
	It might help to also recall  that $\eta_3 \leq \eta_1\wedge \eta_2\wedge (\alpha/2)$;
	see \eqref{eta3}. 
	We now let $\eta:=\eta(\alpha\,, \beta):=\eta_3>1$ and then make repeated appeals to Lemmas
	\ref{lem:cond:prob}, \ref{lem:P(A1)},
	\ref{lem:P(A2)}, and \ref{lem:P(A3)} in order to see
	that for all large  $k\in\N$ and $J = J(k)\in\N$,
	\[
		\P( \bm{\mathcal{B}}^c_J )\lesssim J\exp\left( - (\log k)^{\eta} \right),
	\]
	where the implied constant depends neither on $J$ nor $k$.
	Thanks to \eqref{t:t_k},
	\[
	   J\sqrt t \ge 1 \quad\Leftrightarrow\quad J\ge \sqrt{8}\, (\log t_k)^{\kappa},
	\]
	uniformly for all $k\in\N$. Therefore, we may set $J=3(\log t_k)^{\kappa}$ to ensure that
	$J\sqrt {t}\ge1$, whence
	\[
		B\left(x_k\,, J\sqrt t\right) = \T,
	\]
	For this particular choice of $J = J(k)$, and
	regardless of the value of $x_k$ [which is in $\T$ a.s.\ on $\bcA_{k,1}$], \eqref{t_k} implies that 

	\[
		\P\left\{\inf_{x\in\T} v_k(Jt+t_k\,,x) \le
		\exp(-2J\chi ) \right\} = \P(\bm{\mathcal{B}}_J) 
		\lesssim (\log k)^{\kappa} \exp\left( -(\log k)^{\eta}\right),
	\]
	uniformly for all large $k\in\N$. We used \eqref{t_k} in the last inequality above. Because of the definition of $J$, and thanks to \eqref{t_k} again and \eqref{eq:lambda},
	\[
		Jt = \frac{3}{8}(\log t_k)^{-\kappa} \leq \frac{  3}{8} \, 2^{3\kappa}\, (\log k)^{-\kappa} < (\log k)^{-\alpha}
	\]
 for all  $k$ sufficiently large. In other words, we can see that
	$Jt + t_k < t_{k+1}$ for all $k$ sufficiently large [how large is deterministic
	and depends only on $\alpha$ and $q$]. In addition, since  $\chi \geq 1.08$, we can  have that 
	\[
		\exp(-2J\chi) \geq \exp\left( -6\chi (\log t_k)^{\kappa} \right) \ge \exp\left( -7 (\log t_k)^{\kappa} \right). 
	\]
	Set $s_k = Jt$ and collect things in order to see that 
	\[
		s_k+t_k\le t_{k+1}\quad\text{and}\quad
		\P\left\{ \inf_{x\in\T} v_k(s_k+t_k\,,x) \le \exp\left( -7 (\log t_k)^{\kappa} \right)  \right\}
		\lesssim (\log k)^{\kappa} \exp\left( -(\log  k)^{\eta}\right),
	\]
	uniformly for all sufficiently large $k\in\N$. 
	We now apply the Borel--Cantelli lemma and see that $\kappa > \frac{\alpha}{q}$ for every $q \in (0, 1/2)$ from  \eqref{eq:lambda} to conclude   the proof.
\end{proof}

 Recall that the remaining step of the proof of Theorem \ref{th:inf}
is an assertion that says that $\log\|u(t)\|_{L^1(\T)} - \inf_{x\in\T}\log u(t\,,x) \lesssim (\log t)^{\kappa}$ for every $\kappa >2\alpha$ when $t\gg1$. Because of \eqref{t_k}, Lemma \ref{lem:v_k:LB} essentially verifies
this property, but only along the time sequence $\{s_k+t_k\}_{k\in\N}$. The following
allows for extension to all large times.

\begin{lemma}\label{lem:v_k:LB2}
	With probability one,
	\[
		\adjustlimits\inf_{r\in[s_k+t_k,s_{k+1}+t_{k+1}]}\inf_{x\in\T}
		u(r\,,x) \ge \frac12 \inf_{x\in\T} u(s_k+t_k\,,x) 
		\quad\text{for all but a finite number of $k\in\N$}.
	\]
\end{lemma}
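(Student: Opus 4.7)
The plan is to apply the Markov property at the deterministic time $\tau_k:=s_k+t_k$, rescale by the random scalar $m_k:=\inf_{x\in\T}u(\tau_k,x)$ (which is almost surely strictly positive by Lemma~\ref{lem:v_k:LB} or \eqref{u>0}), and then invoke Shiga's comparison theorem \cite{Shiga} to replace the (potentially very peaked) initial profile by the constant function $1$. The key fact making this tractable is that the interval length
\[
	\Delta_k:=(s_{k+1}+t_{k+1})-(s_k+t_k)\le t_{k+2}-t_k\le 2(\log k)^{-\alpha}
\]
is critically small. Once the reduction to constant initial data is in place, Lemma~\ref{lem:tp2} combined with Chebyshev's inequality and Borel--Cantelli will close the argument.

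Set $\widetilde v_k(r,x):=u(\tau_k+r,x)/m_k$ for $r\in[0,\Delta_k]$ and $x\in\T$. By the Markov property applied at $\tau_k$, the rescaled field $\widetilde v_k$ solves, conditionally on $\mathcal{F}_{\tau_k}$, an SPDE of the form \eqref{SHE} with $\sigma$ replaced by $\hat\sigma_k(w):=\sigma(m_k w)/m_k$ (which still satisfies $\hat\sigma_k(0)=0$ and $\lip(\hat\sigma_k)=\lip(\sigma)$), driven by a space-time white noise $\dot W_{\tau_k}$ independent of $\mathcal{F}_{\tau_k}$, and with $\widetilde v_k(0,x)\ge 1$ for every $x\in\T$. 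Let $\widetilde w_k$ solve the same SPDE (same $\hat\sigma_k$, same noise $\dot W_{\tau_k}$) but with initial data $\widetilde w_k(0)\equiv 1$. Conditionally on $\mathcal{F}_{\tau_k}$, Shiga's comparison theorem \cite{Shiga} then yields $\widetilde v_k\ge \widetilde w_k$ almost surely, so it suffices to prove
\[
	\sum_{k=1}^{\infty}\P\left\{\sup_{r\in[0,\Delta_k],\,x\in\T}|\widetilde w_k(r,x)-1|>\tfrac12\right\}<\infty.
\]

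Because $\widetilde w_k(0)\equiv 1$, the mild formulation gives $\widetilde w_k(r,x)-1$ equal to a stochastic integral of the form \eqref{I}. Applying the second display of Lemma~\ref{lem:tp2} conditionally on $\mathcal{F}_{\tau_k}$, with $u_0\equiv 1$, $t=\Delta_k$, $\varpi=\lip(\sigma)$, and any $\theta\in(1/6,1/4)$, gives
\[
	\E\left(\sup_{r\in[0,\Delta_k],\,x\in\T}|\widetilde w_k(r,x)-1|^\nu\right)
	\le (c\nu)^{\nu/2}\exp(c\nu^3\Delta_k)\Delta_k^{\nu\theta}
\]
for every $\nu\ge 2$, with $c=c(\theta,\lip(\sigma))>0$ non-random. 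Choosing $\nu=\nu_k:=(\log k)^{\alpha/3}$ makes $\nu_k^3\Delta_k\le 2$, so $\exp(c\nu_k^3\Delta_k)=O(1)$, and Chebyshev's inequality then delivers
\[
	\P\left\{\sup_{r,x}|\widetilde w_k(r,x)-1|>\tfrac12\right\}
	\le \bigl(C'(\log k)^{\alpha/6-\alpha\theta}\bigr)^{\nu_k}
\]
for an absolute constant $C'>0$. Since $\alpha\theta>\alpha/6$, the base tends to $0$ as $k\to\infty$; eventually it is at most $e^{-1}$, so the displayed probability is bounded by $\exp(-(\log k)^{\alpha/3})$. Because $\alpha>5$ gives $\alpha/3>1$, this is summable, and the Borel--Cantelli lemma completes the proof.

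The only genuine obstacle is that the ratio $\|u(\tau_k)\|_{L^\infty}/m_k$ can be exponentially large in $(\log t_k)^{\kappa}$ (by Lemma~\ref{lem:v_k:LB} combined with the peak bound from $\bcA_{k+1,1}$), so a direct application of Lemma~\ref{lem:tp2} to the increment $u(\tau_k+r)-u(\tau_k)$ would be hopelessly weak. The point of the comparison reduction is that the moment bound for $\widetilde w_k$ is governed by $\|\widetilde w_k(0)\|_{L^\infty}=1$ rather than by $\|\widetilde v_k(0)\|_{L^\infty}$; this removes the exponential factor and leaves only the polylogarithmic slack provided by $\Delta_k$.
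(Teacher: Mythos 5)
Your proposal is correct and follows essentially the same route as the paper: apply the Markov property at $s_k+t_k$, divide by the infimum so the rescaled field has initial data $\ge 1$ with a diffusion coefficient of unchanged Lipschitz constant, dominate from below via Shiga's comparison theorem by the solution started at the constant $1$, and then control $\sup_{r,x}|\,\cdot\,-1|$ over the short interval of length $O((\log k)^{-\alpha})$ by a high-moment bound, Chebyshev, and Borel--Cantelli. The only (immaterial) differences are that you invoke Lemma \ref{lem:tp2} directly where the paper rederives the sup-bound from Lemma \ref{lem:tp1} plus Kolmogorov continuity, and your choice $\nu_k=(\log k)^{\alpha/3}$ versus the paper's $\nu=(\log k)^{\alpha\zeta}$ with $\zeta\in(1/\alpha,1/4)$; both yield a summable tail since $\alpha>5$.
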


\begin{proof}
	To simplify the exposition define
	\[
		m(t) = \inf_{x\in\T} u(t\,,x)\qquad[t\ge0].
	\]
	Now consider the following space-time random fields,
	\[
		Y_k(r) = \frac{u(r)}{m(s_k+t_k)}\qquad\text{for all $r\ge s_k+t_k$},
	\]
	one for every $k\in\N$.
	Our goal is to prove that almost surely,
	\begin{equation}\label{Y:goal}
		\adjustlimits
		\inf_{s_k+t_k \le r\le s_{k+1}+t_{k+1}}\inf_{x\in\T} Y_k(r\,,x)
		\ge \frac12\qquad\text{for all but a finite number of $k\in\N$}.
	\end{equation}
	
	Let us condition on $\F_{s_k+t_k}$ and apply the Markov property at time
	$s_k+t_k$ to see that $\{Y_k(r)\}_{r\ge s_k+t_k}$ solves \eqref{SHE}
	with $\sigma$ replaced by $\Sigma_k$ where
	\[
		\Sigma_k (w) = \frac{\sigma\left( m(s_k+t_k)w\right)}{m(s_k+t_k)}\qquad[w\in\R].
	\]
	Since  $\lip(\Sigma_k)=\lip(\sigma)$ and $\Sigma_k$ is $\F_{s_k+t_k}$-measurable,
	the analysis of the resulting SPDE is the same as the original analysis of \eqref{SHE}.
	In particular, the fact that 
	$\inf_{x\in\T} Y_k(0\,,x) =1$  and the comparison theorem for SPDEs
	(see Shiga \cite{Shiga}) together imply that 
	$Y_k \ge \mathcal{Y}_k$ where $\mathcal{Y}_k$ solves the same 
	SPDE as $Y_k$ but started identically at one.
	Consequently, \eqref{Y:goal} will follow as soon as we can show that
	\begin{equation}\label{y:goal}
		\adjustlimits
		\inf_{s_k+t_k \le r\le s_{k+1}+t_{k+1}}\inf_{x\in\T} \mathcal{Y}_k(r\,,x)
		\ge \frac12\qquad\text{for all but a finite number of $k\in\N$}.
	\end{equation}
	To be sure, recall that $\mathcal{Y}_k$ solves
	\[
		\mathcal{Y}_k(\tau\,,x) = 1 + \int_{(s_k+t_k,\tau)\times\T}
		p_{\tau-s}(x\,,z) \Sigma_k(\mathcal{Y}_k(s\,,z))\,W(\d s\,\d z).
	\]
	Apply Lemma \ref{lem:tp1} and the Markov property at time
	$s_k+t_k$ in order to find a number $C=C(\lip(\sigma))>0$ such that
	\begin{align*}
		&\E\left( \left. |\mathcal{Y}_k(\tau\,,x) - \mathcal{Y}_k(\tau',x')|^\nu\ \, \right|\, 
			\F_{s_k+t_k}\right)^{1/\nu}\\
		&\lesssim\sqrt{\nu}\,
			\exp\left( C\nu^2\left[ s_{k+1}+t_{k+1} - (s_k+t_k) \right]\right)
			\left\{|\tau-\tau'|^{1/4}+|x-x'|^{1/2}\right\},
	\end{align*}
	for all $\nu\in[2\,,\infty)$, $\tau,\tau'\in[s_k+t_k\,,s_{k+1}+t_{k+1}]$,
	and $x,x'\in\T$, and where the implied constant is deterministic and independent of
	$(\nu\,,\tau\,,\tau',x,x')$ as stated. We may also observe that, for every $k\in\N$,
	\[
		\left[ s_{k+1}+t_{k+1} - (s_k+t_k) \right]
		\le t_{k+2} - t_k = (\log (k+1))^{-\alpha} +(\log k)^{-\alpha} \leq 2 (\log k)^{-\alpha}. 
	\]
	Therefore, since $\alpha>5$ (see \eqref{alpha}),  
	we can choose and fix some $\zeta\in( \frac{1}{\alpha}\,, \frac{1}{4})$ 
	such that $\alpha \zeta>1$, and then apply a suitable form
	of the Kolmogorov continuity theorem in order to see that
	\begin{align*}
		\E\left( \sup_{\tau\in[s_k+t_k,s_{k+1}+t_{k+1}]}
		\sup_{x\in\T} |\mathcal{Y}_k(\tau\,,x) - 1|^\nu  \right)^{1/\nu} \lesssim\sqrt\nu\,
		\exp\left( \frac{2C\nu^2}{(\log k)^\alpha}\right) (\log k)^{-\alpha\zeta}
	\end{align*}
	valid for all $\nu\in[2\,,\infty)$ and $k\in\N$, and where the implied constant
	is deterministic and depends only on $(\zeta\,,\lip(\sigma))$. We apply the preceding with
	 $\nu=(\log k)^{\alpha\zeta}$  in order to deduce from the above that, for this particular
	choice of $\nu$, there exists $A=A(\zeta\,,\lip(\sigma))>0$ such that
	\begin{align*}
		\E\left( \sup_{\tau\in[s_k+t_k,s_{k+1}+t_{k+1}]}
		\sup_{x\in\T} |\mathcal{Y}_k(\tau\,,x) - 1|^\nu  \right)^{1/\nu} \leq A
		 (\log k)^{-\alpha\zeta/2}, 
	\end{align*}
	uniformly for all $k\in\N$. In particular, we learn from Chebyshev's inequality
	that, for this particular choice of $\nu=(\log k)^{\alpha\zeta}$ with $\alpha \zeta>1$,
	\begin{align*}
		\P\left\{ \inf_{s_k+t_k \le r\le s_{k+1}+t_{k+1}}\inf_{x\in\T} \mathcal{Y}_k(r\,,x)
			< \frac12\right\}
			&\le \P\left\{ \sup_{\tau\in[s_k+t_k,s_{k+1}+t_{k+1}]}
			\sup_{x\in\T} |\mathcal{Y}_k(\tau\,,x) - 1| > \frac12\right\}\\
		&\le (2A)^\nu (\log k)^{-\alpha\nu\zeta/2}
			=\exp\left( \nu \left[
			\log(2A) - \frac{\alpha\zeta}{2}\log\log k\right]\right)\\
		&= o\left( \exp\left( - (\log k)^{\alpha\zeta} \right) \right)\qquad[k\to\infty].
	\end{align*} 
	Therefore, the Borel-Cantelli lemma implies \eqref{y:goal} and hence the lemma.
\end{proof}

We are ready to prove the remaining parts of Theorem \ref{th:inf}
and conclude the paper.

\begin{proof}[Proof of Theorem \ref{th:inf}]
	Lemma \ref{lem:P(A2)} and the Borel--Cantelli lemma together imply that
	with probability one,
	\[
		\sup_{r\in[t_k,t_{k+1}]}  \|u(r)\|_{L^1(\T)}
		\le 2 \|u(t_k)\|_{L^1(\T)}
		\quad\text{for all but a finite number of $k\in\N$}.
	\]
	In particular, we almost surely have
	\begin{equation}\label{PD}
		\sup_{r\in[t_k,t_{k+2}]}  \|u(r)\|_{L^1(\T)}
		\le 8 \|u(t_{k-1})\|_{L^1(\T)}
		\quad\text{for all but a finite number of $k\in\N$}.
	\end{equation}
	Lemmas \ref{lem:v_k:LB} and \ref{lem:v_k:LB2}
	then imply that for every fixed $\kappa>2\alpha$,  a.s., the following holds for
	all but a finite number of $k\in\N$:
	\begin{align*}
		\adjustlimits\inf_{r\in[s_k+t_k,s_{k+1}+t_{k+1}]}
			\inf_{x\in\T} u(r\,,x) &\ge \frac12 \inf_{x\in\T}
			u(s_k+t_k\,,x) \\
		&\ge \frac{1}{2}  \exp\left( -7(\log t_k)^{\kappa}\right) \|u( t_{k-1} )\|_{L^1(\T)} \\
		&\ge \frac{1}{16} \exp\left( -7(\log t_k)^{\kappa}\right) \sup_{r\in[t_k,t_{k+2}]}  \|u(r)\|_{L^1(\T)};
	\end{align*} 
	see \eqref{PD} for the last line.
	Since the interval $[s_k+t_k\,,s_{k+1}+t_{k+1}]$ is a subset of
	$[t_k\,,t_{k+2}]$, and because $t_{k+2}/t_k\to 1$ as $k\to\infty$,
	it follows from the above that a.s.,
	\[
		\inf_{x\in\T} u(t\,,x) \ge (16)^{-1} \exp\left( -7(\log t)^{\kappa}\right) \|u(t)\|_{L^1(\T)} 
		\qquad\text{for all $t>0$ outside a certain compact $t$-set.}
	\]
	On one hand, this proves that, with probability one,
	\[
		\inf_{x\in\T}\log u(t\,,x) \ge -\log 16 - 7(\log t)^{\kappa} + \log\|u(t)\|_{L^1(\T)}
		\qquad\text{as $t\to\infty$}.
	\]
	On the other hand, Lemma \ref{lem:P(A1)} and the Borel-Cantelli lemma
	together imply that, almost surely,
	\[
		\log \|u(t)\|_{L^\infty(\T)} \le (\beta+o(1)) \log \log t + \log \|u(t)\|_{L^1(\T)}
		\qquad\text{as $t\to\infty$}. 
	\]
	Combine to find that, with probability one,
	\[
		\text{\rm Osc}_{_{\T}}(\log u(t)) \le 7(\log t)^{\kappa} + O(\log\log t)
		\quad\text{as $t\to\infty$.}
	\]
	Because $\kappa$ is an arbitrary number in $(10\,,\infty)$ by \eqref{eq:kappa-gt-8},
	this implies that
	\[
		\limsup_{t\to\infty}\frac{\log\text{\rm Osc}_{_{\T}}(\log u(t))}{\log\log t}\le 10\quad
		\text{a.s.,}
	\]
	which is an equivalent formulation of the theorem.
\end{proof}

%%%%%%%%%%%%%%%%%%%%%%%%%%%%%%%%%
%%%%% End of Thm 1.6 proof %%%%%%
%%%%%%%%%%%%%%%%%%%%%%%%%%%%%%%%%

%%%%%%%%%%%%%%%%%%%%%%%%%%%%%%%%%
%%%%%%%%%%% References %%%%%%%%%%
%%%%%%%%%%%%%%%%%%%%%%%%%%%%%%%%%

\begin{spacing}{0.1}
\footnotesize
\noindent {\bf Davar Khoshnevisan.} Department of Mathematics, University of Utah,
	Salt Lake City, UT 84112-0090, USA,
	\texttt{davar@math.utah.edu}\\[.2cm]
\noindent {\bf Kunwoo Kim.} Department of  Mathematics, Pohang University of Science and Technology (POSTECH), Pohang, Gyeongbuk, Korea 37673,
	\texttt{kunwoo@postech.ac.kr}\\[.2cm]
\noindent\textbf{Carl Mueller.} Department of Mathematics, University of Rochester,
	Rochester, NY 14627, USA,\\
	\texttt{carl.e.mueller@rochester.edu}
\end{spacing}
\bigskip

\end{document}